\renewcommand{\eprint}[1]{\href{https://arxiv.org/abs/#1}{arXiv:#1}}
\newcommand{\pageafter}[1]{#1~pp.}
\newtheorem{Theorem}{Theorem}[section]
\newtheorem{Lemma}[Theorem]{Lemma}
\newtheorem{Proposition}[Theorem]{Proposition}
\newtheorem{Corollary}[Theorem]{Corollary}
\newtheorem{Remark}[Theorem]{Remark}
\newtheorem{Claim}[Theorem]{Claim}
\newtheorem{Definition}[Theorem]{Definition}
\newtheorem{Assumption}[Theorem]{Assumption}
\setlist[itemize]{leftmargin=*}
\setlist[enumerate]{leftmargin=*,label=(\roman*),ref=(\roman*)}
\newcommand{\be}{\begin{equation}}
\newcommand{\en}{\end{equation}}
\newcommand{\cA}{\ensuremath{\mathcal A}}
\newcommand{\cB}{\ensuremath{\mathcal B}}
\newcommand{\cC}{\ensuremath{\mathcal C}}
\newcommand{\cD}{\ensuremath{\mathcal D}}
\newcommand{\cE}{\ensuremath{\mathcal E}}
\newcommand{\cG}{\ensuremath{\mathcal G}}
\newcommand{\cH}{\ensuremath{\mathcal H}}
\newcommand{\cL}{\ensuremath{\mathcal L}}
\newcommand{\cM}{\ensuremath{\mathcal M}}
\newcommand{\cN}{\ensuremath{\mathcal N}}
\newcommand{\cP}{\ensuremath{\mathcal P}}
\newcommand{\cS}{\ensuremath{\mathcal S}}
\newcommand{\cT}{\ensuremath{\mathcal T}}
\newcommand{\cV}{\ensuremath{\mathcal V}}
\newcommand{\cW}{\ensuremath{\mathcal W}}
\newcommand{\cX}{\ensuremath{\mathcal X}}
\newcommand{\bbE}{{\ensuremath{\mathbb E}} }
\newcommand{\bbG}{{\ensuremath{\mathbb G}} }
\newcommand{\bbM}{{\ensuremath{\mathbb M}} }
\newcommand{\bbN}{{\ensuremath{\mathbb N}} }
\newcommand{\bbP}{{\ensuremath{\mathbb P}} }
\newcommand{\bbQ}{{\ensuremath{\mathbb Q}} }
\newcommand{\bbR}{{\ensuremath{\mathbb R}} }
\newcommand{\bbS}{{\ensuremath{\mathbb S}} }
\newcommand{\bbW}{{\ensuremath{\mathbb W}} }
\newcommand{\bbX}{{\ensuremath{\mathbb X}} }
\newcommand{\bbY}{{\ensuremath{\mathbb Y}} }
\newcommand{\bbZ}{{\ensuremath{\mathbb Z}} }
\newcommand{\md}{\mathrm{d}}
\newcommand{\1}{\mathbbm{1}}
\DeclareMathOperator{\supp}{supp}
\let\a=\alpha    \let\d=\delta  \let\e=\varepsilon
 \let\g=\gamma     \let\k=\kappa  \let\l=\lambda
      \let\o=\omega      
  \let\s=\sigma \let\t=\tau   
\let\y=\upsilon  \let\z=\zeta
\let\D=\Delta   \let\G=\Gamma  \let\L=\Lambda 
\let\O=\Omega      
\newcommand{\pc}{p_{\mathrm{c}}}
\title{Crossings and diffusion in Poisson driven marked random connection models}
\author[1]{Alessandra Faggionato}
\author[2]{Ivailo Hartarsky}
\affil[1]{\small Department of Mathematics, Sapienza University of Rome. P.le Aldo Moro 2, 00185 Roma, Italy, \texttt{faggiona@mat.uniroma1.it}}
\affil[2]{\small Universite Claude Bernard Lyon 1, CNRS, Centrale Lyon, INSA Lyon, Universit\'e Jean Monnet, ICJ UMR5208, 69622 Villeurbanne, France, \texttt{hartarsky@math.univ-lyon1.fr}}
\date{\today}
\begin{document}

\maketitle

\begin{abstract}
We first study crossing statistics in random connection models (RCM) built on marked Poisson point processes on $\mathbb R^d$. Under general assumptions, we show  exponential tail bounds for the number of crossings of a box contained in the infinite cluster for supercritical intensity of the point process, and  percolation in slabs, in analogy with the Grimmett–-Marstrand theorem. We then present several applications to transport and diffusion phenomena. In particular, we prove  the non-degeneracy of the effective homogenized matrix arising in the large-scale limit of random walks, exclusion processes, and resistor networks on the RCM, and  the non-degeneracy of the effective diffusion constant for one-dimensional diffusion operators on the Euclidean graph associated with the RCM.
 As examples, we apply our results to Poisson--Boolean models and Mott variable range hopping random resistor network, providing a fundamental ingredient used in the derivation of Mott's law.
\end{abstract}

\noindent\textbf{MSC2020:} 60K35; 60K37; 60D05; 82B21
\\
\textbf{Keywords:} Random connection model; 
effective homogenized matrix; Poisson--Boolean model; Mott variable range hopping; percolation

\section{Introduction}
\subsection{Crossings and transport}
Random (possibly weighted)
undirected graphs find numerous modelling applications. A remarkable one is  the analysis of transport in disordered media, modelled e.g.\ by considering  a resistor network, a flow network, a random walk, an interacting particle system, a 1d diffusion, etc.\ on the graph. In all these models edge weights, viewed as conductances, tune the intensity of local flow  along the edges (unweighted graphs can be thought with unit conductances). If transport  takes place at macroscopic scale, the medium represented by the graph is a conductor. Naturally, transport requires the presence of enough crossings (roughly, macroscopic paths) in the graph.

We deal with media which are  microscopically inhomogeneous but spatially homogeneous, from a statistical viewpoint. Thus, we are mainly interested in random weighted graphs embedded in $\bbR^d$ whose law is stationary w.r.t.\ Euclidean translations. Further assuming ergodicity w.r.t.\ Euclidean translations and, possibly, some irreducibility given by the connectivity of the graph, one can often express the large-scale conduction property of the medium by means of the so-called effective homogenized matrix $D$. Several derivations of such stochastic homogenization and  scaling limits rely (among other things) on the crossing properties of the graph,  which also guarantee that $D$ is non degenerate (see e.g.\ \cite{Berger07} and \cite{Mathieu07} based also on \cite{Barlow04}, \cite{Zhikov06}*{Section~7}, \cite{Armstrong18}*{Section 2}, \cite{Armstrong23}*{Section 2}).
Other derivations (with applications to random resistor networks and interacting particle systems) have been developed in \cites{Faggionato25,Faggionato22,Faggionato23a,FC}
to derive qualitative homogenization and scaling limits for a very large class of random graphs and without relying on crossing properties (thus ensuring universality but allowing the resulting $D$ to be degenerate).

Let us say that a random graph has a good crossing statistics if with high probability, for some $\k>0$, the subgraph given by the edges with conductances larger than $\k$ has at least order $\ell^{d-1}$ vertex-disjoint   crossings  of a  box of side length $\ell$ in any  given direction.  Then, roughly, good crossing statistics ensure the non-degeneracy of $D$ 
in the 2-scale convergence approach of  \cites{Zhikov06,Faggionato23a}  (at the basis of  \cites{Faggionato25,Faggionato22,FC}). For the supercritical Bernoulli percolation on $\bbZ^d$ (with unit conductances) the good crossing statistics were first proved by Kesten \cite{Kesten82}*{Chapter~11} and Chayes--Chayes \cite{Chayes86}.  Kesten's proof is largely based on \cite{Grimmett84} (cf.~\cite{Kesten82}*{Remark~(ii), p.~339}). As presented in Section~\ref{sec_prob_bounds} the proof in \cite{Chayes86} can be easily extended  to get  the  same result  for the supercritical Bernoulli percolation cluster itself. In particular, being a connected graph, the supercritical Bernoulli percolation cluster with unit or, more generally, positively lower bounded conductances is an effective graph to model transport in disordered conductors.

\subsection{Main results} In this work we first  aim at proving the above crossing property  for more general random infinite  clusters.  We consider a random connection model (RCM)  on a marked homogeneous Poisson point process (PPP)  with density $\rho$ on $\bbR^d$, with i.i.d.\ marks,  whose connection function $\varphi$, specifying the probability to have an edge between sites $x\not=y$, depends on $y-x$ and on the marks of $x$ and $y$ (see Section~\ref{sec_model_results} for a formal definition). When the connection function is an indicator (as in the Poisson--Boolean model), the insertion of the edge is deterministic and fully determined by the marked PPP.  We point out that the RCM on a marked PPP is also widely used in the context of  scale-free graphs (see e.g.~\cite{Gracar22}).

Under suitable assumptions on $\varphi$ (cf.~Assumptions~\ref{assumere}), we prove the following.  Assume that, for some $\lambda>0$, the RCM has a unique infinite cluster for densities $\rho>\lambda$. Then (cf.~Theorem~\ref{teo_crossings}), for all $\rho>\l$ there exist positive constants $c,c'$ such that  for all $\ell \geq 1$ the unique infinite cluster of the RCM has at least $c \ell^{d-1}$ left-right (LR) vertex-disjoint crossings of the box $\L_\ell:=[-\ell,\ell]^d$ with probability at least $1-e^{-c' \ell^{d-1}}$. Moreover, (cf.~Theorem~
\ref{th:slab}), under the same conditions, a sufficiently thick two-dimensional slab contains an infinite cluster, as in the classical result of Grimmett--Marstrand for Bernoulli percolation \cite{Grimmett90}.  In many applications, it is known that $\lambda$ can be taken equal to the critical density of the model. Moreover, in \cite{Chebunin24}, the uniqueness of the infinite cluster is reduced to checking a simple irreducibility condition. We also point out that Theorem \ref{teo_crossings} extends to a much  larger class the results of \cite{Faggionato21} obtained for RCM on marked PPP  whose connection function is an indicator function satisfying somewhat restrictive monotonicity properties.

By Menger's theorem, the maximal number of vertex-disjoint LR crossings corresponds to the minimal cardinality of vertex cutsets separating the two faces of the  box $\L_\ell$ under consideration (strictly speaking, the two half-strips, see Definition~\ref{def_LR_crossing} below). The supercritical percolation cluster can be thought of as a capacity network where each edge has capacity one  and Theorem~\ref{teo_crossings} could be thought as a first step in investigating  capacity networks built on marked RCMs. For both classical and  more recent results about capacity networks we refer e.g.\ to \cites{Grimmett84,Chayes86,Rossignol18,Zhang18} and references therein.

From Theorem~\ref{teo_crossings}, we derive the non-degeneracy of the effective homogenized matrix $D$ for several transport models built on the infinite cluster of the RCM with positively lower bounded conductances. For these results we refer to Theorem~\ref{teo2} for random resistor networks and to Theorem~\ref{teo3} for 1d diffusions along the edges of the graph. We point out that the matrix $D$ for random resistor networks is the same appearing in the large scale limit of the random walk and (for hydrodynamics and fluctuations) of the symmetric simple exclusion process on the graph, cf.~\cites{Faggionato23a,Faggionato22,FC}. Hence Theorem~\ref{teo2}  naturally completes the analysis in~\cites{Faggionato23a,Faggionato25,Faggionato22,FC} for the RCM on marked PPP. In Section \ref{sec_gen_boolean}, we treat two important examples: the Poisson--Boolean model (cf.~Theorem~\ref{teo_booleano}) and the graph associated to Mott variable range hopping (v.r.h.) with cutoff (cf.~Theorems~\ref{teo_mott_vrh} and \ref{teo_mott_vrh_2}). For the latter, our results on the LR crossings are fundamental  in   \cite{Faggionato23} to complete the proof of the physics Mott's law for the anomalous decay of the conductivity in doped semiconductors and other amorphous solids, in the physically relevant case of  marks with different sign w.r.t.\ the Fermi energy set equal to zero in Section~\ref{sec:Mott:vrh}  (the crossing analysis for a fixed sign was already provided by \cite{Faggionato21}).

\subsection{Proof outline and comments on the strategy}
\label{subsec:outline}
 We conclude with a rough outline of the proof strategy of the crossing result given by Theorem~\ref{teo_crossings}, emphasising some technical novelties and difficulties. We point out that this proof is the most demanding one  and Theorem~\ref{th:slab}  emerges as a simple corollary of an intermediate result in this proof, while Theorems~\ref{teo2},~\ref{teo3},~\ref{teo_booleano},~\ref{teo_mott_vrh},~\ref{teo_mott_vrh_2} follow relatively simply from Theorem~\ref{teo_crossings} and known results, so we direct the reader to the respective full proofs.

Firstly, by a classical argument going back to  \cites{Chayes86,Grimmett84}, it is enough to prove good crossing statistics in a quasi-two-dimensional setting. Then, LR crossings are constructed by studying a suitable growth process (see Section~\ref{sec_tanemura}). It is related to the one introduced in \cite{Tanemura93}*{Section~4} for the blob model (i.e.\ the Poisson--Boolean model with a deterministic radius). However, we were led to extend and adapt Tanemura's process, in order to recover crossings contained in the infinite cluster.
 
Comparison to the growth process is obtained via a new dynamical renormalization scheme. In \cite{Faggionato21}, the RCM was discretized and then a renormalization scheme inspired by the classical Grimmett--Marstrand \cite{Grimmett90} approach for Bernoulli percolation was applied. In the present work, we reason more directly, bypassing discretization and using the `seedless' renormalisation of \cite{Duminil-Copin21}. However, the marks of the RCM induce a disparity between different points of the PPP. This prevents uniformly lower bounding the probability of connecting a generic marked point to infinity (in particular, the validity of condition~(a) in \cite{Duminil-Copin21}*{Theorem~3} is no longer guaranteed).
Therefore, in Section~\ref{sec_connections}, we are led to enhance the argument of \cite{Duminil-Copin21} with a new method in order to avoid `bad' marks.

The above main difficulty of the models under consideration represented by `bad' marks also impacts the dynamical renormalisation performed in Section~\ref{proof_teo_crossings} and illustrated in Figure~\ref{fig:renorm}. Indeed, the necessity to find `good' marks progressively offsets renormalisation, imposing correcting the positioning at each step. Furthermore, preserving sufficient independence throughout the dynamical exploration in the continuum setting also requires some care in devising the renormalization.

Finally, let us comment on an alternative route in light of recent progress. Quantitative proofs of the Grimmett–Marstrand slab result have been obtained for Bernoulli percolation on $\bbZ^d$ \cite{Duminil-Copin21} and on general transitive graphs with polynomial growth \cite{Contreras24} (also see \cite{Easo23} for faster growth), using dynamical and static renormalization, respectively. Our approach follows \cite{Duminil-Copin21}, using dynamical renormalization and an exploration algorithm adapted from Tanemura \cite{Tanemura93} to establish stochastic domination between the explored region and the corresponding region in the supercritical Bernoulli percolation.
Alternatively, one could have tried to extend the methods of \cite{Contreras24} to RCMs. This approach is technically more involved but would yield 
additional information. In this case, the renormalized process would have finite-range dependence and, by the Liggett--Schonmann--Stacey theorem \cite{Liggett97}, 
would stochastically dominate a Bernoulli percolation that can be taken to be supercritical. By controlling local uniqueness events (see   Definition~\ref{def:local:uniqueness}) at the renormalisation scale, disjoint crossings in the renomalized process would lead to  disjoint crossings in the original model.  In particular, under this alternative approach, Tanemura’s construction would not be required. However, the difficulties arising from points with bad marks would remain.

\section{Model and main results}\label{sec_model_results}

\subsection{Random connection model crossings}Let $\bbM$ be a  Polish space (i.e.~a complete separable metric space) and let $\nu$ be a probability measure on $\bbM$ endowed with the $\s$--algebra of Borel subsets. We call $\bbM$  \emph{mark space} and $\nu$ \emph{mark distribution}.

We fix once and for all a dimension $d\geq 2$ and  denote by $\cL$ the Lebesgue measure on $\bbR^d$. 
The product space $\bbR^d\times\bbM$ is a measure space endowed with the  measure  $\cL\otimes \nu$ on the Borel $\s$--algebra. We denote by $\pi: \bbR^d\times\bbM \to \bbR^d$ the canonical projection, i.e.~$\pi(x,m)=x$ for all $(x,m)\in \bbR^d\times\bbM$.

We fix a \emph{connection function}, i.e.\ a function $\varphi: (\bbR^d \times \bbM) ^2 \to [0,1]$
which is symmetric,  i.e.~$\varphi (t,t')=\varphi(t',t)$ for all $t,t'\in \bbR^d\times\bbM$.

We introduce the set $\O:=\{\o \in \bbR^d\times\bbM\,:\, \o\text{ if locally finite}\}$. The  $\s$--field of measurable subsets of $\O$  is the one generated by  $\{\o \in \O\,:\, \sharp (\o \cap A) =n\}$, where $A$ varies among the Borel subsets of $\bbR^d\times\bbM$ and $n$ varies in $\bbN$.   Given $\rho>0$ we denote by $P_{\rho,\nu}$ the  law on $\O$ of the PPP  on $\bbR^d\times \bbM$ with intensity measure $\rho \cL \otimes \nu$. The measure $P_{\rho,\nu}$ coincides also with the law of the marked PPP $\tilde \xi$ on $\bbR^d$ obtained as follows. First take a realization $\xi$ of a  homogeneous PPP on $\bbR^d$ with intensity $\rho$, afterwards mark all points in $\xi$ by i.i.d.\ random variables with value in $\bbM$   and distribution $\nu$ independently from the rest, finally consider the set $\tilde \xi :=\{(x,m_x)\in\bbR^d\times\bbM: x\in \xi\}$ where $m_x$ is the mark of the point $x$. Then  $\tilde \xi$ has law $P_{\rho,\nu}$.

We point out  that $P_{\rho,\nu}$ is concentrated on the set  $\O_*$ given by configurations $\o$ such that  $\hat \o:=\pi(\o)$ is a locally finite subset of $\bbR^d$ and $t=t'$ whenever $t,t'\in \o$ and $\pi(t)=\pi(t')$. A configuration $\o\in \O_*$ can be written as $\{ (x,m_x): x\in \hat \o \}$, $m_x$ denoting the mark of point $x$.  Points in $\hat\o$ can be enumerated in  a measurable way \cite{Last18}*{Proposition~6.2}. 
When referring to the above enumeration we write $\hat\o=\{x_1,x_2,\dots\}$ and we denote by  $m_i$ the  mark  of $x_i$  (i.e.\ $m_i:=m_{x_i}$)

We let $J:=\{(i,j)\in \bbN_+\times\bbN_+\,:\, 1\leq i <j\}$, where $\bbN_+:=\{1,2,\dots\}$, and denote by  $\underline{u}=(u_{i,j})_{(i,j)\in J}$ a generic element on $ [0,1]^J$. We endow $ [0,1]^J$ with the product Lebesgue probability measure  $Q=\bigotimes _{(i,j)\in J} \md x $.

\begin{Definition}[Graph $G_d$] \label{def_G}
Given $(\o,\underline u)\in \O_*\times [0,1]^J$  we denote by $G_d(\o,\underline{u})$ the undirected graph on $\bbR^d$ with vertex set $\hat\o=\{x_1,x_2,\dots\}$ and edges $\{x_i,x_j\}$ where  $1\leq i<j$ and $u_{i,j} \leq \varphi \big( (x_i, m_i) ) , (x_j,m_j) )$.
The graph $G_d(\o,\underline{u})$ defined a.s.\ on the space $\O\times[0,1]^J$ with probability $P_{\rho,\nu}\otimes Q$  is called  Poisson driven marked random connection model (RCM)  with intensity $\rho$,  mark distribution $\nu$ and connection function $\varphi$. We denote this random graph by  \emph{RCM}$(\rho,\nu,\varphi)$.
\end{Definition}

Briefly, the RCM$(\rho,\nu,\varphi)$ is obtained as follows: sample a homogeneous PPP $\xi$ on $\bbR^d$ with intensity $\rho$, then mark  points $x$ in $\xi$  by i.i.d.\ random variables $m_x$ with value  in $\bbM$ and common distribution $\nu$ independently from the rest, then insert an edge between $x\not =y$ in $\xi$ with probability $\varphi \big( (x,m_x), (y,m_y)\big)$ independently for any pair $\{x,y\}$ in $\xi$ and independently from the rest.

The subindex $d$ is used to stress that the graph $G_d$ lives in $\bbR^d$ and not in $\bbR^d\times \bbM$. Although this is natural for applications, in the mathematical analysis in the next sections it will be more natural to work with graphs in $\bbR^d\times \bbM$, for which we reserve the notation $G$. 

Given $\ell>0$, we consider the strip
 $\bbS:=\bbR\times [-\ell,\ell ]^{d-1}$ and we write   it as  the disjoint union 
$ \bbS_{\ell}^-\cup \L_{\ell} \cup \bbS_{\ell}^+$, 
where 
\[\bbS_{\ell}^-:=  \{x\in \bbS_{\ell}\,:\, x_1<  -{\ell} \}\,,\qquad \bbS_{\ell}^+:=  \{x\in \bbS_{\ell}\,:\, x_1> {\ell}\}\,, \qquad
 \L_{\ell}:=[-{\ell},{\ell}]^d\,.\]

\begin{Definition}[LR crossing] \label{def_LR_crossing}  Let $(\o,\underline u)\in \O_*\times [0,1]^J$.
Given $\ell>0$ a left-right (LR) crossing  of  the box $\L_{\ell}$ in the graph  $G_d(\o,\underline u)$    is any  sequence of distinct points $y_1$, $y_2$,$\dots$, $y_n \in \hat \o $  with $n\geq 3$  such that 
\begin{itemize}
\item $\{y_i ,y_{i+1}\}$ is an edge of $G_d(\o,\underline u)$  for all $i=1,2,\dots, n-1$;
\item $y_1 \in \bbS_{\ell}^{-} $ and   $y_n \in \bbS_{\ell}^+ $;
\item $y_2, y_3,\dots, y_{n-1}\in \L_{\ell}$.
\end{itemize}
\end{Definition}

\begin{Assumption}\label{assumere} We assume the following for a given triple $(\l,\nu,\varphi)$ of intensity $\lambda>0$, mark measure $\nu$ and connection function $\varphi$:
\begin{enumerate}
\item\label{ass:stationary} $\varphi$ is  stationary, 
 i.e.~$\varphi\big((x,m), (x',m')\big)= \varphi \big((0,m), (x'-x,m') \big)$ for all $x,x'\in \bbR^d$, $m,m'\in\bbM$;
\item\label{ass:finite:support} $\varphi$ has finite spatial support, i.e.~there exists ${\ell}_*>0$ such that   $\varphi \big((x,m), (x',m')\big)= 0$ whenever  $|x-x'|_\infty \geq {\ell}_*$;
\item\label{ass:symmetry} $\varphi$ fulfils  the following symmetries: $\varphi ( (0, m), (x,m'))= \varphi ((0,m), (y,m'))$ for any $x,y\in \bbR^d$ and $m,m'\in \bbM$ such that $y$ is obtained from $x$ by a permutation of the coordinates of $x$ or $y$ is obtained from $x$ by flipping the sign  of  one coordinate of $x$;
\item\label{ass:irreducible} for all $\rho\geq \l$ the  graph $G_d$ has at most one  unbounded connected component $P_{\rho,\nu}\otimes Q$--a.s.;
\item\label{ass:supercritical} the  graph $G_d$ has an unbounded connected component $P_{\l,\nu}\otimes Q$--a.s.
\end{enumerate}
\end{Assumption}

Let us comment on Assumptions~\ref{assumere}.

As we will see, Assumption~\ref{assumere}\ref{ass:finite:support} can sometimes be relaxed (see e.g.\ Section~\ref{subsec:boolean}), but working under sharp conditions on the tail of $\varphi$ is delicate in general. 
Assumption~\ref{assumere}\ref{ass:finite:support} implies that  
\[\int_{\bbR^d} \md x \int_\bbM \nu(\md m) \int_\bbM\nu(\md m')  \varphi\big( (0,m), (x,m') \big) <\infty\,.\]
In particular, we are in the setting of \cite{Chebunin24}*{Section~4}.

The symmetry Assumption~\ref{assumere}\ref{ass:symmetry} is used only for the so called ``square root trick" in Eq.~\eqref{eq:sqrt:trick} below. It is a bit stronger than what the proof requires. We note that, while working with less symmetry is sometimes possible (see e.g.\ \cite{Bezuidenhout94}), it would complicate proofs significantly, so we prefer to work under this simplifying assumption.

Assumption~\ref{assumere}\ref{ass:irreducible} is known to hold in several models. Chebunin and Last \cite{Chebunin24} (see there for a detailed discussion on previous results) provide a very general criterion implying Assumption~\ref{assumere}\ref{ass:irreducible}. To state it we introduce,  for $n\in\bbN_+$,  the $n$--th convolution of $\varphi$  given by the function $\varphi ^{(n)} : (\bbR^d\times\bbM)^2\to [0,1]$ defined recursively by
\begin{equation}
\label{eq:def:phin}
 \varphi^{(1)} (t_1,t_2):= \varphi (t_1,t_2) \qquad \text{ and }\qquad
\varphi^{(n+1)} (t_1,t_2):= \int \varphi^{(n)} (t_1,t) \varphi(t, t_2) (\l \cL\otimes\nu) (dt) \,.
\end{equation}
Using Assumptions~\ref{assumere}\ref{ass:stationary} and~\ref{assumere}\ref{ass:finite:support} together with  \cite{Chebunin24}*{Remark~5.2 and Theorem~12.1},
Assumption~\ref{assumere}\ref{ass:irreducible} is satisfied if $\varphi$ is irreducible for $P_{\l,\nu}$, i.e. 
\be\label{eq:irreducible:basic}\sup_{n\geq 1} \varphi^{(n)} (t,t')>0 
\text{ for 
$(\lambda\cL\otimes\nu)^{\otimes 2}$--a.a.~$(t,t')\in (\bbR^d\times\bbM)^2$}\,.
\en
By \cite{Chebunin24}*{Remark~5.2}, in \eqref{eq:def:phin} and \eqref{eq:irreducible:basic}, we can drop $\l$, dealing just with $\cL\otimes\nu$.
By \cite{Chebunin24}*{Proposition~5.1} the above criterion \eqref{eq:irreducible:basic} has the following equivalent formulation:
\begin{equation}
\label{eq:irreducible}\bbP\big( t\text{ is connected to }t' \text{ in } G(\xi\cup \{t,t'\}) \big)>0  \text{ for $(\lambda\cL\otimes\nu)^{\otimes 2}$--a.a.\ $(t,t')\in (\bbR^d\times\bbM)^2$}\,,
\end{equation}
where $\xi$ is a PPP on  $\bbR^d\times \bbM$ with 
law $P_{\l,\nu}$ and
$ G(\xi\cup \{t,t'\}) $
is the  random graph with vertex set  $\xi\cup \{t,t'\}$  obtained by putting an edge between two distinct vertices $t_1,t_2$ with probability $\varphi(t_1,t_2)$, independently for any pair of vertices and from the rest (we will come back to the RCM  $G$ in Section~\ref{sec_preliminaries}). 
By \cite{Chebunin24}*{Proposition~5.5} any unmarked (i.e.\ with Dirac $\nu$) RCM satisfies Assumption~\ref{assumere}\ref{ass:irreducible} for any $\l$.

Finally, we observe that  Assumptions~\ref{assumere}\ref{ass:irreducible} and~\ref{assumere}\ref{ass:supercritical} together with stochastic domination imply  for any $\rho\geq \l$ that the graph $G_d$ has a unique unbounded connected component (i.e.~infinite cluster) $P_{\rho,\nu}$--a.s.

\smallskip

We can now state our first main result.
\begin{Theorem}[Lower bounds on LR crossings] \label{teo_crossings} Suppose the triple $(\l,\nu,\varphi)$ satisfies  Assumptions~\ref{assumere}. Then for any $\rho>\l$ there exist $c_1,c_2>0$ such that, for all $\ell$ large enough, we have
\be\label{ananas}
P_{\rho,\nu}\otimes Q \left( \cN_{\ell}   \geq  c_1 {\ell}^{d-1}\right)\geq 1- \exp\left(- c_2 \,{\ell} ^{d-1}\right)\,,
\en
where $\cN_{\ell}$ denotes the maximal number of vertex-disjoint LR crossings of $\L_{\ell}$ included in the unique  infinite cluster of $G_d$.
\end{Theorem}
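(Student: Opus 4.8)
The plan is to follow the three-stage route sketched in the introduction. By the classical argument going back to \cite{Chayes86}, it suffices to prove good crossing statistics in a slab of fixed thickness: one fixes a large ``Grimmett--Marstrand width'' $K=K(\lambda,\nu,\varphi)$ and shows that, for some $c,c'>0$ and all $\ell\ge1$, the quasi-two-dimensional slab $\bbR\times\bbR\times[0,K]^{d-2}$ contains at least $c\ell$ vertex-disjoint LR crossings of $\Lambda_\ell$, all lying in the $d$-dimensional infinite cluster of $G_d$, with probability at least $1-e^{-c'\ell}$. Granting this, I would tile the remaining $d-2$ coordinates by $\Theta(\ell^{d-2})$ translated slabs, keep only those whose supports are at mutual distance (in $|\cdot|_\infty$) larger than $\ell_*$ so that, by Assumption~\ref{assumere}\ref{ass:finite:support}, they are independent and still $\Theta(\ell^{d-2})$ in number, and apply a Chernoff bound: with probability $1-e^{-\Theta(\ell^{d-1})}$ a constant fraction of the slabs are successful, together producing $\Theta(\ell^{d-2})\cdot\Theta(\ell)=\Theta(\ell^{d-1})$ vertex-disjoint crossings. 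Note that the exponential rate $e^{-c'\ell}$ in a single slab is exactly what is promoted to the rate $e^{-c_2\ell^{d-1}}$ in \eqref{ananas}. The slab crossings lie in the $d$-dimensional infinite cluster by uniqueness, Assumption~\ref{assumere}\ref{ass:irreducible}; here, as well as in Stage~3, one uses the surplus density $\rho-\lambda>0$, both to make the width-$K$ slab supercritical and to connect the slab clusters to the ambient infinite cluster by sprinkling.

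\textbf{Crossings from an adapted growth process.} Inside the slab I would construct the crossings dynamically, adapting Tanemura's growth process \cite{Tanemura93}*{Section~4}. Starting from a seed near $\{x_1=-\ell\}$, the process sequentially reveals the marked PPP and the connection variables $\underline u$ in a region drifting in the $+e_1$ direction, always maintaining a connected explored subgraph and an active frontier; the crossing is produced once the explored cluster reaches $\{x_1>\ell\}$. Running several such explorations at transversally separated positions, together with a duality/min-cut bound in the slab, yields the claimed number of \emph{disjoint} crossings with the stated exponential tail. What remains is to show that the growth process advances with positive drift and with the desired large-deviation rate, and that its output can be pinned to the infinite cluster — the latter being where sprinkling enters.

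\textbf{A dynamical seedless renormalisation avoiding bad marks.} The drift estimate is the substance, and I would prove it by a renormalisation argument run directly in the continuum — bypassing the discretisation of \cite{Faggionato21} — using the seedless scheme of \cite{Duminil-Copin21}, coupled to the growth process. The obstruction is that condition~(a) of \cite{Duminil-Copin21}*{Theorem~3} requires a uniform lower bound on the probability that a generic point connects far away, which fails since a point carrying a ``bad'' mark may be poorly connected. The remedy is to work, at each renormalisation scale, with points carrying a ``good'' mark — one whose conditional connection probability is not too small; by irreducibility \eqref{eq:irreducible:basic}, good marks carry positive $\nu$-mass, so a good-marked point can always be found within a bounded window — and to run the scheme on that sub-process, using the square-root trick \eqref{eq:sqrt:trick} (which needs the symmetry Assumption~\ref{assumere}\ref{ass:symmetry}) to convert a ``crossing in some direction'' bound into a ``crossing in the prescribed direction'' bound, and sprinkling to lift the renormalised bond probabilities above the percolation threshold. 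The cost is that recentring on good marks offsets the renormalised boxes at every scale, so one must track this cumulative shift and show it stays controlled, so that the macroscopic crossing survives.

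\textbf{Main obstacle.} The crux — and the source of the ``new ideas'' — is to make the seedless renormalisation compatible with a \emph{dynamical, continuum} exploration that keeps hunting for good marks. In the discrete Grimmett--Marstrand world \cite{Grimmett90} one reveals one box at a time with fresh randomness; here the exploration reveals an expanding region, so the order of the revelations and the geometry of the renormalised boxes must be engineered so that each renormalisation step consumes only previously unexplored Poisson points and connection variables, while the progressive recentring remains bounded. Setting this up — the adapted Tanemura process exploring within the future infinite cluster, together with a per-scale correction absorbing the offset — is the delicate technical heart; by comparison, the Chayes reduction, the Chernoff concentration, the square-root trick, and the sprinkling are routine.
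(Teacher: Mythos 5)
Your outline tracks the paper's strategy (Chayes-type reduction to a quasi-two-dimensional slab, an adapted Tanemura exploration, a single-scale seedless renormalisation with sprinkling reserved step by step), but it has a concrete gap in the treatment of bad marks. You assert that, ``by irreducibility \eqref{eq:irreducible:basic}, good marks carry positive $\nu$-mass'' and that this suffices to run the scheme on the good-marked sub-process. Positive mass is not enough. The renormalisation must restart from points \emph{on the already-constructed cluster}: what is actually needed is that every path of macroscopic diameter in the cluster contains a seed, i.e.\ order $1/\varepsilon^2$ well-separated good-marked vertices (Lemma~\ref{lem:seed:to:seed}). This holds only if $\nu(\bbM_g^c)$ can be made \emph{arbitrarily small} (so that chains of bad-marked points are subcritical and die out by a branching-process comparison); if bad marks merely have mass bounded away from $1$, a long path could consist almost entirely of bad-marked vertices, and locating a good-marked Poisson point ``within a bounded window'' is useless because nothing guarantees you can connect the explored cluster to it with probability bounded below. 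Making $\nu(\bbM_g)$ close to $1$ requires the preliminary reduction of Section~\ref{subsec:free:ass}: one discards all marks $m$ for which $(0,m)$ a.s.\ fails to reach infinity, shows via Campbell/Mecke-type identities that this does not change the infinite cluster, and then re-verifies Assumptions~\ref{assumere}\ref{ass:irreducible} and \ref{ass:supercritical} for the conditioned mark law $\nu(\cdot\,|\,A^c)$ and reduced intensity — after which \eqref{eq:supercritical} holds and $\nu(\bbM_g)>1-c$ follows from \eqref{set_good_marks} by choosing $\varepsilon$ small. Your proposal skips this reduction entirely, and without it the seed-finding step, and hence the whole dynamical renormalisation, does not close.

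Two smaller remarks. First, your phrase ``recentring on good marks offsets the renormalised boxes at every scale'' suggests a multi-scale scheme with per-scale corrections; the paper's renormalisation is single-scale (one coarse-graining to a $\bbZ^2$ site process dominating supercritical Bernoulli percolation), and the recentring issue is resolved \emph{within} one renormalised step by a bounded number (of order $d$) of sprinkled growth steps that steer the current seed back into a prescribed window, each consuming a fresh independent sprinkling process so that Lemma~\ref{lemma_udine} applies. Second, the proposal stops at the level of the paper's own outline: the quantitative inputs — the growth lemma from a seed to a quarter-face, the sprinkling inequality, and the domination of the exploration by supercritical site percolation — are named but not proved, so as written this is a plan rather than a proof.
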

The proof of Theorem \ref{teo_crossings} is given in Section~\ref{proof_teo_crossings},  where we mainly combined and applied what developed in  Sections~\ref{sec_preliminaries}, \ref{sec_connections} and  \ref{sec_tanemura}.

\begin{Remark}\label{docile}
We call length of a LR crossing the number of its edges. By a large deviation estimate for  Poisson random variables, for $\ell\geq 1$ the event  $\sharp(\hat\o\cap\L_{\ell})\leq 2\rho (2\ell)^d$ has $P_{\rho,\nu}$--probability at least $1- e^{-c {\ell}^d}$ for some $c>0$. When the above event takes place,  there can be at most $2\rho (2{\ell})^d/\left(C {\ell}-1\right)$ vertex-disjoint LR crossings of $\L_{\ell}$ with length at least $C {\ell}$, for $C>1$. By taking $C$ large to assure $2^{d+1}\rho/C<c_1/3$ with $c_1$ as in Theorem~\ref{teo_crossings}, at cost to update $c_1$ and $c_2$ in \eqref{ananas}, we get that Theorem~\ref{teo_crossings} remains valid if we define $\cN_{\ell}$ as  the maximal number of vertex-disjoint LR crossings of $\L_{\ell}$  with length upper bounded by $C{\ell}$ and  included in the unique  infinite cluster of $G_d$. We stress that, by Assumption~\ref{assumere}\ref{ass:finite:support}, any LR crossing has length at least $2{\ell}/{\ell}_*$.
\end{Remark}

The proof of Theorem~\ref{teo_crossings} has the following notable byproduct (which is an immediate consequence of the Borel-Cantelli lemma and Eq.~\eqref{2d_bound} in Section~\ref{proof_teo_crossings}).
\begin{Theorem}[Percolation in slabs]
    \label{th:slab} Suppose the triple $(\lambda,\nu,\varphi)$ satisfies Assumptions~\ref{assumere}. Then for any $\rho>\lambda$ there exists $L>0$ such that the slab $\Sigma_L=\bbR^2\times[-L,L]^{d-2}$ satisfies
    \[P_{\rho,\nu}\otimes Q(\Sigma_L\text{ contains an unbounded path of $G_d$})=1.\]
\end{Theorem}

Stating Theorem~\ref{th:slab} explicitly in the present version was prompted by the posterior but independent work of Penrose \cite{Penrose25}, where Theorem~\ref{th:slab} is proved in the particular case $\varphi:((x,m),(x',m'))\mapsto\phi(|x-x'|_p)$ for some $p\in[1,\infty]$ and nonincreasing $\phi:[0,\infty)\to[0,1]$ with finite support.\footnote{The version in \cite{Penrose25} is slightly different, stating that,  for a  thick enough slab, with positive probability, the origin (added to the PPP with density $\rho$ and marked independently with law $\nu$) has an infinite cluster in the RCM. By using Campbell's identity (cf.\ \cite{Daley88}), one can easily show the equivalence of the two formulations.} The proof of \cite{Penrose25} employs the original Grimmett--Marstrand approach, contrary to the present work.
Previously, Last, Penrose and Zuyev \cite{Last17a} proved Theorem~\ref{th:slab} for the Poisson--Boolean model with bounded random radii, that is $\bbM=[0,1]$ and $\varphi((x,m),(x',m'))=\1_{|x-x'|_2\le m+m'}$. The result was also proved by Dembin and Tassion \cite{DembinToappear} for unbounded radii under minimal moment assumptions (cf.\ Section~\ref{subsec:boolean}).

\subsection{Transport via resistor networks}\label{trans_RN}
We now move to transport in the infinite cluster of our RCM. 
A fundamental modelling for transport is provided by resistor networks. The  directional  conductivity measures  the electrical transport  under the effect of a constant  (in space and time) electric field.
Let us first recall  these concepts (for a more details the reader can see~\cite{Faggionato25}).

Consider a weighted undirected graph (also called \emph{conductance model}) $\cG:=(\cV,\cE,c)$, where $\cV$ and $\cE$ denote as usual the vertex set and the edge set, respectively, while the function  $c:\cE\to[0,\infty)$ assigns to each edge a non-negative weight. We take  $\cE\subset\{ \{x,y\}\,:\,x\not=y\text{ in }\cV\}$ and  $\cV\in \cN(\bbR^d)$, $\cN(\bbR^d)$ being the space of locally finite subsets of $\bbR^d$.

\begin{Definition}[Resistor network ${\rm RN}_{\ell}$] Given $\ell>0$  we associate to $\cG$ the resistor network ${\rm RN}_{\ell}$  with nodes the points in $\cV\cap S_{\ell}$ and with electric filaments  given by the edges  $\{x,y\}\in \cE$  included in the strip $S_{\ell}$ and intersecting $\L_{\ell}$, the edge $\{x,y\}$ having conductance $c_{x,y}:=c(\{x,y\})$.
\end{Definition} 

Having the resistor network $\text{RN}_{\ell}$, we consider the electrical potential $v_{\ell}:\cV\cap S_{\ell}\to\bbR$ with value $1$ on $S_{\ell}^-$ and $0$ on $S_{\ell}^+$.
The potential $v_{\ell}$ is related to the current $\iota (x,y)$ that flows along the electric filament from $x$ to $y$ by the identity $\iota(x,y)=c_{x,y}( v_{\ell}(x) -v_{\ell}(y))$.

\begin{Definition}[Directional conductivity $\s_{\ell}$] Given ${\ell}>0$ we associate to $\cG$  the directional conductivity $\s_{\ell}$   defined as the effective conductivity of the resistor network  ${\rm RN}_{\ell}$ along the first direction $e_1$ under the above  electric potential $v_{\ell}$, i.e.\ $ \s_{\ell}: =  \sum _{x\in \cV\cap S_{\ell}^-}
   \; \sum_{\substack{y \in  \cV \cap \L_{\ell}:\\y\sim x}} \iota _{x,y}$.
\end{Definition}

We  recall  Rayleigh’s monotonicity law for resistor networks  (see e.g.~\cite{Doyle84}):
if $(\cV,\cE)$ contains the graph $(\cV',\cE')$ and the conductance functions satisfy $c_{x,y}\geq c'_{x,y}$ for all $\{x,y\}\in\cE'$, then $\s_{\ell} (\cV,\cE,c)\geq\s_{\ell}(\cV',\cE',c')$

As a crucial application of the above monotonicity, explaining also the link with LR crossings, we have the following. We include the standard proof for completeness (see e.g.\ \cites{Kesten82,Chayes86,Faggionato23a}).
\begin{Lemma}[Lower bound on $\s_{\ell}$] \label{lemma_LB}  
Let $\ell\geq 1$.  Suppose that the weighted undirected graph $\cG=(\cV,\cE,1)$ contains $N$ vertex-disjoint LR crossings\footnote{The definition of LR crossings for a generic graph is similar to  Definition~\ref{def_LR_crossing}.} of $\L_{\ell}$.   Then 
$\s_{\ell}  \geq  \frac{N^2}{2\sharp(\cV\cap \L_{\ell})}$.  
\end{Lemma}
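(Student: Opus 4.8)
The plan is to use the variational (Dirichlet principle) characterization of the effective conductivity together with Rayleigh's monotonicity law in the form stated just above the lemma. Recall that for a resistor network with unit conductances, the effective conductivity $\s_\ell$ equals the minimum of the Dirichlet energy $\sum_{\{x,y\}\in\cE}(v(x)-v(y))^2$ over all potentials $v$ agreeing with the prescribed boundary values ($1$ on $\cV\cap\bbS_\ell^-$, $0$ on $\cV\cap\bbS_\ell^+$); equivalently, by duality, $\s_\ell$ is the maximum of a suitable flow functional. Since we want a \emph{lower} bound on $\s_\ell$, the cleanest route is the flow/transport formulation: the effective conductivity is at least the ``energy'' of any admissible unit-type flow, or more precisely $\s_\ell\ge (\text{total flow})^2/(\text{energy of the flow})$ by Thomson's principle applied to a normalized flow.

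First I would reduce to the subgraph consisting exactly of the $N$ vertex-disjoint LR crossings. By Rayleigh monotonicity (deleting all other edges, equivalently setting their conductances to $0$), $\s_\ell$ for $\cG$ is at least $\s_\ell$ for this subgraph $\cG'$, so it suffices to bound the latter from below. Now $\cG'$ is a disjoint union of $N$ simple paths, each running from $\bbS_\ell^-$ to $\bbS_\ell^+$ through $\L_\ell$. On the $k$-th path, send a current $i_k$ from left to right; since a path of $m$ unit-conductance edges in series has resistance $m$, the contribution of that path to the total energy is $i_k^2 m_k$ where $m_k$ is the number of edges of the $k$-th crossing, and the total current injected is $\sum_k i_k$. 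The optimal choice (Lagrange multipliers, or just recognizing parallel resistors) is $i_k\propto 1/m_k$, but for our purposes it is enough to take $i_k$ equal for all $k$ (or optimize): with equal currents $i_k=1$, the total current is $N$ and the total energy is $\sum_k m_k$. Thomson's principle then gives $\s_{\ell}(\cG')\ge N^2/\sum_k m_k$.

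To finish, I need $\sum_k m_k\le 2\,\sharp(\cV\cap\L_\ell)$. Here I use that the crossings are vertex-disjoint: each interior vertex of each crossing lies in $\L_\ell$ (by the definition of LR crossing, $y_2,\dots,y_{n-1}\in\L_\ell$), and these interior vertices are all distinct across the $N$ crossings, so the total number of interior vertices over all crossings is at most $\sharp(\cV\cap\L_\ell)$. Since a path with $m_k$ edges has $m_k-1$ interior vertices (the two endpoints lie in the strips $\bbS_\ell^\pm$ outside $\L_\ell$), we get $\sum_k (m_k-1)\le \sharp(\cV\cap\L_\ell)$, hence $\sum_k m_k\le \sharp(\cV\cap\L_\ell)+N\le 2\,\sharp(\cV\cap\L_\ell)$, using $N\le\sharp(\cV\cap\L_\ell)$ (each crossing has at least one interior vertex since $n\ge 3$). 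Combining, $\s_\ell\ge \s_\ell(\cG')\ge N^2/\sum_k m_k\ge N^2/(2\sharp(\cV\cap\L_\ell))$.

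The only mildly delicate point is making the flow/variational lower bound fully rigorous in the precise normalization used in the paper's definition of $\s_\ell$ (which fixes the \emph{potential} to be $1$ and $0$ on the two strips rather than the total current): one must check that with the boundary \emph{potentials} fixed, the actual current flowing is exactly $\s_\ell$, and that $\s_\ell$ dominates $(\text{flux of any admissible flow})^2/(\text{energy of that flow})$ — this is standard and amounts to Thomson's principle plus the identity $\s_\ell = \min\{\text{energy of }v : v|_{\bbS_\ell^-}=1,\ v|_{\bbS_\ell^+}=0\}$, equivalently $1/\s_\ell = \min\{\text{energy of unit flow from }\bbS_\ell^-\text{ to }\bbS_\ell^+\}$. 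With that in hand the rest is the elementary series/parallel computation above. I expect no substantive obstacle; the proof is short and the ``completeness'' remark in the statement signals that the authors view it as routine.
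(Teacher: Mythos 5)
Your proposal is correct and follows essentially the same route as the paper: reduce to the $N$ disjoint crossings by Rayleigh monotonicity, bound the series/parallel conductivity from below by $N^2/\sum_k l_k$ (the paper computes $\sum_k 1/l_k$ exactly and applies Jensen, which is equivalent to your uniform test flow in Thomson's principle), and then bound $\sum_k l_k\le N+\sharp(\cV\cap\L_\ell)\le 2\sharp(\cV\cap\L_\ell)$ by the same vertex-disjointness count.
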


\begin{proof}
The effective resistance of the $i$-th LR crossing given by $l_i$ filaments in series equals $l_i$ and therefore its effective conductivity is $1/l_i$. Since the vertex-disjoint LR crossings behave as electric chains in parallel,  their total  effective conductivity is  $\sum_{i=1}^{N} 1/l_i$. By Jensen's inequality $\sum_{i=1}^{N}\frac{1}{l_i}\geq  \frac{N^2}{\sum_{i=1}^{N}l_i}$, while the  last denominator is bounded by $N+ \sharp(\cV\cap\L_{\ell})\leq 2 \sharp(\cV\cap \L_{\ell})$. Finally,  Rayleigh's monotonicity principle allows to conclude.
\end{proof}

\medskip
We can finally present our second main result, concerning transport in the infinite cluster of our RCM. 
More precisely, we suppose that the triple $(\l,\nu,\varphi)$ satisfies Assumptions~\ref{assumere}.  Then, given  $\rho\geq\l$, the graph $G_d(\o,\underline u)$ has a unique infinite cluster $\cC(\o,\underline u)$ for  $P_{\rho,\nu}\otimes Q$--a.a.~$(\o,\underline u)$. 
We are interested in transport in the weighted undirected  graph
$\left(\cC(\o,\underline u), \cE(\o,\underline u), 1\right)$, 
where the set $\cE(\o,\underline u)$ is given by  the edges in $G_d(\o,\underline u)$ between two points in the infinite cluster $\cC(\o,\underline u)$.

The large scale homogenization of the medium is encoded in the so-called  \emph{effective homogenized matrix} $D(\rho)$. 
The definition of $D(\rho)$ follows from the general setting presented in \cite{Faggionato25} (for the reader's convenience we recall the definition  in Appendix~\ref{app_fine}).
 Here we just mention that $D(\rho)$ is a non--negative definite symmetric $d\times d$--matrix and that, 
thanks to Assumption~\ref{assumere}\ref{ass:symmetry},   $D(\rho)$ is proportional to the identity, namely
\be\label{eq_diagonale}
D(\rho)=\k(\rho)\mathbb{I}\,.
\en

We recall  that $D(\rho)$ describes  the scaling limit of the direction conductivity  of the infinite cluster with unit conductances.  Indeed we can apply  \cite{Faggionato25}*{Corollary~2.7} to this specific case  (see Appendix~\ref{app_fine} for some comments) to obtain the  following.
 
 \begin{Proposition} [\cite{Faggionato25}*{Corollary~2.7}] \label{prop_scaling} 
Assume the triple $(\l,\nu,\varphi)$ satisfies Assumptions~\ref{assumere} and take $\rho\geq \l$. Then 
  we have
 \be
 \lim_{\ell \to +\infty}{(2\ell)}^{2-d}\s_{\ell}(\o, \underline u)=\rho\, \k(\rho)\qquad P_{\rho,\nu}\otimes Q\text{--a.s.}
 \en
 where $\s_{\ell}(\o,\underline u)$ refers to the weighted graph $\left(\cC(\o,\underline u), \cE(\o,\underline u), 1\right)$.
 \end{Proposition}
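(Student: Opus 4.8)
The plan is to deduce Proposition~\ref{prop_scaling} directly from the general qualitative homogenization result \cite{Faggionato25}*{Corollary~2.7}; the proof therefore reduces to checking that the weighted graph $\big(\cC(\o,\underline u),\cE(\o,\underline u),1\big)$ belongs to the class of random conductance models treated there, and then to identifying the homogenized matrix. First I would record the stationarity and ergodicity input: the RCM $G_d$ admits a realisation as a translation-covariant factor of the marked PPP with law $P_{\rho,\nu}$ --- this is the standard way stationarity and ergodicity of $G_d$ under $\bbR^d$-translations are established (see e.g.\ the set-up of \cite{Chebunin24}) --- so that the Palm calculus underlying \cite{Faggionato25} is available. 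By Assumptions~\ref{assumere}\ref{ass:irreducible} and~\ref{assumere}\ref{ass:supercritical} together with stochastic domination, for $\rho\ge\l$ the graph $G_d$ has a unique infinite cluster $P_{\rho,\nu}\otimes Q$-a.s., and consequently the environment seen from a typical point of $\cC$ (the Palm measure conditioned on $0\in\cC$) is ergodic --- the structural hypothesis of \cite{Faggionato25}*{Corollary~2.7}.

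Next I would verify the integrability requirements. The conductances are identically $1$, hence uniformly bounded; by Assumption~\ref{assumere}\ref{ass:finite:support} every edge $\{x,y\}\in\cE$ satisfies $|x-y|_\infty<{\ell}_*$, so all moments of edge lengths are finite, and the degree of a point is stochastically dominated by the number of points of $\hat\o$ in a box of side $2{\ell}_*$ --- a Poisson random variable, hence with finite exponential moments. Therefore all moment conditions of the type $\mathbb{E}_{\fm}\big[\sum_{y\sim 0}c_{0,y}\,|y|^2\big]<\infty$ (and the reinforced variants) required in \cite{Faggionato25} hold trivially, $\fm$ denoting the Palm measure attached to $P_{\rho,\nu}$. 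With all hypotheses in force, \cite{Faggionato25}*{Corollary~2.7} applies verbatim and yields the $P_{\rho,\nu}\otimes Q$-a.s.\ limit $(2\ell)^{2-d}\s_\ell(\o,\underline u)\to\rho\,\k(\rho)$, the factor $\rho$ entering through the Palm inversion formula and $\k(\rho)$ being read off from $D(\rho)$ as recalled in Appendix~\ref{app_fine}.

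Finally I would justify the identity $D(\rho)=\k(\rho)\mathbb{I}$ of~\eqref{eq_diagonale}. Each coordinate permutation and each single-coordinate sign flip is an orthogonal linear map of $\bbR^d$ under which, by Assumption~\ref{assumere}\ref{ass:symmetry} and stationarity (Assumption~\ref{assumere}\ref{ass:stationary}), the law of $G_d$ is invariant; since $D(\rho)$ is a symmetric matrix canonically associated to that law, it commutes with all these maps, and the only matrices commuting with the full group of coordinate symmetries of $\bbR^d$ are the scalar multiples of $\mathbb{I}$.

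The step I expect to require most care is not any single estimate but the precise matching of \cite{Faggionato25}*{Corollary~2.7} with the present finite-volume object: one must check that the resistor network $\mathrm{RN}_\ell$ on the strip $S_\ell$ with the two-sided Dirichlet datum $v_\ell$ is exactly the domain-with-boundary-conditions situation governed by the corollary, and that the normalisation $(2\ell)^{2-d}$ and the prefactor $\rho$ match the conventions there --- precisely the content of the ``comments'' relegated to Appendix~\ref{app_fine}. I stress that Theorem~\ref{teo_crossings} plays no role in this step: the present convergence is purely qualitative and holds irrespective of crossing properties, the crossings entering only later to guarantee that the limiting constant $\k(\rho)$ is strictly positive.
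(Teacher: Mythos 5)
Your proposal is correct and follows essentially the same route as the paper: the paper's own justification (deferred to Appendix~\ref{app_fine}) likewise consists of fitting the unit-conductance infinite cluster into the framework of \cite{Faggionato25} --- verifying stationarity, ergodicity and the moment conditions (which are immediate from Assumption~\ref{assumere}\ref{ass:finite:support} and Poisson domination of degrees) --- then invoking \cite{Faggionato25}*{Corollary~2.7} and using Assumption~\ref{assumere}\ref{ass:symmetry} to conclude that $D(\rho)$ is scalar. The only presentational difference is that the paper carries out the verification on the space $\cN(\bbW)$ via the pushforward measure $\cP_\rho$ rather than as a factor of the marked PPP, which does not change the substance.
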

 On the other hand,
the results in \cite{Faggionato25}  do not imply that $\k(\rho)>0$.
  As a combination of Theorem~\ref{teo_crossings},   Lemma~\ref{lemma_LB}  and Proposition~\ref{prop_scaling} it is now immediate to  get our second main result.
 \begin{Theorem}[Non-degeneracy of $D(\rho)$, $\k(\rho)$] \label{teo2} Suppose that the triple $(\l,\nu,\varphi)$ satisfies  Assumptions~\ref{assumere}. Then for any $\rho>\l$  the  effective homogenized matrix $D(\rho)$ is strictly positive definite, i.e.~$\k(\rho)>0$.
 \end{Theorem}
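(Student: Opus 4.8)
The plan is to assemble the three ingredients singled out just before the statement --- Theorem~\ref{teo_crossings}, Lemma~\ref{lemma_LB} and Proposition~\ref{prop_scaling} --- into a quantitative, scale-uniform lower bound on the directional conductivity that persists in the limit $\ell\to\infty$. Fix $\rho>\l$ and let $c_1,c_2>0$ be the constants furnished by Theorem~\ref{teo_crossings}. First I would introduce, for $\ell\ge 1$, the event $A_\ell$ on which simultaneously (a) the unique infinite cluster $\cC(\o,\underline u)$ of $G_d$ contains at least $c_1\ell^{d-1}$ vertex-disjoint LR crossings of $\L_\ell$, and (b) $\sharp(\hat\o\cap\L_\ell)\le 2\rho(2\ell)^d$. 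By Theorem~\ref{teo_crossings} the first event has probability at least $1-e^{-c_2\ell^{d-1}}$, and by Remark~\ref{docile} the second has probability at least $1-e^{-c\ell^d}$ for some $c>0$; hence $P_{\rho,\nu}\otimes Q(A_\ell)\ge 1-e^{-c_2\ell^{d-1}}-e^{-c\ell^d}$.

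Next, on $A_\ell$ I would apply Lemma~\ref{lemma_LB} to the weighted graph $(\cC(\o,\underline u),\cE(\o,\underline u),1)$, using $\sharp(\cC(\o,\underline u)\cap\L_\ell)\le\sharp(\hat\o\cap\L_\ell)\le 2\rho(2\ell)^d$ from (b) and the number $N\ge c_1\ell^{d-1}$ of vertex-disjoint LR crossings from (a). This yields, on $A_\ell$,
\[
\s_\ell\ \ge\ \frac{(c_1\ell^{d-1})^2}{2\cdot 2\rho(2\ell)^d}\ =\ \frac{c_1^2}{2^{d+2}\rho}\,\ell^{d-2},
\qquad\text{hence}\qquad
(2\ell)^{2-d}\s_\ell\ \ge\ \frac{c_1^2}{2^{2d}\rho}\ =:\ \e_0\ >\ 0,
\]
a bound uniform in $\ell\ge 1$, where $\s_\ell$ refers to the same weighted graph $(\cC(\o,\underline u),\cE(\o,\underline u),1)$.

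Finally I would pass to the limit via Proposition~\ref{prop_scaling}, which gives $(2\ell)^{2-d}\s_\ell\to\rho\,\k(\rho)$ $P_{\rho,\nu}\otimes Q$--almost surely. Since $\sum_{\ell\in\bbN_+}\bigl(e^{-c_2\ell^{d-1}}+e^{-c\ell^d}\bigr)<\infty$, the Borel--Cantelli lemma shows that almost surely $A_\ell$ holds for all large integers $\ell$; combining this with the a.s.\ convergence along the integers gives $\rho\,\k(\rho)=\lim_{\ell\to\infty}(2\ell)^{2-d}\s_\ell\ge\e_0>0$. As $\rho>0$ we conclude $\k(\rho)>0$, and then \eqref{eq_diagonale} yields that $D(\rho)=\k(\rho)\,\mathbb{I}$ is strictly positive definite.

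There is no genuine obstacle at this stage: once Theorem~\ref{teo_crossings} is available, the argument is essentially bookkeeping with constants, and the single point that requires a moment's thought --- transferring the per-scale high-probability events (a) and (b) to the deterministic limit $\rho\,\k(\rho)$ --- is handled precisely by the summability of the error probabilities, i.e.\ by the quantitative (exponential in $\ell^{d-1}$) strength that Theorem~\ref{teo_crossings} was designed to deliver. The real difficulty of the theorem therefore lies entirely upstream, in the proof of Theorem~\ref{teo_crossings} itself.
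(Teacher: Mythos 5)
Your argument is correct and follows essentially the same route as the paper: Theorem~\ref{teo_crossings} plus Borel--Cantelli, then Lemma~\ref{lemma_LB}, then Proposition~\ref{prop_scaling}. The only (immaterial) difference is that you control $\sharp(\hat\o\cap\L_\ell)$ via the Poisson large-deviation bound of Remark~\ref{docile} and a second Borel--Cantelli, whereas the paper simply invokes the a.s.\ ergodic limit $\sharp(\hat\o\cap\L_\ell)/(2\ell)^d\to\rho$.
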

\begin{proof}
 By Theorem \ref{teo_crossings} and the Borel-Cantelli Lemma, for $P_{\rho,\nu}\otimes Q$--a.a.~$(\o,\underline u)$, for $\ell$ large enough  the cluster $\cC(\o,\underline u )$ has at least $c_1 {\ell}^{d-1}$ vertex-disjoint LR crossings of $\L_{\ell}$. By Lemma~\ref{lemma_LB}  this implies, for $P_{\rho,\nu}\otimes Q$--a.a.~$(\o,\underline u)$, that $\s_{\ell}(\o,\underline u)\geq c_1 \frac{{\ell}^{2d-2}}{2\sharp(\hat \o \cap \L_{\ell})}$ for $\ell$ large. To control the denominator we observe that  the  ergodicity of the homogeneous PPP on $\bbR^d$ implies that $\lim_{\ell\to+\infty}\frac{\sharp(\hat \o \cap \L_{\ell})}{(2{\ell})^d}=\rho$ 
$P_{\rho,\nu}\otimes Q$--a.s. Hence, we conclude that $\liminf_{\ell \to +\infty}{\ell}^{2-d}\s_{\ell} (\o, \underline u)>0$ $P_{\rho,\nu}\otimes Q$--a.s. By combining this with  Proposition~\ref{prop_scaling}  we get  that $\k(\rho)>0$ and therefore $D(\rho)$ is strictly positive definite.
\end{proof}

\subsection{Transport via diffusions on singular geometric structures}\label{transport_diff}
We conclude by discussing  another application of Theorem \ref{teo_crossings} to transport, where now modelling is based on  
 diffusions on random singular geometric structures (see e.g.~\cite{Zhikov06}).

Suppose that the triple $(\l,\nu,\varphi)$ satisfies  Assumptions~\ref{assumere}. We consider the RCM$(\rho,\nu,\varphi)$ where  $\rho\geq  \l$. We then  consider the 1d diffusion (with unit diffusion coefficient) on the graph $\bar{\cC}(\o,\underline u) $ given by the infinite cluster $\cC(\o,\underline{u})$ thought of as immersed in $\bbR^d$ with edges given by line segments (we take the segment $\overline{xy}$ from $x$ to $y$ instead of the pair $\{x,y\}$). We introduce the random measure $\mu(\o, \underline u ):= \sum _{ \{x,y\} }  \md s_{x,y}$, 
where  the sum is over the edges of the infinite cluster and $\md s_{x,y}$  is the  arc-length measure (i.e.~the one-dimensional Hausdorff measure $\cH^1$) on the segment $\overline{xy}$ from $x$ to $y$. We point out that in this section  we use the term diffusion without referring to a particular  stochastic process, but to the generalized differential operator $v\mapsto -{\rm div} \nabla_{\mu(\o,\underline u)} v$. We refer to \cite{Zhikov06} for its definition  (which anyway will not be used  in the rest). One can study stochastic homogenization for this model with the approach of \cite{Zhikov06}*{Section 7}, where  the same problem  is considered for  the supercritical cluster of Bernoulli bond percolation. 

Benefitting also from Assumption~\ref{assumere}\ref{ass:symmetry}, to adapt the  analysis \cite{Zhikov06}*{Section 7} to $\cC(\o,\underline u)$, we just need to prove that for some $C>0$, $P_{\rho,\nu}$--a.s.\ the following property $P(C)$ holds. 
 
\begin{Definition}[Property $P(C)$]
\label{def_PC}
Given $C>0$ we say that property $P(C)$ holds for $(\o,\underline{u})$, if,  for ${\ell}$ large enough, the following holds. If  $u$ 
is  a  continuous function  on the support of $\mu_{|\L_{\ell}}$ such that $u$ has  $\mu$--square integrable weak gradient along the arcs of $\bar{\cC}(\o,\underline u)$; $u$  equals zero on $\{x\in\L_{\ell}: x_1=-{\ell}\}$ and $u$ equals $2\ell$ on $\{x\in\L_{\ell}: x_1={\ell}\}$, then   it holds
\be \label{stima}
\frac{1}{(2{\ell})^d} \int _{\L_{\ell}} |\nabla u |_2^2 d\mu \geq C \,.
\en
\end{Definition}

If property $P(C)$ holds $P_{\rho,\nu}$-a.s.,  by applying the results and methods  in  \cite{Zhikov06}*{Section 7}, one gets that  the effective  conductivity $\k_{\rm eff}$ along the first direction is  deterministic  and $P_{\rho,\nu}$-a.s.\ equals the  limit as $\ell\to +\infty $ of the infimum among the above admissible functions $u$ of the energy given by the l.h.s.\ of \eqref{stima}.  Note that,  by \eqref{stima}, one then gets automatically that $\k_{\rm eff}>0$. 

Note that Theorem~\ref{teo_crossings} combined with Remark~\ref{docile} corresponds to \cite{Zhikov06}*{Theorem~7.1} for the Bernoulli bond percolation on $\bbZ^d$. Hence, by adapting the  arguments used in \cite{Zhikov06}*{Section~7} to our setting, we get 
the following.
\begin{Theorem}\label{teo3}
Suppose that the triple $(\l,\nu,\varphi)$ satisfies  Assumptions~\ref{assumere}. Then for any $\rho>\l$  there exists $C>0$ such that property $P(C)$  holds $P_{\rho,\nu}$-a.s.\ and in particular $ \k_{\rm eff} >0$.
\end{Theorem}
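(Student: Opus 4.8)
The plan is to reduce the statement to establishing property $P(C)$ of Definition~\ref{def_PC} for a suitable constant $C>0$. Indeed, once $P(C)$ holds $P_{\rho,\nu}$-a.s., the existence, determinism and strict positivity of $\k_{\rm eff}$ — together with its identification as a homogenized coefficient — is precisely the output of the scheme of \cite{Zhikov06}*{Section~7}, which transfers to our continuum RCM with only cosmetic changes (this is the ``adapting the arguments of \cite{Zhikov06}*{Section~7}'' step mentioned before the statement, and it is where Assumption~\ref{assumere}\ref{ass:symmetry} enters to make the limiting object isotropic). So the real content is a uniform lower bound on the Dirichlet energy along the infinite cluster.

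First I would fix the good event. Applying Theorem~\ref{teo_crossings} in the length-truncated form of Remark~\ref{docile}, there are $c_1,c_2>0$ and $C_1<\infty$ such that, for every $\ell\ge1$, with $P_{\rho,\nu}\otimes Q$-probability at least $1-e^{-c_2\ell^{d-1}}$ the infinite cluster of $G_d$ contains $N\ge c_1\ell^{d-1}$ vertex-disjoint LR crossings $\gamma_1,\dots,\gamma_N$ of $\L_\ell$, each made of at most $C_1\ell$ edges. By Assumption~\ref{assumere}\ref{ass:finite:support} each edge has Euclidean length $<\sqrt d\,\ell_*$, so every $\gamma_k$, seen as a polygonal path in $\bbR^d$, has total length at most $C_0\ell$ with $C_0:=\sqrt d\,\ell_*C_1$. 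By Borel--Cantelli, $P_{\rho,\nu}$-a.s.\ this situation occurs for all large $\ell$; from now on I work with such $\ell$ and fix, once and for all and independently of the test function $u$, a maximal such family of crossings.

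The heart of the matter is then a deterministic energy estimate, the continuum analogue of Kesten's crossing argument (compare \cite{Kesten82,Chayes86,Faggionato23a}). Write $\gamma_k=(y^k_1,\dots,y^k_{n_k})$ with $y^k_1\in\bbS_\ell^-$, $y^k_{n_k}\in\bbS_\ell^+$ and the remaining vertices in $\L_\ell$, and let $\Gamma_k$ be the sub-arc of the polygonal path $\bigcup_i\overline{y^k_i y^k_{i+1}}$ running from the point $p^-_k$ where it first meets the face $\{x\in\L_\ell:x_1=-\ell\}$ (necessarily on the first segment, since $(y^k_1)_1<-\ell\le(y^k_2)_1$) to the point $p^+_k$ where it afterwards first meets $\{x\in\L_\ell:x_1=\ell\}$. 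Convexity of $\L_\ell$ and the fact that all $y^k_i$ lie in the strip $\bbS_\ell$ give $\Gamma_k\subset\L_\ell$; since $p^\pm_k$ lie on segments corresponding to edges of the infinite cluster one also has $\Gamma_k\subset\supp(\mu_{|\L_\ell})$, and $\mathrm{length}(\Gamma_k)\le\mathrm{length}(\gamma_k)\le C_0\ell$. Parametrising $\Gamma_k$ by arc length $s\in[0,L_k]$ with $L_k\le C_0\ell$, the boundary conditions in Definition~\ref{def_PC} force $u=0$ at $p^-_k$ and $u=2\ell$ at $p^+_k$, so the fundamental theorem of calculus along the arc (valid since $u|_{\Gamma_k}$ has square-integrable weak derivative) together with Cauchy--Schwarz yields
\[
2\ell=\Bigl|\int_0^{L_k}\partial_s u\,\md s\Bigr|\le L_k^{1/2}\Bigl(\int_0^{L_k}|\partial_s u|^2\,\md s\Bigr)^{1/2},
\]
hence $\int_{\Gamma_k}|\nabla u|_2^2\,\md\mu=\int_0^{L_k}|\partial_s u|^2\,\md s\ge(2\ell)^2/L_k\ge 4\ell/C_0$. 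Since the $\gamma_k$ are vertex-disjoint, hence edge-disjoint, each edge of $G_d$ occurs in at most one $\gamma_k$; writing $\mu_{|\L_\ell}$ as the sum over edges of the restricted arc-length measures, the contributions of the $\Gamma_k$ add up without over-counting, so
\[
\int_{\L_\ell}|\nabla u|_2^2\,\md\mu\ \ge\ \sum_{k=1}^N\int_{\Gamma_k}|\nabla u|_2^2\,\md\mu\ \ge\ N\cdot\frac{4\ell}{C_0}\ \ge\ \frac{4c_1}{C_0}\,\ell^{d}.
\]
Dividing by $(2\ell)^d$ gives \eqref{stima} with $C:=4c_1/(2^dC_0)>0$, uniformly over all admissible $u$; thus $P(C)$ holds $P_{\rho,\nu}$-a.s., and \cite{Zhikov06}*{Section~7} then delivers $\k_{\rm eff}>0$.

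The main obstacle is not in the probabilistic input — which is entirely Theorem~\ref{teo_crossings}, Remark~\ref{docile} and Borel--Cantelli — but in the geometric and measure-theoretic bookkeeping: one must check carefully that the truncation points $p^\pm_k$ genuinely lie in the prescribed faces inside $\L_\ell$ and in $\supp(\mu_{|\L_\ell})$ (so that the imposed boundary values of $u$ apply and $u$ is defined and continuous there), that each $\Gamma_k$ stays inside $\L_\ell$, and, most importantly, that vertex-disjointness of the crossings really does rule out over-counting in the summation step — here one has to keep track of the truncated first and last segments of each $\Gamma_k$, which still belong to edges used by no other crossing. All of this is routine once the definitions of LR crossing and of $\mu$ are unwound, using only finite range of $\varphi$ and convexity of $\L_\ell$; I do not expect a genuine difficulty beyond being careful.
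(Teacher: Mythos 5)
Your proof is correct and follows essentially the same route as the paper: a Cauchy--Schwarz/fundamental-theorem-of-calculus bound along each crossing arc, summed over the vertex-disjoint (hence edge-disjoint) crossings supplied by Theorem~\ref{teo_crossings}, with Borel--Cantelli for the a.s.\ statement and \cite{Zhikov06}*{Section~7} for the passage from $P(C)$ to $\k_{\rm eff}>0$. The only (immaterial) difference is that you invoke the length-truncated crossings of Remark~\ref{docile} from the outset, which is precisely the second of the two concluding options the paper itself offers (the first being Jensen's inequality as in Lemma~\ref{lemma_LB}).
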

\begin{proof}  
Suppose that $y_0,y_1,\dots, y_n$ is a LR crossing of $\L_{\ell}$ included in the infinite cluster.
Calling $\s$ the $\cH^1$--measure with support $\L_{\ell}\cap\left(\bigcup _{i=0}^{n-1} \overline{y_i y_{i+1}}\right)$,  by applying the Cauchy--Schwarz inequality we get 
$
2{\ell}= \int \nabla u \md\s \leq L^{\frac{1}{2}}( \int |\nabla u|_2^2 \md\s )^\frac{1}{2}  $, where $L$ is the total mass of $\s$.
Note that $L\leq n{\ell}_*$ by Assumption~\ref{assumere}\ref{ass:finite:support}. Having obtained that   $(2{\ell})^{-d}\int |\nabla u|^2 \md\s\geq  (2\ell)^{2-d}/(n {\ell}_*)$ for any LR crossing of $\L_{\ell}$, and summing the contribution of a maximal set of vertex-disjoint LR crossings with cardinality $\cN_{\ell}$, we then get 
that the l.h.s.\ of \eqref{stima} is lower bounded by $(2\ell)^{2-d}{\ell}_*^{-1}\sum_{i=1}^{\cN_{\ell}} \frac{1}{n_i }$, where the $i$-th LR crossing has $n_i+1$ vertices. We conclude the proof by applying Jensen's inequality, as in Lemma~\ref{lemma_LB}, and Theorem~\ref{teo_crossings}, as done for resistor networks, or by using  directly Remark~\ref{docile}.
\end{proof}

\section{Application to generalized Poisson--Boolean models}\label{sec_gen_boolean}
When the connection function $\varphi(t,t')$ is the characteristic function of some measurable set $\cS\subset (\bbR^d\otimes\bbM)^2$, then the RCM reduces the a generalized Boolean model and the connection random variables can be disregarded.  Given $\o \in \O_*$, the graph $G_d=G_d(\o)$ has vertex set $\hat\o$ and edges given by the pairs $\{x,y\}$ with $x\not =y$ in $\hat\o$ and $\big((x,m_x), (y,m_y)\big)\in\cS$. We discuss in the rest two important particular cases.

\subsection{Poisson--Boolean model}
\label{subsec:boolean}
A very well-studied model is the Poisson--Boolean model, where $\bbM=[0,\infty)$ and $\cS$ equals
\[\cS_{\rm B}:=\{ ((x,m_x), (y,m_y)) : |x-y|_2
\leq m_x+m_y\}\] (i.e.\ given $x\not=y$ in $\hat\o$ the pair $\{x,y\}$ is an edge of $G_d(\o)$ whenever the balls of radius $m_x$ and $m_y$ centered at $x$ and $y$, respectively, intersect). It is known (see~\cite{Hall85}) that if $\int \nu(\md m) m^d =+\infty$ then, for any $\rho>0$, $P_{\rho,\nu}$--a.s.\ the union of balls covers all $\bbR^d$. The natural hypothesis is then to assume, as we now do,  that 
\be\label{hall}0< \int \nu(\md m) m^d <+\infty\,. \en
Under this assumption, there exists $\l_c(\nu)\in (0,+\infty)$ with the following property: for $\rho<\l_c(\nu)$ the graph $G_d(\o)$   has   only bounded  connected components $P_{\rho,\nu}$--a.s., while for $\rho>\l_c(\nu)$ the graph $G_d(\o)$ has a unique  infinite connected component $\cC(\o) $  $P_{\rho,\nu}$--a.s.\ (see~\cite{Gouere08}, the existence of at most one infinite connected component follows also from \cite{Chebunin24}*{Theorem~12.1} since for any intensity, \eqref{eq:irreducible:basic} is satisfied).  By combining Theorem~\ref{teo_crossings} with the results of \cite{DembinToappear} we get the following.
\begin{Theorem}\label{teo_booleano}
Assume \eqref{hall}
and consider the Poisson--Boolean model on the homogeneous PPP with intensity $\rho>\l_c(\nu)$. Then the following holds:
\begin{enumerate}
    \item\label{boolean:i} There exist $c_1,c_2>0$ such that, for all $\ell$ large enough, it holds
\[
P_{\rho,\nu} \left( \cN_{\ell}   \geq  c_1 {\ell}^{d-1}\right)\geq 1- \exp\left(- c_2 \,{\ell} ^{d-1}\right)\,,
\]
where $\cN_{\ell}$ denotes the maximal number of vertex-disjoint LR crossings of $\L_{\ell}$ included in the infinite cluster $\cC(\o)$ of $G_d(\o)$.
\item \label{boolean:ii} There exists $C>0$ such that property $P(C)$  holds $P_{\rho,\nu}$--a.s.\ and in particular $\kappa_{\rm eff} >0$.
\item \label{boolean:iii} Under the stronger assumption that $0<\int_0^{+\infty}\nu(\md m)m^{d+2}<+\infty$, the effective homogenized matrix $D(\rho)$ associated to the infinite cluster with unit conductances and the probability measure $P_{\rho,\nu}$ is well defined, it is diagonal and strictly positive definite.
\end{enumerate}
\end{Theorem}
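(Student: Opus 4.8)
The plan is to reduce the three claims of Theorem~\ref{teo_booleano} to the general results already at hand, the only real work being to verify Assumptions~\ref{assumere} for the Poisson--Boolean connection function (after a truncation of the radii), and then to push the truncation parameter to infinity. Throughout I would exploit the fact that for the Boolean model the connection function is the indicator of $\cS_{\rm B}$, so the $Q$--variables play no role and all statements become statements about $P_{\rho,\nu}$ alone.

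\medskip
\textbf{Step 1: truncation.} Fix $\rho>\l_c(\nu)$. Since $\l_c$ depends continuously (indeed monotonically) on the radius distribution and $\int m^d\,\nu(\md m)<\infty$, I would choose $R<\infty$ large enough that the Boolean model with radii capped at $R$ --- i.e.\ with mark distribution $\nu_R$, the law of $m\wedge R$ --- still has $\rho>\l_c(\nu_R)$; equivalently, pick $\l\in(\l_c(\nu_R),\rho)$. For this capped model the connection function $\varphi_R(t,t')=\1[|x-x'|_2\le (m\wedge R)+(m'\wedge R)]$ has spatial support contained in $\{|x-x'|_\infty\le 2R\}$, so Assumption~\ref{assumere}\ref{ass:finite:support} holds with $\ell_*=2R$; stationarity \ref{ass:stationary} and the coordinate-permutation/sign-flip symmetries \ref{ass:symmetry} are immediate because $\varphi_R$ depends on $x-x'$ only through $|x-x'|_2$. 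Uniqueness of the infinite cluster \ref{ass:irreducible} for all $\rho\ge\l$ holds by \cite{Chebunin24}*{Theorem~12.1} (the irreducibility \eqref{eq:irreducible:basic} is trivially satisfied for the Boolean model, since two points at mutual distance $<2r_0$ with $r_0>0$ in the support of $\nu_R$ are directly connected), and supercriticality \ref{ass:supercritical} at $\l$ holds by our choice $\l>\l_c(\nu_R)$. Hence Assumptions~\ref{assumere} hold for the triple $(\l,\nu_R,\varphi_R)$, and Theorems~\ref{teo_crossings}, \ref{teo2} and \ref{teo3} apply to it.

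\medskip
\textbf{Step 2: transfer to the untruncated model by monotone coupling.} The key structural fact is that enlarging the radii only adds edges: on a common marked PPP the graph $G_d$ for radii $m_x$ contains the graph $G_d$ for radii $m_x\wedge R$ as a subgraph, with the same vertex set. Therefore any LR crossing of $\L_\ell$ in the capped model is also an LR crossing in the full model, and the capped infinite cluster is contained in the full one, so $\cN_\ell$ for the full model stochastically dominates $\cN_\ell$ for the capped model. Claim~\ref{boolean:i} of the theorem then follows at once from Theorem~\ref{teo_crossings} applied in Step~1, with the very same constants $c_1,c_2$. For claim~\ref{boolean:ii}, the inequality \eqref{stima} in property $P(C)$ is obtained, exactly as in the proof of Theorem~\ref{teo3}, by a Cauchy--Schwarz estimate along each of a maximal family of vertex-disjoint LR crossings followed by Jensen; here one must be a little careful because the relevant measure $\mu$ (arc-length along the edges of the \emph{full} infinite cluster) is larger than in the capped model, but this only helps: restricting the family of crossings to the capped cluster and using that an admissible $u$ for $\bar\cC$ restricts to an admissible function along those arcs, the bound on $L\le n_i\,\ell_*$ still holds with $\ell_*=2R$, so the argument of Theorem~\ref{teo3} goes through verbatim using claim~\ref{boolean:i}, giving $P(C)$ for the full Boolean model and hence $\kappa_{\rm eff}>0$.

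\medskip
\textbf{Step 3: non-degeneracy of $D(\rho)$ under the stronger moment condition.} For claim~\ref{boolean:iii} I would invoke Theorem~\ref{teo2}, which gives $\kappa(\rho)>0$ for the capped model, together with the comparison $\s_\ell^{\rm full}\ge\s_\ell^{\rm capped}$ furnished by Rayleigh monotonicity (the full network contains the capped one with the same unit conductances), so that $\liminf_\ell(2\ell)^{2-d}\s_\ell^{\rm full}>0$ $P_{\rho,\nu}$--a.s.; by Proposition~\ref{prop_scaling} this forces $\rho\kappa(\rho)>0$ for the full model, provided the homogenized matrix $D(\rho)$ is well defined for the full model in the first place. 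That well-definedness, and the diagonal form \eqref{eq_diagonale}, is precisely where the extra hypothesis $\int m^{d+2}\,\nu(\md m)<\infty$ enters: the construction of $D(\rho)$ via the general framework of \cite{Faggionato25} (recalled in Appendix~\ref{app_fine}) requires square-integrability of the edge lengths against the relevant Palm-type measure, which for the Boolean model amounts to a $(d+2)$-nd moment of the radius; and the symmetry Assumption~\ref{assumere}\ref{ass:symmetry} --- which holds for the full $\varphi$, not merely for $\varphi_R$ --- yields \eqref{eq_diagonale}. So under that assumption $D(\rho)=\kappa(\rho)\mathbb I$ is well defined, and the above gives $\kappa(\rho)>0$.

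\medskip
\textbf{Main obstacle.} The delicate point is not Steps~1--2, which are routine once the monotone coupling is in place, but making sure that the truncation in Step~1 is harmless \emph{and} that Step~3 is not circular: one must check that $D(\rho)$ for the untruncated model is genuinely well defined under the $(d+2)$-nd moment hypothesis before one is allowed to conclude $\kappa(\rho)>0$ from the $\liminf$ of $\s_\ell$. Equivalently, the hard part is verifying that the hypotheses of \cite{Faggionato25} (integrability of the $L^2$--energy, summability of edge lengths in a box) hold for the Boolean model precisely under $\int m^{d+2}\,\nu(\md m)<\infty$ --- a moment computation on the Palm version of the PPP that I would carry out explicitly in Appendix~\ref{app_fine}. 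Everything else reduces cleanly to the three general theorems above.
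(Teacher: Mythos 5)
Your overall strategy (truncate the radii, verify Assumptions~\ref{assumere} for the truncated model, apply Theorems~\ref{teo_crossings}, \ref{teo2} and \ref{teo3}, then transfer to the full model by monotone coupling and Rayleigh monotonicity) is exactly the paper's; the paper truncates by discarding points with mark above a level $n$ and renormalising $\nu$ rather than by capping radii, but that difference is immaterial. There is, however, one genuine gap in your Step~1: you assert that $\rho>\l_c(\nu_R)$ for $R$ large because ``$\l_c$ depends continuously (indeed monotonically) on the radius distribution''. Monotonicity gives the \emph{wrong} direction: capping radii only removes edges, so $\l_c(\nu_R)\ge\l_c(\nu)$ and $R\mapsto\l_c(\nu_R)$ is non-increasing, whence $\lim_{R\to\infty}\l_c(\nu_R)$ exists but is a priori strictly larger than $\l_c(\nu)$. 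The identity $\lim_{R\to\infty}\l_c(\nu_R)=\l_c(\nu)$ under the minimal moment condition \eqref{hall} is the truncation problem for the Poisson--Boolean model; it is precisely the nontrivial external input the paper imports as \cite{DembinNaN}*{Corollary~1.6}, and it cannot be dismissed as ``continuity''. Your argument is complete once that result is cited (or proved); without it the choice of $R$ in Step~1 is unjustified.

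A smaller imprecision occurs in Step~3: Proposition~\ref{prop_scaling} is stated only for triples satisfying Assumptions~\ref{assumere}, which the untruncated model violates (the connection function has unbounded range), so you cannot apply it to $\s_\ell$ of the full cluster as written. One must instead invoke \cite{Faggionato25}*{Corollary~2.7} directly for the full model, which is legitimate exactly after the moment verification you defer to the appendix (the hypothesis $\int m^{d+2}\,\nu(\md m)<+\infty$ yields finite Palm expectation of $\sum_{x\sim 0}|x|^k$ for $k=0$ and $k=2$); this is what the paper does, and it also bypasses your detour through $\k$ of the capped model, since item \ref{boolean:i} plus Lemma~\ref{lemma_LB} already lower bound $\s_\ell$ for the full cluster. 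With these two repairs your Steps~2 and~3 coincide with the paper's proof of items \ref{boolean:i}--\ref{boolean:iii}.
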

\begin{proof}
\ref{boolean:i}  We  apply  \cite{DembinToappear}*{Corollary~1.6}. To this aim 
  we take $n$ large to have $\nu([0,n])>0$. We  write $\nu|_{[0,n]}$ for the the restriction of $\nu$ to $[0,n]$ and we write $\nu_n$ for the probability measure $\frac{1}{\nu([0,n])}\nu|_{[0,n]} $.   Trivially the measure $\rho \cL \otimes \nu|_{[0,n]}$ equals the measure
  $ \nu([0,n]) \rho\cL\otimes \nu_n$.  
Hence, by \cite{DembinToappear}*{Corollary~1.6},  we can fix $n$ large enough to have $\nu([0,n]) \rho>\l_c( \nu_n)$. Then,  Theorem~\ref{teo_crossings} gives a lower bound for the LR crossings of  the Boolean model given by the RCM associated to the triple $(\nu([0,n]) \rho, \nu_n, \1_{\cS_{\rm B}})$,  since the triple $(\l,\nu_n,\1_{\cS_{\rm B}})$ satisfies Assumption~\ref{assumere} for any 
$\l\in (\l_c(\nu_n),\rho)$. On the other hand,  since the measure $\rho \cL \otimes \nu $ dominates the measure $ \nu([0,n]) \rho\cL\otimes \nu_n$, it is simple to conclude by stochastic domination.

\ref{boolean:ii} The claim  follows from Theorem~\ref{teo3} and the domination pointed out for \ref{boolean:i} by monotonicity.

\ref{boolean:iii}  The diffusion matrix $D(\rho)$ is well defined since the model fits to the general abstract setting of e.g.~\cite{Faggionato25}. In this sense, we stress that the additional condition $\int_0^{+\infty}\nu(dm)m^{d+2}<+\infty$ implies   for both $k=0$ and $k=2$ that the expression 
$\sum _{x\in \hat \o: x\sim 0  } |x|^k  $ has finite expectation w.r.t.\ the Palm distribution $P_{\rho,\nu}(\cdot| 0 \in \cC(\o))$, which is one of the assumptions of \cite{Faggionato25} (the others can be easily checked). The proof that $D(\rho)$ is a strictly positive scalar matrix is obtained by combining Item~\ref{boolean:i} with Lemma~\ref{lemma_LB} and \cite{Faggionato25}*{Corollary~2.7} (similarly to the proof of Theorem~\ref{teo2}).
\end{proof}
Note that in the above Theorem~\ref{teo_booleano} we have not assumed anything similar to Assumptions~\ref{assumere}\ref{ass:finite:support}, in particular $\nu$ can have unbounded support. We stress that Theorem~\ref{teo_booleano} generalizes the result of \cite{Tanemura93} concerning the LR crossings for the case of a deterministic radius.

\subsection{Mott variable range hopping}\label{sec:Mott:vrh}

We conclude this section by discussing a relevant application to Mott v.r.h., the hopping mechanism at the basis of transport in amorphous solids as doped semiconductors.
In the generalized Boolean model corresponding to Mott v.r.h.\ with cutoff $\z>0$,   $\nu$ is a probability measure on $\bbR$  and the structure set $\cS=\cS_{\rm M}(\z)$ is given by 
\be\label{S_mott}
\cS_{\rm M}(\z):=\{ \left( (x,m_x), (y,m_y)\right)\in (\bbR^{d}\times\bbR) ^2 :  |x-y|+ w(m_x,m_y) \leq \z\}\,,
\en
where
\be\label{def_w} w(a,b)=|a|+|b|+|a-b|= \begin{cases} 2 \max\{|a|,|b|\} & \text{ if } ab\geq 0\,,\\
2|a-b| & \text{ if } ab <0\,.
\end{cases}
\en

Note that, since $w(a,b)\geq 2|a|$, any point with mark outside $(-\z/2,\z/2)$ is isolated. In particular, if $\nu\big((-\z/2,\z/2)\big)=0$,  then all points in the graph $G_d$ are isolated $P_{\rho,\nu}$--a.s. Excluding this trivial scenario, the model presents a non-trivial phase transition:
\begin{Lemma}\label{mott:phase:transition}
If $\nu \big((-\z/2,\z/2)\big)>0 $, then
   there exists $\l_c =\l_c(\nu,\z)\in (0,+\infty)$ with the following property: for $\rho<\l_c$ the graph $G_d$   has   only bounded  connected components $P_{\rho,\nu}$--a.s., while for $\rho>\l_c $ the graph $G_d$ has at least one unbounded connected component   $P_{\rho,\nu}$--a.s. 
\end{Lemma}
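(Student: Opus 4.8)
The plan is to establish the phase transition for the Mott v.r.h.\ Boolean model by a two-sided comparison with ordinary Poisson--Boolean models, exploiting monotonicity in the intensity and the structure of $w$. First I would fix $\z>0$ and assume $p_0:=\nu\big((-\z/2,\z/2)\big)>0$. Note that a point $x$ with mark $m_x$ participates in an edge only if $|m_x|<\z/2$; call such points \emph{active}. The active points form, after a thinning of the marked PPP, a homogeneous PPP on $\bbR^d$ with intensity $\rho p_0$, with i.i.d.\ marks of law $\nu_0:=\nu(\cdot\cap(-\z/2,\z/2))/p_0$. Since inactive points are isolated, the connected components of $G_d$ are exactly those of the Boolean-type graph $G_d^{\mathrm{act}}$ on the active points with connection set $\cS_{\mathrm M}(\z)$.

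For the subcritical direction I would dominate $G_d^{\mathrm{act}}$ from above by an ordinary Poisson--Boolean model. For active $x,y$ we have $|m_x|,|m_y|<\z/2$, hence $w(m_x,m_y)\ge 0$ and the edge condition $|x-y|+w(m_x,m_y)\le\z$ forces $|x-y|\le\z$. Thus $G_d^{\mathrm{act}}$ is a subgraph of the random geometric graph on a PPP of intensity $\rho p_0$ with deterministic connection radius $\z$ (equivalently a Poisson--Boolean model with deterministic radius $\z/2$ under the Gilbert/two-centre convention, matching the $|x-y|_2\le m_x+m_y$ form). That model has a strictly positive percolation threshold $\rho_c^{\mathrm{gem}}(\z)>0$ in the intensity; see e.g.~\cite{Gouere08} or the classical references. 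Therefore for $\rho p_0 < \rho_c^{\mathrm{gem}}(\z)$, i.e.\ $\rho$ below a positive constant, $G_d$ has only bounded components $P_{\rho,\nu}$--a.s.

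For the supercritical direction I would bound $G_d^{\mathrm{act}}$ from below, restricting to points whose mark lies in a small fixed window. Pick $\e\in(0,\z/2)$ and let $p_\e:=\nu\big((-\e,\e)\big)$; shrinking $\e$ only if needed we have $p_\e>0$ since $p_0>0$. Retain only points with $|m_x|<\e$: these form a PPP of intensity $\rho p_\e$, and for two such points $w(m_x,m_y)=|m_x|+|m_y|+|m_x-m_y|<4\e$, so the edge condition is \emph{implied by} $|x-y|\le\z-4\e$. Hence the subgraph of $G_d^{\mathrm{act}}$ on these points contains the random geometric graph on a PPP of intensity $\rho p_\e$ with deterministic connection radius $r:=\z-4\e>0$, which percolates once $\rho p_\e$ exceeds that model's (finite) threshold, i.e.\ once $\rho$ exceeds a finite constant depending on $\nu$ and $\z$. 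So for $\rho$ large enough $G_d$ has an unbounded component $P_{\rho,\nu}$--a.s. Monotonicity of the percolation event in $\rho$ (via the standard coupling in which increasing intensity only adds points, and adding an active point can only merge components — formally, couple all intensities on one space by thinning) then lets me define $\l_c:=\inf\{\rho: P_{\rho,\nu}(\text{unbounded component exists})=1\}$, and the two bounds above give $0<\l_c<\infty$; a $0$--$1$ argument (the existence of an unbounded component is a translation-invariant event, and by ergodicity of the PPP has probability $0$ or $1$ for each $\rho$) upgrades ``positive probability'' to ``a.s.'' on both sides of $\l_c$, completing the proof.

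The main obstacle is purely bookkeeping rather than conceptual: one must (i) phrase the deterministic-radius random geometric graph so that its known subcritical/supercritical thresholds in the intensity apply verbatim — this is where being careful about the $|x-y|_2\le m_x+m_y$ versus $|x-y|\le r$ conventions matters, but it is routine; and (ii) make the monotone coupling across intensities explicit enough to justify that $\l_c$ is well defined and that the dichotomy is clean (no gap where neither statement holds), which follows from the standard splitting of a PPP of intensity $\rho'$ into one of intensity $\rho<\rho'$ plus an independent one of intensity $\rho'-\rho$, together with the fact that adding active points to $G_d^{\mathrm{act}}$ never disconnects it. Neither step requires Assumptions~\ref{assumere} or Theorem~\ref{teo_crossings}; this lemma is only a preliminary that identifies the regime $\rho>\l_c$ in which those hypotheses hold for the Mott model.
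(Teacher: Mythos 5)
Your subcritical bound, the monotone coupling in $\rho$, and the $0$--$1$/ergodicity argument are all fine and coincide with what the paper does (in one line, via stochastic domination). However, the supercritical direction has a genuine gap. You claim that, ``shrinking $\e$ only if needed'', $p_\e=\nu\big((-\e,\e)\big)>0$ follows from $\nu\big((-\z/2,\z/2)\big)>0$. This is false: shrinking $\e$ can only decrease $p_\e$, and the hypothesis does not force $\nu$ to charge any neighbourhood of $0$. Take $\nu=\delta_{3\z/10}$. Then $\nu\big((-\z/2,\z/2)\big)=1$, but $\nu\big((-\e,\e)\big)=0$ for every $\e\le 3\z/10$, hence in particular for every $\e<\z/4$, which is exactly the range you need for your radius $r=\z-4\e$ to be positive. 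For such $\nu$ your lower-bound construction retains an empty point process and proves nothing, even though the lemma is true there (the only admissible marks give $w(m,m')=3\z/5<\z$, so edges form at distance $\le 2\z/5$ and the model percolates for large $\rho$).

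The fix --- and this is what the paper's proof does --- is to centre the mark window at an arbitrary point $x\in\supp\nu\cap(-\z/2,\z/2)$ rather than at $0$. Setting $\delta:=(\z/2-|x|)/4$, the window $[x-\delta,x+\delta]$ has positive $\nu$-mass by definition of the support, and for marks $a,b$ in it one has $w(a,b)\le |a|+|b|+|a-b|\le 2|x|+4\delta=|x|+\z/2<\z$, leaving room $\z/2-|x|=4\delta>0$ for the spatial displacement; hence $G_d$ contains a deterministic-radius Poisson--Boolean model of positive radius on a PPP of positive intensity, which is supercritical for $\rho$ large. With that single replacement the rest of your argument goes through unchanged.
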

\begin{proof} 
 The random graph $G_d$ is included in the Poisson--Boolean model with deterministic radius $\z/2$ on the  PPP with intensity $\rho$, so for $\rho>0$ small enough, $P_{\rho,\nu}$-a.s.\ $G_d$ has no infinite cluster. On the other hand, fixing some $x\in (-\zeta/2,\zeta/2)\cap\supp\nu$, we have that $G_d$ contains the Poisson--Boolean model with deterministic radius $(\zeta/2-|x|)/4$ on the points with marks in $[x-(\zeta/2-|x|)/4,x+(\zeta/2-|x|)/4]$ (which form a PPP with intensity $\rho\nu([x-(\zeta/2-|x|)/4,x+(\zeta/2-|x|)/4])>0$). Hence for $\rho$ large enough, $P_{\rho,\nu}$-a.s.\ $G_d$ has at least one  infinite cluster. The conclusion then follows by stochastic domination.
\end{proof}

It is immediate to check items \ref{ass:stationary}, \ref{ass:finite:support} and \ref{ass:symmetry} of Assumption~\ref{assumere}. By Lemma~\ref{mott:phase:transition} Assumption~\ref{assumere}\ref{ass:supercritical} is satisfied if $\nu\big((-\z/2,\z/2)\big)>0$ and $\l>\l_c$. To deal with Assumption~\ref{assumere}\ref{ass:irreducible} we  introduce the following.
\begin{Definition}\label{def_good} We say that $\nu$ is \emph{good} if $\nu\big((-\z/2,\z/2)\big)>0$ and, either $\supp\nu$ intersects at most one of $(-\z/2,0)$ and $(0,\z/2)$, or $A_+-A_-<\z/2$ where $A_-:=\sup\left( \supp\nu \cap (- \z/2,0)\right)$ and $ A_+:=\inf\left(\supp\nu \cap [0,\z/2)\right)$.
\end{Definition}

\begin{Lemma}\label{lem:mott:irr}If $\nu$ is good, then Assumption~\ref{assumere}\ref{ass:irreducible} is satisfied for any $\l>0$.
\end{Lemma}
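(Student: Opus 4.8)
The goal is to verify the irreducibility criterion \eqref{eq:irreducible}, or equivalently \eqref{eq:irreducible:basic}, for the structure set $\cS_{\rm M}(\z)$ when $\nu$ is good. The plan is to show that for $(\cL\otimes\nu)^{\otimes 2}$--a.a.\ pairs $(t,t')=((x,a),(x',a'))$, one can connect $t$ to $t'$ through a chain of intermediate Poisson points with suitably chosen marks. Since a mark $a$ is only non-isolated when $|a|<\z/2$, we may freely restrict to $a,a'\in(-\z/2,\z/2)$, these being the only relevant marks. The key geometric observation is: if one has a point of mark $b$ with $|b|$ small, it connects to any other point of mark $b'$ within spatial distance $\z-w(b,b')$; and for marks of the same sign (or zero) close to $0$, $w(b,b')=2\max\{|b|,|b'|\}$ is small, so such points connect over a spatial range close to $\z$. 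Thus a ``highway'' of points carrying marks near $0$ of a single sign suffices to traverse space.

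The first step is to reduce to a single-sign regime. Fix a value $m_0\in\supp\nu\cap(-\z/2,\z/2)$; WLOG $m_0\ge 0$ (the case $m_0<0$ being symmetric). By the definition of goodness, either $\supp\nu$ meets at most one of $(-\z/2,0)$, $(0,\z/2)$ — in which case, combined with $m_0$, we can always find a small interval $I\subset(-\z/2,\z/2)$ with $\nu(I)>0$ on which all marks have a common sign and magnitude bounded by some $\eta<\z/2$ strictly — or else $A_+-A_-<\z/2$, in which case we can pick marks near $A_-$ and near $A_+$ (of opposite signs) that nonetheless connect to each other since $w(A_-,A_+)\le 2(A_+-A_-)<\z$ in the relevant regime, hmm, more precisely we want $w(a,a')$ bounded away from $\z$ for $a$ near $A_-$, $a'$ near $A_+$, which holds because $w(A_-,A_+)=|A_-|+|A_+|+|A_--A_+|<\z$ need not hold directly — rather one uses that any $a\in(A_-,0)$ with $\nu$-mass nearby and $a'\in[0,A_+)$ similar, but $\supp\nu\cap(A_-,0)=\emptyset$ and $\supp\nu\cap(0,A_+)=\emptyset$ by definition of $A_\pm$, so the relevant marks are near $A_-$ and $A_+$, and $w(a,a')\to |A_-|+A_++(A_+-A_-)=2A_+-2A_-<\z$. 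So in all cases there is a Borel set $M_0\subset(-\z/2,\z/2)$ with $\nu(M_0)>0$ and a constant $r_0>0$ such that any two points with marks in $M_0$ at spatial distance $\le r_0$ are adjacent in $G$.

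The second step is the chaining argument. Given generic $t=(x,a)$, $t'=(x',a')$ with $a,a'\in(-\z/2,\z/2)$, choose $r_1>0$ small enough that a point of mark $a$ connects to a point of mark in $M_0$ within distance $r_1$ (possible for a.e.\ $a$ since $a\in\supp\nu$ after restricting, and by continuity of $w$), and similarly for $a'$; then build a path $x\to z_1\to z_2\to\cdots\to z_k\to x'$ where $z_1,z_k$ lie within $r_1$ of $x,x'$ and carry marks near $a$, resp.\ $a'$... no — cleaner: let $z_1,\dots,z_k$ all carry marks in $M_0$, with $|x-z_1|\le r_1$, $|z_i-z_{i+1}|\le r_0$, $|z_k-x'|\le r_1$; here $r_1$ is chosen so small that $|x-z_1|\le r_1$ forces adjacency of $t$ with $(z_1,\cdot)$ (using $w(a,b)<\z$ for $b\in M_0$, which holds after possibly shrinking $M_0$ to marks close enough to the sign-compatible regime of $a$). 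One then partitions a thin tube joining $x$ to $x'$ into $O(|x-x'|)$ small boxes and, using \cite{Chebunin24}*{Proposition~5.1}, the event that each box contains a Poisson point with mark in (the appropriate shrunk subset of) $M_0$ has positive probability, these events being independent over disjoint boxes; hence the overall connection probability is positive.

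**Main obstacle.** The delicate point is the case distinction in the definition of goodness and making the mark-magnitude/distance trade-off uniform: when $\supp\nu$ straddles $0$ with opposite-sign marks, the quantity $w(a,a')=2|a-a'|$ for $aa'<0$ can be as large as $2(A_+-A_-)$, and one needs $A_+-A_-<\z/2$ precisely to keep a spatial connection range $\z-2(A_+-A_-)>0$ available — but one also needs this range to remain positive after the unavoidable perturbations (marks are only near $A_\pm$, not equal to them, and a.e.\ $a,a'$ must be linked to the highway). Handling these nested approximations — first shrinking $M_0$, then choosing $r_0,r_1$, then choosing the tube width — so that every adjacency in the chain is strict, is the bookkeeping core of the proof; the probabilistic part (independence over disjoint boxes, positivity via Proposition~5.1 of \cite{Chebunin24}) is then routine.
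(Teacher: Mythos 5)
Your proposal is correct and follows essentially the same route as the paper: restrict to the non-isolated marks in $(-\z/2,\z/2)$, connect the two given points by a ``highway'' of Poisson points whose marks lie near a fixed point of $\supp\nu$ of constant sign (so that $w$ stays below $\z$ and a positive spatial connection range survives), switch signs when necessary via a bridge between marks near $A_-$ and $A_+$ using $A_+-A_-<\z/2$, and conclude via the Chebunin--Last criterion \eqref{eq:irreducible}. The one point to implement with the same care as the paper is the initial reduction: since \eqref{eq:irreducible} genuinely fails for pairs involving marks outside $(-\z/2,\z/2)$, one must pass to the thinned process with law $P_{\bar\rho,\bar\nu}$, $\bar\nu:=\nu(\cdot\,|(-\z/2,\z/2))$, verify irreducibility there, and use that the unbounded components of the two graphs coincide (also, the positivity of the box-occupation probabilities is elementary Poisson and not \cite{Chebunin24}*{Proposition~5.1}, which only serves to pass from \eqref{eq:irreducible} to Assumption~\ref{assumere}\ref{ass:irreducible}).
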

\begin{proof}
As already observed points with marks outside $(-\z/2,\z/2)$ are isolated in $G_d$. Consider the  graph $\bar G_d$ given by  $G_d$ restricted to the points $(x,m_x)\in\o$ such that $|m_x|<\z/2$. Then, $\bar G_d$ is again a generalized Poisson--Boolean model with structure set $\cS_{\rm M}$ given by \eqref{S_mott},  now driven by a marked PPP with law $P_{\bar \rho,\bar\nu}$ where $\bar\rho=\rho \nu\left((-\z/2,\z/2)\right)>0$ and  $\bar \nu:=\nu\left(\cdot |(-\z/2,\z/2)\right)$. Since the unbounded connected components of $G_d$ and $\bar G_d$ coincide, our claim is proved, if we derive it for $\nu$ concentrated on  $(-\z/2,\z/2)$. 

By the above discussion, we now assume that $\nu$ is concentrated on  $(-\z/2,\z/2)$ and aim to verify Assumption~\ref{assumere}\ref{ass:irreducible} by using the criteria developed in \cite{Chebunin24} and recalled in Section~\ref{sec_model_results}. Take $m_x,m_y\in [0,\z/2)\cap \supp \nu$. Since $w(m_x,m_x)<\z$ and $w(m_x,m_y)<\z$, setting $t:=(x,m_x)$, $t':=(y,m_y)$ and letting $\xi$ be a  PPP with law $P_{\rho,\nu}$,  we have that $t$ and $t'$ are connected in $G(\{t,t'\}\cup\xi)$ with positive probability (it is enough that around the segment from $x$ to $y$ in $\bbR^d$ there are sufficiently dense points with marks around $m_x$). A similar argument holds with $m_x,m_y\in (-\z/2,0]\cap \supp \nu$. If $\supp\nu \cap (-\z/2,\z/2)$ contains points of different sign, $A_+-A_-<\zeta/2$ allows us to similarly prove that  $t$ and $t'$ are connected in $G(\{t,t'\}\cup\xi)$ with positive probability if $m_x$ and $m_y$ are sufficiently close to $A_-$ and $A_+$, respectively (thus ensuring that $w(m_x,m_y)< \z$). Putting these constructions together, 
we get \eqref{eq:irreducible}, so Assumption~\ref{ass:irreducible} is satisfied by \cite{Chebunin24}*{Proposition~5.1, Theorem~12.1}.   
\end{proof}

\begin{Remark} 
By the above, if $\nu$ is good,   then Assumption~\ref{assumere} is satisfied. In particular, if  $0\in\supp\nu$ (as relevant in physics), then $\nu$ is good and  Assumption~\ref{assumere} holds.
\end{Remark}

\begin{Remark}\label{misto}If $\supp \nu$ intersects both $(-\z/2,0)$ and $[0,\z/2)$ but $\nu$ is not good, then we can write the PPP with law  $P_{\rho,\nu}$ as superposition of independent PPPs with laws $P_{\rho_-,\nu_-}$ and $P_{\rho_+, \nu_+}$ where $\rho_-:=\rho\nu((-\infty,A_-])$, $\nu_-=\nu(\cdot|(-\infty,A_-]) $, $\rho_+:=\rho\nu([A_+,+\infty))$, $\nu_+=\nu(\cdot|[A_+,+\infty)) $. A.s.\ there is no edge in the RCM$(\rho,\nu,\varphi)$ between these two PPPs (and in particular $\l_c(\nu)=\min\{\l_c(\nu_-),\l_c(\nu_+)\}$) and for each one, Assumption~\ref{assumere}\ref{ass:irreducible} is verified by Lemma~\ref{lem:mott:irr}. In particular, one can apply our general approach to $P_{\rho_-,\nu_-}$ and $P_{\rho_+, \nu_+}$ separately.
\end{Remark}

\begin{Theorem}\label{teo_mott_vrh}
Assume that $\nu$ is good. Then, for any $\rho>\l_c$, the generalized Boolean model with structure set $\cS_{\rm M}$ built on the marked PPP with law $P_{\rho,\nu}$ satisfies \ref{boolean:i} and \ref{boolean:ii} of Theorem~\ref{teo_booleano} where $\cC(\o)$ is a.s.\ the unique infinite cluster. Moreover, the effective homogenized matrix $D(\rho)$ associated to $\cC(\o)$ with unit conductances and to $P_{\rho,\nu}$  is diagonal and strictly positive definite. 
\end{Theorem}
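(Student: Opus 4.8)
The plan is to recognise Theorem~\ref{teo_mott_vrh} as an application of Theorems~\ref{teo_crossings} and~\ref{teo3} and of the homogenization results of~\cite{Faggionato25}, once the relevant triple is shown to satisfy Assumptions~\ref{assumere}. Since the connection function $\varphi=\1_{\cS_{\rm M}(\z)}$ is an indicator, the connection variables $\underline u$ play no role: $G_d(\o,\underline u)=G_d(\o)$ and every $P_{\rho,\nu}\otimes Q$--a.s.\ statement reduces to a $P_{\rho,\nu}$--a.s.\ one, so that items~\ref{boolean:i}--\ref{boolean:ii} of Theorem~\ref{teo_booleano} make sense in this setting.

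First I would fix $\rho>\l_c$ and choose a threshold $\l\in(\l_c,\rho)$, and check Assumptions~\ref{assumere} for $(\l,\nu,\1_{\cS_{\rm M}(\z)})$. Items~\ref{ass:stationary}, \ref{ass:finite:support} and~\ref{ass:symmetry} are immediate from~\eqref{S_mott}: the spatial support of $\varphi$ is contained in $\{|x-y|\le\z\}$, and since $w(a,b)=|a|+|b|+|a-b|$ depends only on the marks while $|x-y|$ depends only on the Euclidean distance, $\varphi$ is invariant under coordinate permutations and sign flips. Because $\nu$ is good we have $\nu\big((-\z/2,\z/2)\big)>0$, so Lemma~\ref{mott:phase:transition} applies and $\l>\l_c$ gives Assumption~\ref{assumere}\ref{ass:supercritical}, while Lemma~\ref{lem:mott:irr} gives Assumption~\ref{assumere}\ref{ass:irreducible} (for any $\l>0$). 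Hence $(\l,\nu,\1_{\cS_{\rm M}(\z)})$ satisfies Assumptions~\ref{assumere}, and by stochastic domination in $\rho$ together with Assumptions~\ref{assumere}\ref{ass:irreducible}--\ref{ass:supercritical} the graph $G_d(\o)$ has, $P_{\rho,\nu}$--a.s., a unique infinite cluster $\cC(\o)$. With this, item~\ref{boolean:i} is Theorem~\ref{teo_crossings} and item~\ref{boolean:ii} is Theorem~\ref{teo3}.

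For the matrix $D(\rho)$ I would reproduce the argument used for Theorem~\ref{teo_booleano}\ref{boolean:iii}, with a simplification. The model fits the abstract framework of~\cite{Faggionato25}; its only nontrivial hypothesis is that, for $k=0$ and $k=2$, the quantity $\sum_{x\in\hat\o:\,x\sim 0}|x|^k$ has finite expectation under the Palm distribution $P_{\rho,\nu}(\cdot\mid 0\in\cC(\o))$. Here this holds without any moment assumption on $\nu$: by the cutoff in~\eqref{S_mott} every edge has Euclidean length at most $\z$, so $|x|\le\z$ whenever $x\sim 0$, while the degree of $0$ is dominated by the number of Poisson points in the ball of radius $\z$ around $0$, which has finite moments of all orders. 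Thus $D(\rho)$ is well defined; by Assumption~\ref{assumere}\ref{ass:symmetry} it is a scalar matrix, i.e.\ $D(\rho)=\k(\rho)\mathbb I$ as in~\eqref{eq_diagonale}; and combining item~\ref{boolean:i} with Lemma~\ref{lemma_LB} and Proposition~\ref{prop_scaling} (that is, \cite{Faggionato25}*{Corollary~2.7}) exactly as in the proof of Theorem~\ref{teo2} yields $\k(\rho)>0$, so $D(\rho)$ is strictly positive definite.

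I do not expect a genuine obstacle: the substantive work — the seedless renormalisation of~\cite{Duminil-Copin21}, Tanemura's growth process, the uniqueness criterion of~\cite{Chebunin24} — is already packaged into Theorems~\ref{teo_crossings} and~\ref{teo3} and Lemmas~\ref{mott:phase:transition} and~\ref{lem:mott:irr}, so the proof is essentially assembly. The two points deserving a line of care are the choice of $\l$ strictly between $\l_c$ and $\rho$ (so that both Assumption~\ref{assumere}\ref{ass:supercritical} at level $\l$ and the strict inequality $\rho>\l$ needed by Theorems~\ref{teo_crossings} and~\ref{teo3} hold at once) and the remark that the $\cS_{\rm M}$--cutoff makes the \cite{Faggionato25}-moment condition automatic, so that — unlike in Theorem~\ref{teo_booleano}\ref{boolean:iii} — no extra integrability of $\nu$ is required.
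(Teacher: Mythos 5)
Your proposal is correct and matches the paper's intent: the paper explicitly omits the proof, stating that Theorem~\ref{teo_mott_vrh} is an immediate application of Theorems~\ref{teo_crossings}, \ref{teo2} and \ref{teo3} with $\varphi:=\1_{\cS_{\rm M}}$ and $\l\in(\l_c,\rho)$, which is exactly the assembly you carry out (with Lemmas~\ref{mott:phase:transition} and~\ref{lem:mott:irr} supplying Assumptions~\ref{assumere}\ref{ass:irreducible}--\ref{ass:supercritical}). Your added observation that the cutoff in $\cS_{\rm M}$ makes the moment hypotheses of \cite{Faggionato25} automatic is a correct and worthwhile detail, consistent with the fact that Assumption~\ref{assumere}\ref{ass:finite:support} already guarantees the setting of Theorem~\ref{teo2}.
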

Theorem~\ref{teo_mott_vrh} is an immediate application of Theorems~\ref{teo_crossings}, \ref{teo2} and \ref{teo3} with $\varphi:=\1_{\cS_{\rm M}}$ and $\l\in (\l_c,\rho)$, hence we omit the proof.

\begin{Theorem}\label{teo_mott_vrh_2}
Suppose that $\nu\big( (-\z/2,\z/2)\big)>0$.
Then for any $\rho>\l_c(\nu)$ there exist $c_1,c_2>0$ such that, for all $\ell$ large enough, it holds
\be
P_{\rho,\nu} \left( \cN^*_{\ell}   \geq  c_1 {\ell}^{d-1}\right)\geq 1- \exp\left(- c_2 \,{\ell} ^{d-1}\right)\,,
\en
where $\cN^*_{\ell}$ denotes the maximal number of vertex-disjoint LR crossings of $\L_{\ell}$.
\end{Theorem}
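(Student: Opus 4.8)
The plan is to deduce this from Theorem~\ref{teo_crossings} (equivalently Theorem~\ref{teo_mott_vrh}) via the elementary monotonicity observation that an LR crossing of $\L_{\ell}$ contained in any subgraph of $G_d$ is, in particular, an LR crossing of $\L_{\ell}$ in $G_d$ itself; hence $\cN^*_\ell$ dominates the number of vertex-disjoint LR crossings inside any convenient subgraph, and the only task is to exhibit such a subgraph to which Theorem~\ref{teo_crossings} applies. When $\nu$ is good this is immediate: for $\rho>\l_c(\nu)$, Theorem~\ref{teo_mott_vrh} (through item~\ref{boolean:i} of Theorem~\ref{teo_booleano}) provides $c_1,c_2>0$ with $P_{\rho,\nu}(\cN_\ell\ge c_1\ell^{d-1})\ge 1-e^{-c_2\ell^{d-1}}$ for all $\ell\ge1$, where $\cN_\ell$ counts vertex-disjoint LR crossings of $\L_\ell$ inside the (a.s.\ unique) infinite cluster $\cC(\o)\subseteq G_d$, so $\cN^*_\ell\ge\cN_\ell$ settles this case.

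Now suppose $\nu$ is not good. Since $\nu((-\z/2,\z/2))>0$, failure of Definition~\ref{def_good} forces $\supp\nu$ to meet both $(-\z/2,0)$ and $(0,\z/2)$ and forces $A_+-A_-\ge\z/2$; thus we are precisely in the situation of Remark~\ref{misto}. There, $P_{\rho,\nu}$ is realised as a superposition of two independent PPPs of laws $P_{\rho_-,\nu_-}$ and $P_{\rho_+,\nu_+}$, with a.s.\ no edge of $G_d$ between them; hence, as subgraphs, $G_d$ is the disjoint union of $G_d^{\mp}:=\mathrm{RCM}(\rho_\mp,\nu_\mp,\1_{\cS_{\rm M}})$, and $G_d$ percolates iff $G_d^-$ or $G_d^+$ does. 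Since $\rho>\l_c(\nu)$ strictly and $\l_c(\nu)=\min\{\l_c(\nu_-),\l_c(\nu_+)\}$ (Remark~\ref{misto}, in its notation), at least one of $G_d^-$, $G_d^+$ --- say $G_d^-$ --- is strictly supercritical at the given $\rho$, i.e.\ $\rho_-$ strictly exceeds the critical intensity of the Mott v.r.h.\ model with mark law $\nu_-$.

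It remains to verify that $(\l,\nu_-,\1_{\cS_{\rm M}})$ satisfies Assumptions~\ref{assumere} for a suitable $\l$. Strict supercriticality of $G_d^-$ forces $\nu_-((-\z/2,\z/2))>0$ --- otherwise all points of $G_d^-$ would be isolated, cf.\ the paragraph preceding Lemma~\ref{mott:phase:transition} --- so, by Lemma~\ref{mott:phase:transition}, we may pick $\l$ strictly between the critical intensity of the $\nu_-$-model and $\rho_-$. For this $\l$: items~\ref{ass:stationary}, \ref{ass:finite:support}, \ref{ass:symmetry} are immediate for Mott v.r.h.; \ref{ass:supercritical} holds by the choice of $\l$; and, crucially, \ref{ass:irreducible} holds by Lemma~\ref{lem:mott:irr} because $\nu_-$ is good --- its support is contained in $(-\infty,A_-]\subseteq(-\infty,0)$, hence meets at most one of $(-\z/2,0)$ and $(0,\z/2)$, while $\nu_-((-\z/2,\z/2))>0$. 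Theorem~\ref{teo_crossings} applied at intensity $\rho_->\l$ (with $\varphi=\1_{\cS_{\rm M}}$, so the connection variables are irrelevant) then yields $c_1,c_2>0$ such that, for all $\ell\ge1$, with probability at least $1-e^{-c_2\ell^{d-1}}$ the infinite cluster of $G_d^-$ contains at least $c_1\ell^{d-1}$ vertex-disjoint LR crossings of $\L_\ell$. As $G_d^-\subseteq G_d$, each such crossing is an LR crossing of $\L_\ell$ in $G_d$, whence $\cN^*_\ell\ge c_1\ell^{d-1}$ with probability at least $1-e^{-c_2\ell^{d-1}}$ under $P_{\rho,\nu}$, as desired.

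I do not anticipate a real obstacle: the statement is a short corollary of Theorem~\ref{teo_crossings} and the decomposition of Remark~\ref{misto}. The one substantive point --- and the reason one cannot feed $\nu$ itself into Theorem~\ref{teo_crossings} when $\nu$ is not good --- is that Assumption~\ref{assumere}\ref{ass:irreducible} then fails for $\nu$ at large intensities, since once both $G_d^\pm$ percolate there are two infinite clusters. Restricting to a strictly supercritical single-sign sub-model $\nu_\mp$, for which Lemma~\ref{lem:mott:irr} becomes available precisely because the marks have a fixed sign, and using that LR crossings are never gained on passing to the subgraph $G_d^\mp\subseteq G_d$, is exactly the maneuver that resolves this.
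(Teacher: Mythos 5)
Your proposal is correct and follows essentially the same route as the paper: handle the good case via Theorem~\ref{teo_mott_vrh}, and in the non-good case use the decomposition of Remark~\ref{misto} into the two single-sign sub-models (each of which is good) and apply the crossing bound to the strictly supercritical one, concluding by monotonicity of $\cN^*_\ell$ under subgraph inclusion. The extra details you supply (verifying that $\nu_-$ is good and that its restriction retains mass in $(-\z/2,\z/2)$) are exactly the points the paper leaves implicit.
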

\begin{proof} If $\nu$ is good, the claim  is an immediate consequence of Theorem~\ref{teo_mott_vrh} for what concerns the LR crossings of $\L_\ell$ included in the unique infinite cluster. If $\supp \nu$ intersects both $(-\z/2,0)$ and $(0,\z/2)$ and $A_+-A_-\ge\z/2$,  keeping the same as in Remark~\ref{misto}, both $\nu_-$ and $\nu_+$ are good and we can apply the previous result for good mark distributions to $P_{\rho_-,\nu_-}$ if $\l_c=\l_c(\nu_-)$ or to $P_{\rho_+,\nu_+}$ if $\l_c=\l_c(\nu_+)$.
\end{proof}

We point out that Theorem~\ref{teo_mott_vrh_2} is fundamental for the derivation of Mott's law in \cite{Faggionato23a} when the energy marks may have different signs. Additionally, Theorem~\ref{teo_mott_vrh_2} extends the result of \cite{Faggionato21}  (where marks have a fixed sign) for Mott~v.r.h.\ with a more direct and shorter proof.

\section{Preliminaries} \label{sec_preliminaries}
 \subsection{RCM driven by a PPP on \texorpdfstring{$\mathbb R^d\times \mathbb M$}{RdxM}} In Section~\ref{sec_model_results}, we dealt with the  graph $G_d$ in $\bbR^d$, since in applications this is the relevant object and marks just  model some local randomness in its construction. On the other hand, for our proofs,  it is more natural to deal with connections and graphs in $\bbR^d \times\bbM$.
 
  Given a configuration $\o\in \O$, 
 we can enumerate the points in $\o$ in  a measurable way \cite{Last18}*{Proposition~6.2}, i.e.\ the map $\O\to (\bbR^d\times \bbM)\cup\{\partial\}:\o \mapsto t_i(\o)$ is measurable, where $t_i(\o)$ is the $i$--th point of $\o$ if it exists, or the abstract point $\partial$ if  the $i$--th point  does not exist.
When we write $\o=\{t_1,t_2,\dots\}$ we mean that $t_1,t_2,\dots$ is the resulting enumeration. Recall that $J:=\{(i,j)\in \bbN_+\times\bbN_+\,:\, 1\leq i < j\}$.
 Given $\o=\{t_1,t_2,\dots\}\in\O$ and $\underline u \in [0,1]^J$, the graph $G(\o, \underline u)$ is defined as follows: it has 
  vertex set $\o$ and edge set given by the pairs $\{t_i,t_j\}$ of distinct vertices such that $u_{i,j}\leq \varphi (t_i, t_j)$ (we use the convention that $u_{i,j}:=u_{j,i}$ for  $1 \leq j <i$).

  Let $G=(V,E)$ be a graph as above  (i.e.\ $G=G(\o,\underline u)$), where $V$ and $E$ are respectively the vertex set and the edge set. If $\{t,t'\}$ is an edge, we write $t\sim t'$ and say that $t,t'$ are \emph{adjacent}. A path in $G$  is a string $(t_1,t_2,\dots, t_n)$ such that  $t_k\in V$ for all $k=1,2,\dots,n$ and  $\{t_k,t_{k+1}\}\in E$ for $k=1,2,\dots, n-1$.
 Given $A,B,D \subset  \bbR^d\times\bbM$ we say that there is a path from $A$ to $B$ in $D$ if there is a path  $(t_1,t_2,\dots, t_n)$ in $G$ such that $t_1\in A$, $t_n\in B$ and $t_k\in D$ for all $k=1,2,\dots, n$. In this case we write ``$A\leftrightarrow B$ in $D$'' or also ``$A\leftrightarrow B$ in $D$ in $G$'' when we need  to stress that we deal with the graph $G$. If $D=\bbR^d\times \bbM$, it is omitted from the notation.

The main object of our investigation is the random graph $G(\o, \underline u)$ defined on the probability space $\O\times [0,1]^J$ with the probability measure $P_{\rho, \nu} \otimes Q$.
    This is the RCM  on $\bbR^d\times \bbM$ driven by the PPP with law $P_{\rho, \nu}$.
 Instead of $P_{\rho, \nu}$ we will consider also other PPPs on $\bbR^d\times \bbM$, hence not necessarily with intensity measure $\rho\cL\otimes \nu$.

 While $G(\o, \underline u)$ is a fixed graph for $(\o,\underline u)\in \O\times [0,1]^J$, we will use $G(\z)$ to denote a random graph as follows.
 Given a locally finite set of points $\z\subset \bbR^{d}\times\bbM$ (possibly random) we write $\cG(\z)$ for the RCM on $\z$, i.e.~the random graph  with vertex set $\z$ and edge set obtained by putting an edge  between two vertices $t\not= t'$ in $\z$ with probability $\varphi (t,t')$ independently  from the other pairs of vertices in $\z$.

 \subsection{FKG inequality}\label{sec_FKG}
 To benefit of the FKG inequality we need to switch to a different space.  As in \cite{Last17}, we define $(\bbR^d\times\bbM)^{[2]}$ as the set $\{ A\subset \bbR^d\times \bbM\,:\, \sharp A=2\}$.
When equipped with the Hausdorff metric this set is a Borel subset of a complete separable metric space \cite{Last17}. 
We set $\bbW:=(\bbR^d\times\bbM)^{[2]}\times[0,1]$ (trivially, an element of $\bbW$ is a pair $\big( \{t,t'\}, u\big)$ with $t\not =t'$ in $\bbR^d\times \bbM$ and $u\in [0,1]$). 
We then define $\cN(\bbW)$ as the family  of locally finite subsets of $\bbW$. Then $\cN(\bbW)$ is a measurable space with $\s$--algebra of measurable sets generated by the sets $\{ \z\in \cN(\bbW)\,:\, \sharp (\z\cap B) =n\}$ as $B$ varies among the Borel sets of $\bbW$ and $n$ varies in $\bbN$.

On $\bbW$, we define the partial order $\leq$  as follows: $\z\leq\z'$ if  for any $(\{t,t'\},u)\in\z$ it holds
$(\{t,t'\},u')\in\z'$ for some $u'\in[0,u]$.
A measurable function  $f:\cN(\bbW)\to\bbR$ is  called \emph{increasing}  if  $f(\z)\leq f(\z')$ whenever   $\z\leq \z'$. An event  $A$  in $\cN(\bbW)$ is called increasing if $\1_A$ is increasing.

Consider the map
\[ \Psi:\O \times [0,1]^J\to\cN(\bbW):  (\o,\underline{u} ) \mapsto 
\Big\{ \big(\{t_i, t_j\}, u_{i,j}\big): (i,j)\in J,\;j \leq \sharp\o \Big\}\,,
\]
where $\o=\{t_1,t_2,\dots\}$. 
\begin{Lemma}(FKG inequality) \label{lemma_FKG} 
The image $P$ of $P_{\rho,\nu}\otimes Q$ by $\Psi$ satisfies the FKG inequality w.r.t.\ the partial order $\leq $  on $\cN(\bbW)$: writing $E[\cdot]$ for the expectation w.r.t.~$P$, it holds
\begin{equation} \label{eq:FKG}
 E[fg]\geq E[f]E[g],\end{equation}
for all  bounded increasing functions  $f,g:\cN(\bbW)\to\bbR$.
\end{Lemma}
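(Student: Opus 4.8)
\textbf{Proof proposal for the FKG inequality (Lemma~\ref{lemma_FKG}).}

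The plan is to deduce the FKG inequality for $P$ from the Poisson FKG inequality (the Harris--FKG inequality for Poisson point processes, see e.g.~\cite{Last17} or \cite{Last18}*{Theorem~20.4}), by realising $P$ as the distribution of an increasing functional of a Poisson point process on a suitable space. First I would identify the correct ground space: rather than $\bbW=(\bbR^d\times\bbM)^{[2]}\times[0,1]$ itself, it is cleaner to work on $\widetilde\bbW:=(\bbR^d\times\bbM)^{[2]}\times[0,1]$ carrying the intensity measure $\mu$ obtained by pushing forward $\tfrac12(\rho\cL\otimes\nu)^{\otimes 2}\otimes(\md u)$ under the symmetrisation $((t,t'),u)\mapsto(\{t,t'\},u)$, the factor $1/2$ accounting for the unordered pair. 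A standard computation (marking/thinning of a Poisson process, cf.~\cite{Last18}*{Chapter~5}) shows that the point process $\eta$ on $\widetilde\bbW$ with intensity $\mu$ has exactly law $P$: indeed $\eta$ records, for each unordered pair $\{t,t'\}$ of points of the underlying PPP on $\bbR^d\times\bbM$, an independent uniform label $u_{t,t'}$, which is precisely what $\Psi$ produces from $(\o,\underline u)$. One should check here that the enumeration-dependence in the definition of $\Psi$ disappears after projecting to unordered pairs, so that the image measure is well defined and equals the law of $\eta$; this is where \cite{Last18}*{Proposition~6.2} on measurable enumerations is used.

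Next I would invoke the Poisson FKG (Harris) inequality: if $\eta$ is a Poisson point process on a Borel space and $F,G$ are bounded measurable functionals that are increasing with respect to the inclusion partial order on point configurations, then $E[F(\eta)G(\eta)]\ge E[F(\eta)]E[G(\eta)]$. The only point requiring care is that the relevant partial order on $\cN(\bbW)$ is \emph{not} plain inclusion: $\z\le\z'$ allows replacing a point $(\{t,t'\},u)$ by $(\{t,t'\},u')$ with a \emph{smaller} label $u'\le u$. To reconcile the two, I would use the coordinate change $u\mapsto 1-u$ on the $[0,1]$ factor, or equivalently work with the "record of present edges" rather than labels: under $v:=1-u$ the order $\z\le\z'$ becomes "larger configurations in the $v$-coordinate", and a functional increasing for $\le$ on $\cN(\bbW)$ corresponds to a functional that is monotone in the inclusion order of the associated Poisson process once one also observes that increasing the label set can only enlarge the set of realised edges. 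Concretely, any bounded increasing $f:\cN(\bbW)\to\bbR$ in the sense of the paper is a monotone limit of functions depending on finitely many coordinates, and on such functions the monotonicity in $\le$ translates into coordinatewise monotonicity of the form covered by the Poisson FKG inequality of \cite{Last17} (which is in fact stated for exactly this kind of marked structure). Applying that inequality to $f,g$ and transporting back along $\Psi$ yields \eqref{eq:FKG}.

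The main obstacle I anticipate is purely bookkeeping rather than conceptual: carefully matching the ad hoc partial order $\le$ on $\cN(\bbW)$ (which mixes an inclusion-type order on pairs with a reversed order on the $[0,1]$-labels) with the inclusion order on the Poisson realisation to which the classical Harris--FKG inequality applies, and making sure the measurable-enumeration subtleties in $\Psi$ do not obstruct the identification $P=\mathrm{Law}(\eta)$. Once the space $\widetilde\bbW$, the intensity $\mu$, and the label-reversal $u\mapsto 1-u$ are set up correctly, the inequality follows from the off-the-shelf statement in \cite{Last17}; no further probabilistic input is needed. A reasonable alternative, avoiding the label-reversal discussion altogether, is to cite directly the version of the FKG inequality for the random connection model proved in \cite{Last17} (or its refinement in \cite{Last18}), since that reference already establishes \eqref{eq:FKG} for precisely the increasing events of the RCM in the sense used here; I would present the self-contained deduction above but remark that it also follows verbatim from \cite{Last17}.
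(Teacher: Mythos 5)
The central step of your self-contained argument is false: the image $P$ of $P_{\rho,\nu}\otimes Q$ under $\Psi$ is \emph{not} the law of a Poisson point process on $\bbW$. The points of $\Psi(\omega,\underline u)$ are all two-element subsets $\{t,t'\}$ of a \emph{single} underlying PPP $\omega$, together with their labels. Pairs sharing a common point of $\omega$ are strongly dependent (they appear or disappear together with that point), and even the counting statistics are wrong: the number of pairs with both endpoints in a bounded window is $\binom{n}{2}$ with $n$ Poisson, which is not Poisson distributed. So the marking/thinning computation you invoke does not apply---marking attaches independent labels to the points of a given Poisson process, it does not turn the collection of \emph{pairs} of those points into a Poisson process on $(\bbR^d\times\bbM)^{[2]}\times[0,1]$---and the Harris--FKG inequality for Poisson processes cannot be applied to $P$ directly. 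Your label-reversal $u\mapsto 1-u$ is a correct observation about reconciling the order $\le$ with inclusion, but it is moot given this obstruction.

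The paper circumvents the difficulty by a two-stage conditioning: writing $\eta(\z)$ for the underlying point configuration in $\bbR^d\times\bbM$, it first applies the FKG inequality for the i.i.d.\ uniform connection labels conditionally on $\eta$ (both functions are decreasing in the labels), and then applies the Poisson FKG inequality \cite{Last18}*{Theorem~20.4} to the genuine PPP $\eta$, after checking via a coupling that $E[f\mid\eta]$ and $E[g\mid\eta]$ are increasing in $\eta$ with respect to inclusion. Your fallback of quoting the RCM FKG inequality of \cite{Last17} directly is a legitimate alternative in principle, but as presented it is only a remark: your main derivation does not work, and you would still need to verify that the cited statement covers the specific partial order $\le$ on $\cN(\bbW)$ used here.
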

\begin{proof}
Given $\z\in\cN(\bbW)$ we set $\eta=\eta(\z):=\{ t\in \bbR^d\times\bbM\,:\, (\{t,t'\},u)\in \z \text{ for some } t',u\}$. Then, as explained below, we have
\[
E[fg]=E\big[E[fg|\eta]\big]\geq E\big[E[f|\eta]E[g|\eta]\big]\geq E\big[E[f|\eta]\big] E\big[E[g|\eta]\big]=E[f]E[g]\,.
\]
Indeed, the first inequality follows from the FKG inequality for independent uniform random variables with the partial order naturally induced by the standard order on real numbers ($f$ and $g$ are both decreasing functions of the connection variables $u$).  The second inequality follows from the FKG inequality for PPPs (cf.~\cite{Last18}*{Theorem~20.4}) as  $E[f|\eta]$ and $E[g|\eta]$ are both increasing functions of $\eta$ w.r.t.\ inclusion (see below), while under $P$ $\eta$ is a PPP with law $P_{\rho,\nu}$. To check the last monotonicity, we observe that, given $\eta,\eta'\in \O$ with $\eta\subset\eta'$, we can realize a coupling between the connection variables associated to $\eta$ and the ones associated to $\eta'$ so that for all $x\not =y$ in $\eta\subset\eta'$ the connection variable for the pair $\{x,y\}$ is the same for $\eta$ and for $\eta'$. Then, the corresponding configurations in $\cN(\bbW)$ are in increasing order w.r.t.~$\leq $ (where each configuration is given by the collections of triples $(\{x,y\}, u)$, $u$ being the connection variable associated to $\{x,y\}$). The same order is then maintained when applying $f$ or $g$.
\end{proof}

\subsection{Irreducibility}\label{subsec:free:ass} 
In this section we show that to prove Theorem \ref{teo_crossings} we can assume w.l.o.g.\ that
\begin{equation}
    \label{eq:supercritical}
    \nu\Big(\Big\{m\in\bbM:\bbP\big((0,m)\leftrightarrow \infty\text{ in } G\big(\bbX \cup  \{(0,m)\}\big)\big)>0\Big\}\Big)=1
    \end{equation}
    where $\bbX$ is a marked PPP on $\bbR^d\times\bbM$ with law $P_{\l,\nu}$. We point out that \eqref{eq:supercritical} does not follow from Assumptions~\ref{assumere}. Indeed, it is easy to exhibit a counterexample e.g.\ by means of Mott v.r.h.\ with cutoff introduced in Section~\ref{sec:Mott:vrh}.

\begin{Lemma} 
\label{lem:supercritical:assumption}
Let
\begin{equation}
\label{eq:def:A}
A:= \Big\{m\in\bbM:\bbP\Big((0,m)\leftrightarrow \infty\text{ in } G\big(\bbX \cup  \{(0,m)\}\big)\Big)=0\Big\}\,.
\end{equation}
For $P_{\l,\nu}\otimes\bbQ$--a.a.~$(\o,\underline u)$ the following holds: for each $(x,m)\in \o$ with $m\in A$ the cluster of $(x,m)$ in $G(\o,\underline u)$ is finite.
\end{Lemma}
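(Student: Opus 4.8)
The plan is to deduce the finiteness of the cluster of any point with a ``bad'' mark $m\in A$ directly from the definition of $A$ and a Mecke-type computation, together with the fact that the cluster of a point depends on $(\o,\underline u)$ only through the points ``reachable'' from it. First I would recall that, by the Mecke equation for the marked PPP (cf.~\cite{Last18}*{Chapter~4}), it suffices to show that, for $\nu$--a.a.~$m\in A$, the probability that a point added at the origin with mark $m$ lies in an infinite cluster is zero; this is exactly the statement $m\in A$, so the content is really just the passage from ``one added point'' to ``every point of the configuration'' via Mecke, combined with a union bound over a countable exhaustion of $\bbR^d\times\bbM$ by bounded sets and over the (countably many) points of $\o$.

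More precisely, the key steps in order are: (i) Fix a countable family of bounded Borel sets $B_k\subset\bbR^d\times\bbM$ exhausting $\bbR^d\times\bbM$. For each $k$, consider the expected number of points $(x,m)\in\o\cap B_k$ with $m\in A$ whose cluster in $G(\o,\underline u)$ is infinite; by the Mecke formula applied to the PPP on $\bbR^d\times\bbM$ with intensity $\l\,\cL\otimes\nu$, this expectation equals
\be
\l\int_{B_k}\1\{m\in A\}\;\bbP\Big((x,m)\leftrightarrow\infty\text{ in }G\big(\bbX\cup\{(x,m)\}\big)\Big)\,(\cL\otimes\nu)(\md x,\md m)\,,
\en
where inside the integral $\bbX$ has law $P_{\l,\nu}$ and $(x,m)$ is the extra deterministic point. (ii) Use stationarity (Assumption~\ref{assumere}\ref{ass:stationary}), which makes the law of the graph on $\bbX\cup\{(x,m)\}$ translation covariant, to reduce the connection probability at $(x,m)$ to the one at $(0,m)$; by the very definition of $A$ in~\eqref{eq:def:A}, this probability is $0$ for every $x$ once $m\in A$. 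Hence the integrand vanishes $\cL\otimes\nu$--a.e.\ on $\{m\in A\}$, so the expectation in~(i) is $0$ for every $k$. (iii) Since a non-negative integer-valued random variable with zero expectation is a.s.\ zero, for each $k$ there is $P_{\l,\nu}\otimes Q$--a.s.\ no point $(x,m)\in\o\cap B_k$ with $m\in A$ in an infinite cluster; taking the (countable) union over $k$ gives the claim for all points of $\o$ simultaneously.

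One technical point that needs care, rather than a genuine obstacle, is the measurability of the event ``the cluster of $(x,m)$ in $G(\o,\underline u)$ is infinite'' as a function of $(x,m,\o,\underline u)$, so that the Mecke formula applies; this follows from the measurable enumeration of points (\cite{Last18}*{Proposition~6.2}) already used to define $G(\o,\underline u)$, writing the infinite-cluster event as a countable combination of the events ``there is a path of length $n$ from $(x,m)$ to a point outside $\L_R$'' over $n,R\in\bbN$. A second mild subtlety is that in~\eqref{eq:def:A} the added point sits at the origin while in the Mecke computation it sits at a generic $x$; stationarity of $\varphi$ handles this, since the distribution of the PPP is translation invariant and the connection function depends only on differences, so the cluster of $(x,m)$ in $G(\bbX\cup\{(x,m)\})$ has the same law as the cluster of $(0,m)$ in $G(\bbX\cup\{(0,m)\})$. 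The main ``obstacle'', such as it is, is simply to organise these measurability and Mecke bookkeeping details cleanly; the probabilistic heart of the matter is the tautology that points with marks in $A$ cannot, by definition, be in an infinite cluster after adding them to an independent copy of the environment, which is morally the whole proof.
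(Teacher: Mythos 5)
Your proposal is correct and follows essentially the same route as the paper: the paper applies Campbell's formula together with the explicit description of the Palm distribution of the marked PPP (the point sitting at the origin with an independent $\nu$-distributed mark), which for a Poisson process is precisely the Mecke identity you invoke, and then concludes exactly as you do by exhausting $\bbR^d$ with bounded boxes. The only cosmetic difference is that the Palm formulation places the added point at the origin from the start, so the translation-invariance step you carry out explicitly is absorbed into the statement of the formula.
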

\begin{proof}
   We recall that the Palm distribution associated to the marked PPP with law $P_{\l,\nu}$ is the law of the  marked point process $\bbX\cup\{(0,M)\}$ where  $\bbX$ is a marked PPP with law $P_{\l,\nu}$  and  $M$ is an independent  random variable  with law $\nu$. 
  We define  $\cA$ as   the set of $(\o,\underline u)\in \O_*\times[0,1]^J$ such that $(0,m)\in \hat \o$, $m\in A$ and the cluster of $(0,m)$ in $G(\o,\underline u)$ is infinite. Fix $L>0$ and let $\Lambda=[-L,L]^d$. By Campbell's formula (cf.~\cite{Daley88}*{Eq.~(12.2.4)}  or \cite{Franken82}*{Theorem~1.2.8} for marked point processes) applied to the function $f(x, (\o, \underline u))= \1_{\L} (x) \1_{\cA
  }( \o, \underline u) $ and by the above characterization of the Palm distribution we have 
  \[ \int _{A} d\nu(m) \bbP\Big(  (0,m)\leftrightarrow \infty \text{ in } G \big(\bbX\cup\{(0,m)\}\big)  \Big)=\frac{1}{\l (2L)^d}
  \bbE\Big[ \sum_{ \substack{(x,m_x)\in \bbX :\\ 
  x\in \L, m_x\in A}} \1\big(x \leftrightarrow 
 \infty \text{ in } G(\bbX) \big)\Big]\,.
  \]
  Since, by definition of $A$, the l.h.s.\ above is zero, from the above identity we get that $\bbP$--a.s.\ $x \not\leftrightarrow 
 \infty \text{ in } G(\bbX)$ for any  $(x,m_x)\in \bbX $ with $x\in \L$ and $m_x\in A$. By  varying $L>0$, we get our claim.
\end{proof}

By Lemma~\ref{lem:supercritical:assumption}, given a marked PPP $\bbX$ with law $P_{\l,\nu}$,  if we remove  from $G(\bbX)$  all vertices  $(x,m_x)$ with $m_x\in A$ and the associated edges, the resulting graph has the same infinite clusters as $G(\bbX)$. By Assumption~\ref{assumere}\ref{ass:supercritical}, we conclude that $\nu(A)<1$ with $A$ from \eqref{eq:def:A}. Set $\lambda'=\lambda\nu(A^c)$ and $\nu':=\nu(\cdot|A^c)$. Let us verify that the triple $(\l',\nu',\varphi)$ satisfies  Assumption~\ref{assumere}~\ref{ass:irreducible} and \ref{ass:supercritical}.  The graph obtained above by removing vertices  $(x,m_x)$ with $m_x\in A$ and projected on $\bbR^d$ is just the RCM$(\l',\nu', \varphi)$ and, by Assumption~\ref{assumere}\ref{ass:supercritical}, it has an unbounded connected component $P_{\l',\nu'}\otimes Q$--a.s. Hence, the triple $(\l',\nu',\varphi)$ satisfies Assumption~\ref{assumere}~\ref{ass:supercritical}. 
 
As discussed in Section~\ref{sec_model_results}, in order to verify Assumption~\ref{assumere}\ref{ass:irreducible} for $\lambda',\nu'$, by \cite{Chebunin24}, it suffices to verify \eqref{eq:irreducible} for the new parameters $\l',\nu'$.
  Fix $(t,t')\in(\bbR^d\times A^c)^2$ with $t\neq t'$. Then, by \eqref{eq:def:A} and translation invariance, $\bbP(t\leftrightarrow\infty\text{ in }G(\bbX\cup\{t\}))>0$ and similarly for $t'$. By the FKG inequality, this gives that $\bbP(t\leftrightarrow\infty,t'\leftrightarrow\infty\text{ in }G(\bbX\cup\{t, t'\}))>0$ and, by Assumption~\ref{assumere}\ref{ass:irreducible}, we obtain 
 \be\label{aereo}
 \bbP(t\leftrightarrow t'\leftrightarrow \infty \text{ in }G(\bbX\cup\{t,t'\}))>0\qquad \forall (t,t')\in(\bbR^d\times A^c)^2\,.
 \en
Given a locally finite subset $\eta$ in  $\bbR^d\times\bbM$ and given $t\not=t'$ in  $\eta$, we define 
\[f(t,t',\eta):=\1(t,t'\in\bbR^d\times A^c)\bbP 
\left(
\bbR^d\times A  \leftrightarrow t\leftrightarrow t'\leftrightarrow \infty \text{ in }G(\eta)\right)\,.\]
Above the probability is w.r.t.\ the connection variables. Thanks to Lemma~\ref{lem:supercritical:assumption}, we have
\[\bbE\Big[\sum_{t\in \bbX}\sum_{t'\in\bbX:t\not=t'} f(t,t',\bbX)\Big]=0\]
(we stress that now the expectation is w.r.t.\ both the PPP $\bbX$ and the connection variables). By the multivariate Mecke equation (cf.~\cite{Last18}*{Theorem~4.4}), we conclude that  for $(\l\cL\otimes \nu)^{\otimes 2}$ a.a.~$(t,t')\in (\bbR^d\times A^c)^2$ it holds $\bbE[f(t,t',\bbX\cup\{t,t'\})]=0$, which can be rewritten as $\bbP\left(
\bbR^d\times A  \leftrightarrow t\leftrightarrow t'\leftrightarrow \infty \text{ in }G(\bbX\cup\{t,t'\})\right)=0$. Combining this result with \eqref{aereo}, we get $\bbP(t\leftrightarrow t'\text{ in }G((\bbX\setminus(\bbR^d\times A))\cup\{t,t'\}))>0$  for $(\l\cL\otimes \nu)^{\otimes 2}$--a.a.~$(t,t')\in (\bbR^d\times A^c)^2$. The last property is equivalent to \eqref{eq:irreducible} for $\l',\nu'$.

We have proved that  the triple $(\l',\nu',\varphi)$ satisfies Assumption~\ref{assumere}. Clearly,   \eqref{eq:supercritical} is valid  with $\nu$ and $\bbX$ replaced by $\nu'$ and a marked PPP with law $P_{\l',\nu'}$, respectively. Since 
RCM$(\l',\nu',\varphi)$ can be thought of as a subgraph of RCM$(\l,\nu,\varphi)$, this completes the reduction of Theorem~\ref{teo_crossings}, showing that we can assume \eqref{eq:supercritical} w.l.o.g.

\subsection{Additional notation}
\label{subsec:notation}
We conclude with some additional notation and conventions used throughout this work. Since in the next sections $\rho$ will denote a generic density, we will prove Theorem~\ref{teo_crossings} for $\l'>\l$ instead of $\rho>\l$.
\begin{itemize}
\item We fix a triple $(\lambda,\varphi,\nu)$ satisfying Assumptions~\ref{assumere} and \eqref{eq:supercritical}. We also fix 
$\lambda'>\lambda$.
\item W.l.o.g.\ we assume ${\ell}_*:=1$ in Assumption~\ref{assumere}\ref{ass:finite:support}.
\item We set $\L_r=[-r,r]^d$ and  $\L_r(x):=x+ \L_r$, for $r\ge 0$ and $x\in\bbR^d$. 
\item We set $\partial_*\L_r:=\{ x\in \bbR^d\,:\, |x|_\infty \in [r-1,r]\}$ and  $\partial_* \L_r(x):=x+\partial_*\L_r$, for $r\ge 1$.
\item For $r\ge 1$, $\sigma\in\{-1,1\}^d$ and $i\in\{1,\dots,d\}$, we define the \emph{quarter-face}
\begin{equation}\label{eq:def:F}
F_r^{i,\sigma}=\left\{(x_1,\dots,x_d)\in\bbR^d:\sigma_i x_i\in[r-1,r]\text{ and }\sigma_j x_j\in[0,r]\text{ for }j\in\{1,\dots,d\}\setminus \{i\}\right\},
\end{equation}
omitting $i,\sigma$, if $i=1$ and $\sigma=(1,\dots,1)$.
\end{itemize}

\section{Connections, seeds and sprinkling}\label{sec_connections}

In view of Definition~\ref{def_G}, we can assume (here and in what follows) that $\varphi(t,t)=0$ for any $t\in \bbR^d\times\bbM$. Set 
$\bar\varphi (t,t'):=1-\varphi(t,t')$ for all $t,t'\in \bbR^d\times\bbM$. Given a finite set $C\subset \bbR^{d}\times\bbM$ and given $t\in \bbR^{d}\times\bbM$ we define
 $\bar \varphi (t,C):=\prod _{t'\in C}\bar \varphi (t,t')$. Note that $\bar \varphi(t,C)=0$ for all $t\in C$.  The probabilistic interpretation of $\bar\varphi(t,C)$ is the following. Consider independent Bernoulli random variables parametrized by the pairs $(t,c)$ with $c\in C$, where the $(t,c)$-random variable equals $1$ with probability $\varphi(t,c) $. Then $\bar\varphi(t,C)$ is the probability that all these random variables are zero.
  
The next lemma allows creating connections with the help of sprinkling (changing the intensity of the PPP). The core idea behind it goes back to the classical work \cite{Aizenman83}.

\begin{Lemma}[Sprinkling] \label{lemma_udine} 
Fix a finite set $A\subset \bbR^d\times\bbM$ and 
Borel sets $B\subset R\subset\bbR^{d}\times\bbM$ with $B$ bounded. Let $\rho, \d>0$ and let $\bbX,\tilde\bbX,\bbY$ be PPPs on $R$ with intensity measures $\rho \mu(\md t)$, $\d\mu(\md t)$ and  $\rho \bar\varphi(t,A) \mu(\md t)$, respectively, where $\mu$ is the restriction of $\cL\otimes \nu$ to $R$.  We suppose that $\tilde \bbX$ is independent from $\bbY$.  
Then, for any $u\geq 0$,
\begin{equation}
\label{fiorellino} \bbP\left(A\leftrightarrow B\text{ in }G\big(A,\tilde\bbX\big)\cup G\big(\tilde\bbX\cup\bbY\big)\right) \geq 
(1- e^{-\d u}) \Big [ 1- e^{\rho u} \bbP\Big( A\not \leftrightarrow B  \text{ in } G\big(\bbX\cup A\big) \Big) \Big]\,,
\end{equation}
where $G(A,\tilde\bbX)$ denotes 
the bipartite RCM with connection function $\varphi$, i.e.\ the graph with vertex set $A\cup \tilde \bbX$ where an edge $\{t,t'\}$ is created with probability $\varphi(t,t')$ independently when   varying $t\in A$ and $t'\in\tilde\bbX$.
\end{Lemma}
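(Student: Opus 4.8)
\medskip
\noindent\textbf{Proof plan.}

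The plan is to run a sprinkling argument in the spirit of \cite{Aizenman83}: deleting all points of $\bbX$ that fall in a suitable bounded region $D$ with $\mu(D)=u$ costs only a factor $e^{\rho u}$ in the probability of \emph{dis}connecting $A$ from $B$, whereas re-inserting a single point of $D$ through the extra process $\tilde\bbX$ succeeds with probability $1-e^{-\delta u}$, and such a point can be arranged to bridge $A$ to whatever $B$-cluster survives the deletion; the two exponents match precisely because $D$ is chosen (roughly) so that $\int_D\rho(1-\bar\varphi(t,A))\,\mu(\md t)=\rho u$.

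I would first dispose of the trivial case $A\cap B\neq\emptyset$ (both connection events then hold surely), so assume $A\cap B=\emptyset$, and realize all RCM graphs $G(S)$ on a common field of connection variables, so that $G(S')$ is the induced subgraph of $G(S)$ whenever $S'\subseteq S$. The first genuine step is the deletion estimate: for any Borel $D\subseteq R$ the event $\{A\not\leftrightarrow B$ in $G((\bbX\setminus D)\cup A)\}$ depends only on $\bbX\setminus D$, on $A$, and on the connection variables among those vertices, hence is independent of $\bbX\cap D$; and on $\{\bbX\cap D=\emptyset\}$ it coincides with $\{A\not\leftrightarrow B$ in $G(\bbX\cup A)\}$. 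Conditioning on this last event of $\bbX$ in $D$ (probability $e^{-\rho\mu(D)}$) yields
\[
\bbP\bigl(A\not\leftrightarrow B\text{ in }G((\bbX\setminus D)\cup A)\bigr)\le e^{\rho\mu(D)}\,\bbP\bigl(A\not\leftrightarrow B\text{ in }G(\bbX\cup A)\bigr).
\]
I would then take $D$ bounded with $\mu(D)=u$ and containing $\{t\in R:\rho(1-\bar\varphi(t,A))>\delta\}$, which is legitimate since this set is $\mu$-finite, being contained in the spatial $\ell_*$-neighbourhood of $\pi(A)$ times $\bbM$; when $u$ is too small for such a $D$ to exist, the right-hand side of \eqref{fiorellino} is non-positive up to an easy boundary case, so nothing is lost.

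Next I would couple the ``bulk''. Since $\tilde\bbX\cup\bbY$ is a PPP of intensity $\bigl(\delta+\rho\bar\varphi(t,A)\bigr)\mu(\md t)$, which on $D^c$ is at least $\rho\,\mu(\md t)$ (there $\rho(1-\bar\varphi(t,A))\le\delta$ by the choice of $D$), I can couple so that $\bbX\setminus D$ is, location-preservingly, a sub-point-process of $\tilde\bbX\cup\bbY$, whence $G(\bbX\setminus D)$ is a subgraph of $G(\tilde\bbX\cup\bbY)$ and the $B$-connected cluster of the former lies inside that of the latter. On the event $\{A\leftrightarrow B$ in $G((\bbX\setminus D)\cup A)\}$ --- of probability at least $1-e^{\rho u}\,\bbP(A\not\leftrightarrow B$ in $G(\bbX\cup A))$ by the deletion estimate --- there is a point $x^{\ast}\in\bbX\setminus D$ adjacent to some $a^{\ast}\in A$ and connected to $B$ in $G(\bbX\setminus D)$, hence connected to $B$ in $G(\tilde\bbX\cup\bbY)$ under the coupling. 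The remaining task is to produce, through $\tilde\bbX$, a point $\tilde x\in\tilde\bbX\cap D$ adjacent to $A$ (via the bipartite edges of $G(A,\tilde\bbX)$) and to that surviving cluster; since $D$ is a neighbourhood of $\pi(A)$ of controlled size, the conditional $\mu$-mass of admissible locations in $D$ is bounded below by a quantity comparable to $u$, so conditionally such a $\tilde x$ exists with probability at least $1-e^{-\delta u}$. As $\tilde\bbX\cap D$ and its connection variables to $A$ are independent of all the data used above, multiplying the two lower bounds gives \eqref{fiorellino}.

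The step I expect to be the main obstacle is this last bridging argument. When $\delta<\rho$ the process $\tilde\bbX$ produces points adjacent to $A$ at a strictly smaller rate than $\bbX$, so one cannot simply transport $A$-connectivity through the bulk coupling; one must genuinely show that a \emph{single} sprinkled point suffices, which forces controlling where the surviving $B$-cluster comes close to $A$ and choosing $D$ (and verifying the lower bound on the mass of good bridge locations) accordingly, while carefully separating the randomness that determines $x^{\ast}$ and the surviving cluster from the fresh randomness of $\tilde\bbX\cap D$ and its edges to $A$. The remaining manipulations are routine Poisson thinning and superposition together with the Mecke/Campbell machinery already in place.
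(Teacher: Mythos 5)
There is a genuine gap at the step you yourself flag as the main obstacle, and it is not repairable within your framework. To extract the factor $1-e^{-\delta u}$ you need the set of points of $\tilde\bbX\cap D$ that are simultaneously adjacent to $A$ (via the bipartite edges) and adjacent to the surviving $B$-cluster $\cW'$ to be nonempty with conditional probability at least $1-e^{-\delta u}$; that set is a PPP with intensity $\delta(1-\bar\varphi(t,A))(1-\bar\varphi(t,\cW'))\1_D(t)\mu(\md t)$, so you need $\int_D(1-\bar\varphi(t,A))(1-\bar\varphi(t,\cW'))\,\mu(\md t)\ge u$. Since the integrand is at most $1$ and $\mu(D)=u$ (taking $D$ any larger destroys the deletion factor $e^{\rho u}$), this forces the integrand to equal $1$ a.e.\ on $D$, which fails for every non-trivial $\varphi$: nothing guarantees that $\cW'$ approaches $A$ anywhere except at the single point $x^*$, and even near $x^*$ the connection probabilities need not be bounded below (e.g.\ in the Mott model a mark near $\zeta/2$ admits only a tiny set of possible neighbours). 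So your scheme is structurally incapable of producing the stated constants, not merely delicate. A secondary problem is the small-$u$ case: when $\delta<\rho$ and $u<\mu(\{t\in R:\rho(1-\bar\varphi(t,A))>\delta\})$ no admissible $D$ exists, yet the right-hand side of \eqref{fiorellino} is strictly positive whenever $\bbP(A\not\leftrightarrow B\text{ in }G(\bbX\cup A))<e^{-\rho u}$, so that case is not vacuous as you claim.

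The paper's proof chooses no region $D$ at all. It splits $\bbX$ into the points adjacent to $A$ and the rest (identified with $\bbY$), sets $\cW:=\{t\in\bbY:t\leftrightarrow B\text{ in }G(\bbY)\}$, and proves the exact identity $\bbP(A\not\leftrightarrow B\text{ in }G(\bbX\cup A))=\bbE[e^{-\rho Z(\cW)}]$, where $Z(W):=\int_R\big(1-\bar\varphi(t,W)\1_{R\setminus B}(t)\big)(1-\bar\varphi(t,A))\,\mu(\md t)$ is precisely the \emph{random} mass of admissible bridge locations. The sprinkled connection probability is then shown to be at least $1-\bbE[e^{-\delta Z(\cW)}]$, and two applications of Markov's inequality ($\bbP(Z(\cW)\le u)\le e^{\rho u}\,\bbE[e^{-\rho Z(\cW)}]$) yield \eqref{fiorellino}. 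In other words, the bridging mass is never lower-bounded deterministically: the whole point is that if $Z(\cW)$ were typically small, then $A\not\leftrightarrow B$ would already be likely in the original process, and this is exactly the quantity appearing on the right-hand side. To close your proof you would need to replace the deterministic choice of $D$ and the bulk coupling by this probabilistic identity.
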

   
\begin{proof} In what follows, to shorten, we will write PPP$[\mathfrak{m}]$ for a PPP  with intensity measure $\mathfrak{m}$. If $A\cap B\not= \varnothing$, then \eqref{fiorellino} is trivially true. Let us then restrict to $A\cap B=\varnothing$. Given a realization of $\bbX$, a point $t\in \bbX$ is adjacent to $A$ in $G(\bbX\cup A)$  with probability  $1-\bar \varphi(t,A)$ (the probability is w.r.t.\ the connection random variables) and this takes place independently for each point $t\in \bbX$. Hence, if we set
$\xi:=\{t\in \bbX \,:\, t\not\sim A \text{ in } G(\bbX\cup A) \}$ and $\eta:=\{t\in \bbX \,:\, t\sim A\text{ in } G(\bbX\cup A)\}$, we get that $\xi$ and $\eta$ are independent, $\xi\sim\text{PPP}[\rho \bar \varphi (t, A) \mu(\md t) ]$ and $\eta\sim\text{PPP}[n(\md t)]$, where  $n(\md t):=\rho ( 1- \bar \varphi (t,A) ) \mu(\md t)$. Since, in \eqref{fiorellino}, we may assume that $\bbX$ and $\tilde\bbX$ are independent, we may set $\bbY=\xi$ and $G(\bbY)$ to be the restriction of $G(\bbX)$ to the vertex set $\xi$.

We define
\[ \cW:=\{ t\in \bbY\,:\, t\leftrightarrow B \text{ in } G(\bbY)\}.\] 
We observe that (see the  comments below)
\be\label{praga}
\begin{split}
&\bbP\left(A\not\leftrightarrow B\text{ in }G(\bbX\cup A)\right)=\bbP\left(B\cap\eta=\varnothing,\not\exists t\in \eta:t\sim \cW\text{ in }G(\bbX)\right)\\
&=\bbE\left[\bbP(B\cap \eta=\varnothing,\not\exists t\in\eta:t\sim\cW\text{ in }G(\{t\}\cup\cW))|\cW)\right]=
\bbE\left[e^{-\rho Z(\cW)}\right],
\end{split}
\en
where, for $W\subset R$ finite, we set
\[Z(W)=
 \int_R\Big( 1-\bar\varphi(t,W)
\1_{R\setminus B }(t) \Big)  ( 1- \bar \varphi (t,A) ) \mu(\md t).\]
We explain the last identity in  \eqref{praga}. The set $\cW$ is  independent from $\eta$. Hence,  we can take as  version of the conditional probability in the second line of \eqref{praga}  the function given by $p:=\bbP(B\cap \eta=\varnothing,\not{\exists} t\in\eta:t\sim W\text{ in }G(\{t\}\cup W))$ on the event $\{\cW=W\}$. To compute $p$ we   write 
$p=\bbP\big( (\eta\cap B)\cup \eta' = \varnothing\big)$ where $\eta':=\{t\in \eta\setminus B: t\sim W \text{ in }G(\{t\}\cup W 
)\}$.
On the other hand, $\eta\cap B$ and $\eta'$ are independent,  $\eta\cap B\sim\text{PPP}[\1_B (t) n(\md t)]$ and $\eta'\sim\text{PPP}[(1-\bar\varphi(t,W)) 
\1_{R\setminus B }(t) n(\md t)]$. Hence, their union $(\eta\cap B)\cup \eta'$ is a $\text{PPP}[\rho \hat n(\md t)]$, where
  $ \hat n(\md t):=  \big( 1-\bar\varphi(t,W)
\1_{R\setminus B }(t) \big)  ( 1- \bar \varphi (t,A) ) \mu(\md t)$. This implies that 
$p=\bbP\big( (\eta\cap B)\cup \eta'= \varnothing\big)= e^{- \rho\int_R \hat n (\md t)}= e^{-\rho Z(W)}$, thus leading to \eqref{praga}.

We now claim that 
\be\label{budapest}
\bbP\left(A\leftrightarrow \cW\cup B\text{ in }G\big(A,\tilde\bbX\big)\cup G\big(\tilde\bbX\cup\cW\big)\right)\geq 1-\bbE\left[e^{-\delta Z(\cW)}\right]\,.
\en
By conditioning as in \eqref{praga} and using that a.s.\ $\bbY$ (and therefore $\cW$) is disjoint from $A$, \eqref{budapest} follows from the observation that, given a finite $W\subset R$ disjoint from $A$, it holds that
\begin{multline}\label{ritardo}\bbP\left(A\leftrightarrow W\cup B\text{ in }G\big(A, \tilde\bbX\big)\cup G\big(\tilde\bbX\cup W\big)\right)\\
\ge \bbP\left(\exists t\in\tilde \bbX:t\sim A\text{ in }G(\{t\}\cup A)\text{ and }\left(t\in B\text{ or }t\sim W\text{ in }G(\{t\}\cup W)\right)\right)=1-\bbE\left[e^{-\delta Z(W)}\right].\end{multline}
Let us justify  the last identity. Since $\tilde \bbX\sim\text{PPP}[\d \mu(\md t)]$,
the set of $t\in \tilde \bbX$ satisfying the request in the second line of \eqref{ritardo} is given by the union  of the sets 
\[ \Big\{t\in \tilde\bbX\cap B: t\sim A\text{ in }G(\{t\}\cup A)\Big\},
\quad 
\Big\{t\in \tilde \bbX\setminus  B: t\sim A\text{ in }G(\{t\}\cup A) \text{ and } t\sim W\text{ in }G(\{t\}\cup W)\Big \}\,,
\]
which are  independent and given by  $\text{PPP}[\1_B (t) (1-\bar \varphi (t,A)) \d \mu(\md t)]$ and $\text{PPP}[\1_{R\setminus B }(t) (1-\bar \varphi (t,A)) (1-\bar\varphi(t,W) )\d \mu(\md t)]$, respectively.
From this we conclude that the union is a $\text{PPP}[\d \hat n (\md t)]$.

Finally, applying Markov's inequality twice, for any $u\ge 0$, we get
\begin{equation}\label{airfrance}
\begin{split}
1-\bbE\left[e^{-\delta Z(\cW)}\right]&{}\ge \left(1-e^{-\delta u}\right)\bbP(Z(\cW)> u)\\
&{}=\left(1-e^{-\delta u}\right)\left(1-\bbP(Z(\cW)\le u)\right)\ge \left(1-e^{-\delta u}\right)\left(1-e^{\rho u}\bbE\left[e^{-\rho Z(\cW)}\right]\right).
\end{split}
\end{equation}
Combining these results concludes the proof of the lemma, since
\be\label{musica}\bbP\left(A\leftrightarrow B\text{ in }G\left(A,\tilde\bbX\right)\cup G\left(\tilde\bbX\cup\bbY\right)\right)
\geq
\bbP\left(A\leftrightarrow \cW\cup B\text{ in }G\left(A,\tilde\bbX\right)\cup G\left(\tilde\bbX\cup\cW\right)\right).
\en
Indeed, it is enough to  combine  \eqref{musica}, \eqref{budapest}, \eqref{airfrance} and  finally \eqref{praga} in that order.
\end{proof}

\begin{Definition}[Local uniqueness]
\label{def:local:uniqueness} Fix $r<s-1$ and $x\in\bbR^d$. Let $G=(\omega,E)$ be a random graph with $\omega\in\Omega$. Let $G'$ be the restriction of $G$ to the vertex set $\omega\cap (\Lambda_s(x)\times\bbM)$. We say that $\cA(r,s,x)$ occurs for $G$, if,  
for any two paths $\g,\g'$  in $G'$
from $\L_r(x) \times\bbM$ to $\partial_*\L_s(x)\times\bbM$, there is a path in  $G'$ from a vertex of $\g$ to a vertex of $\g'$. We set $\cA(r,s):=\cA(r,s,0)$.
\end{Definition}
We emphasise that we do not require and will not prove \emph{strong local uniqueness}, namely e.g.\ $\liminf_{N\to\infty}P_{\rho,\nu}\otimes Q( \cA(3N,4N))=1$. If this result were available a priori, it would greatly facilitate the proof, in particular essentially removing the need of Section~\ref{sec_tanemura}, as discussed in Section~\ref{subsec:outline}. However, proving such strong local uniqueness often requires a significant amount of work even after Theorem~\ref{th:slab} is proved. This is why we have opted for the simpler approach presented here.

Before moving on, we need to fix a few scales for the rest of the section. Recall that $\lambda,\lambda',\varphi,\nu,d$ are fixed, so all quantities may depend on them. Further choose 
\begin{equation}
\label{eq:scales}
N\gg n\gg K\gg k\gg1/\varepsilon\gg 1/c\gg 1,
\end{equation}
that is, $c$ is small enough; $\e$ is small enough depending on $c$; $k$ is large enough depending on $c,\varepsilon$; $K$ is large enough depending on $c,\varepsilon,k$ and so on. The precise way to choose these scales is detailed in the proofs of Lemmas~\ref{lemma_scale},~\ref{lemma_crescita} and~\ref{lem:seed:to:seed} below. We now isolate a subset of good marks $\bbM_g\subset\bbM$ depending on the scale $\e$. Given  a PPP  $\bbX$ on $\bbR^d\times\bbM$ with intensity measure $\l \cL \otimes \nu$  (i.e.~with law $P_{\l,\nu}$), we set
\be\label{set_good_marks}
\bbM_g:=\Big\{\,m\in\bbM\,:\, 
\bbP
   \big (\, (0,m)\leftrightarrow \infty \text{ in } G\big(\bbX\cup  \{(0,m)\}\big)\, \big) \ge \varepsilon \,\Big\}\,.
\en
\begin{Lemma}\label{lemma_scale}  Let $\bbX$ be a  PPP   on $\bbR^d\times\bbM$ with intensity measure $\l \cL \otimes \nu$.
Then, we can assume that $\nu(\bbM_g)>1-c$ and, for the RCM $G(\bbX)$,
\begin{align}
    \label{eq:uniqueness:k}
    &\bbP \big(\cA(k,K)\big)\ge 1-e^{-1/\varepsilon},\\
    \label{eq:uniqueness:n}
    &\bbP \big(\cA(n,N)\big)\ge 1-e^{-1/\varepsilon},\\
  \label{eq:1arm:coarse} 
  &\bbP \big(\Lambda_k\times\bbM \leftrightarrow\partial_*\Lambda_N \times \bbM \text{ in } G(\bbX)\big)\ge 1-e^{-1/\varepsilon}.
\end{align}

\end{Lemma}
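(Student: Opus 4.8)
The plan is to secure each of the four claims by a suitable choice of the scales in \eqref{eq:scales}, fixed in the order prescribed there ($c$, then $\varepsilon$, then $k$, $K$, $n$, $N$); since $\varepsilon$ is fixed before $k,K,n,N$, the quantity $e^{-1/\varepsilon}$ is a fixed positive constant, so each of \eqref{eq:uniqueness:k}--\eqref{eq:1arm:coarse} is only an ``eventually in the scale'' statement. I first dispose of the good-marks claim. Recall that $(\lambda,\varphi,\nu)$ is assumed to satisfy \eqref{eq:supercritical}, so $\nu$-a.e.\ mark $m$ has $\bbP\big((0,m)\leftrightarrow\infty\text{ in }G(\bbX\cup\{(0,m)\})\big)>0$. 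The sets $\bbM_g=\bbM_g(\varepsilon)$ of \eqref{set_good_marks} increase as $\varepsilon\downarrow0$, and their union is exactly the full-$\nu$-measure set appearing in \eqref{eq:supercritical}; hence $\nu(\bbM_g(\varepsilon))\uparrow1$ as $\varepsilon\downarrow0$, and once $c$ is fixed we choose $\varepsilon$ small enough that $\nu(\bbM_g(\varepsilon))>1-c$, consistently with ``$1/\varepsilon\gg1/c$''.

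For the coarse one-arm estimate \eqref{eq:1arm:coarse}: by Assumption~\ref{assumere}\ref{ass:supercritical}, $G(\bbX)$ has $\bbP$-a.s.\ an infinite cluster $\cC_\infty$, whence $\bbP\big(\cC_\infty\cap(\Lambda_k\times\bbM)\neq\varnothing\big)\uparrow1$ as $k\to\infty$ by continuity of measure. Since, by Assumption~\ref{assumere}\ref{ass:finite:support} with $\ell_*=1$, consecutive vertices of any path of $G(\bbX)$ lie at $\ell_\infty$-distance at most $1$, a path of $\cC_\infty$ going from $\Lambda_k\times\bbM$ to infinity must meet $\partial_*\Lambda_N\times\bbM$ as soon as $k\le N-1$; hence $\{\cC_\infty\cap(\Lambda_k\times\bbM)\neq\varnothing\}\subseteq\{\Lambda_k\times\bbM\leftrightarrow\partial_*\Lambda_N\times\bbM\text{ in }G(\bbX)\}$. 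So it suffices to take $k$ large (depending on the already-fixed $\varepsilon$), after which \eqref{eq:1arm:coarse} holds for every $N\ge k+1$.

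The substantive point is local uniqueness, \eqref{eq:uniqueness:k}--\eqref{eq:uniqueness:n}. By Assumptions~\ref{assumere}\ref{ass:irreducible} and~\ref{ass:supercritical}, $\cC_\infty$ is $\bbP$-a.s.\ unique. Fix $r\ge1$; by Definition~\ref{def:local:uniqueness}, $\cA(r,s)^c$ is the event that the restriction of $G(\bbX)$ to $\Lambda_s\times\bbM$ has two distinct clusters $\cC_1\neq\cC_2$ each meeting both $\Lambda_r\times\bbM$ and $\partial_*\Lambda_s\times\bbM$. If some $\cC_i$ met $\cC_\infty$, then, being connected in $G(\bbX)$ and meeting $\Lambda_r\times\bbM$, it would contain a vertex of the a.s.\ finite set $\bbX\cap(\Lambda_r\times\bbM)\cap\cC_\infty$; by uniqueness all vertices of this set belong to one cluster of $G(\bbX)$, hence are pairwise joined within some a.s.\ finite random radius $s_0=s_0(\omega)$, so for $s\ge s_0$ this would force $\cC_1=\cC_2$, a contradiction. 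Therefore, on $\cA(r,s)^c\cap\{s_0\le s\}$, some $\cC_i$ avoids $\cC_\infty$ and thus lies in a finite cluster of $G(\bbX)$ meeting both $\Lambda_r\times\bbM$ and $\partial_*\Lambda_s\times\bbM$. Consequently
\[
\bbP\big(\cA(r,s)^c\big)\le\bbP(s_0>s)+\bbE\Big[\sharp\big\{t\in\bbX\cap(\Lambda_r\times\bbM):\text{the cluster of }t\text{ in }G(\bbX)\text{ is finite and meets }\partial_*\Lambda_s\times\bbM\big\}\Big],
\]
and both terms tend to $0$ as $s\to\infty$ with $r$ fixed: the first because $s_0<\infty$ a.s., the second by the Mecke equation together with dominated convergence (a finite cluster is bounded and $\nu$ is a probability measure). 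Thus $\bbP(\cA(r,s))\to1$ as $s\to\infty$ for every fixed $r$. Applying this with $r=k$ (the value fixed above, enlarged if need be) yields some $K>k$ with \eqref{eq:uniqueness:k}, and then with $r=n>K$ (taken as large as Lemmas~\ref{lemma_crescita} and~\ref{lem:seed:to:seed} will require) yields $N>n$ with \eqref{eq:uniqueness:n}, in both cases with error $e^{-1/\varepsilon}$; this is compatible with \eqref{eq:scales}.

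I expect no genuinely hard step here: the lemma is ``soft'' and only fixes how much room the later dynamical renormalisation needs. The most delicate point is the second term in the display above---making the Mecke/Palm computation rigorous in the marked continuum setting---and, conceptually, the fact that it is exactly uniqueness of the infinite cluster (verified for concrete models via the Chebunin--Last criterion discussed after Assumptions~\ref{assumere}) that drives \eqref{eq:uniqueness:k}--\eqref{eq:uniqueness:n}.
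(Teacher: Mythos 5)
Your proof is correct. The treatment of $\nu(\bbM_g)>1-c$ and of the one-arm bound \eqref{eq:1arm:coarse} coincides with the paper's (monotone convergence of $\bbM_g(\varepsilon)$ under \eqref{eq:supercritical}, and the a.s.\ existence of an infinite cluster meeting $\Lambda_{k}\times\bbM$ for $k$ large, whose escaping path must hit the width-one annulus $\partial_*\Lambda_N$ by the finite range $\ell_*=1$). For the local uniqueness estimates \eqref{eq:uniqueness:k}--\eqref{eq:uniqueness:n} both arguments are soft consequences of Assumption~\ref{assumere}\ref{ass:irreducible}, but the mechanics differ: the paper (following \cite{DembinNaN}) introduces the events $\cE_{r,s}$, uses their monotonicity in $s$ and a compactness/pigeonhole argument to show that $\bigcap_{s}\cE_{r,s}$ forces two disjoint infinite clusters, hence $\lim_r\lim_s\bbP(\cE_{r,s})=0$; you instead split $\cA(r,s)^c$ according to whether both offending components meet $\cC_\infty$ (excluded for $s$ beyond an a.s.\ finite connection radius $s_0$) or one of them lies in a finite cluster reaching $\partial_*\Lambda_s$ (killed by a first-moment/dominated-convergence bound). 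Both routes are valid and of comparable difficulty; yours isolates slightly more explicitly the two ways local uniqueness can fail, while the paper's double-limit formulation is marginally more compact. In either case the non-quantitative nature of the argument is harmless precisely because, as you note, $e^{-1/\varepsilon}$ is fixed before $k,K,n,N$ are chosen.
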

\begin{proof}
    By \eqref{eq:supercritical}, given $c\in(0,1)$, we can fix $\e>0$ small enough so that $\nu(\bbM_g)>1-c$.

By Assumption~\ref{assumere}\ref{ass:supercritical}, there exists $r_0$ depending on $\e$ such that $\bbP(\L_{r_0}\times\bbM \leftrightarrow \infty \text{ in } G(\bbX) )\geq 1-e^{-1/\e}$. Hence, \eqref{eq:1arm:coarse} is satisfied whenever $r_0\leq k < N-1$.

As in \cite{DembinToappear}*{Lemma~6.3}, for $r<s-1$ we define
$\cE_{r,s}$ as the event that the graph $G(\bbX)$ restricted to $\L_s\times\bbM$ has at least two connected components intersecting both $\L_r\times\bbM$ and $\partial_*\L_s\times\bbM$. Note that $\cA(r,s)\subset \cE_{r,s}^c$. Moreover, if the event  $\bigcap _{s=r+1}^\infty \cE_{r,s}$ occurs, then the finite family of the clusters of vertices in $\L_r\times\bbM$ must contain at least two disjoint infinite clusters. By Assumption~\ref{assumere}\ref{ass:irreducible} we then get that 
$\lim_{r\to+\infty} \lim_{s\to+\infty}\bbP( \cE_{r,s})=\bbP\left(\bigcup_{r=1}^\infty \big(\bigcap _{s=r+1}^\infty \cE_{r,s}\big)\right)=0$
and therefore $\lim_{r\to+\infty} \lim_{s\to+\infty}
\bbP(\cA(r,s))= 1$. This easily allows to take $r_0\leq k \ll K\ll n\ll N$ satisfying \eqref{eq:uniqueness:k}, \eqref{eq:uniqueness:n} and \eqref{eq:1arm:coarse}.
    \end{proof}

\begin{Definition}[Seed]
\label{def:seed}
A set $S\subset\bbR^{d}\times\bbM$ is a \emph{seed}, if the following conditions hold:
\begin{itemize}
    \item $S\subset \Lambda_{n-K}(z)\times \bbM_g$ for some $z\in\bbR^d$,
    \item $\sharp S=l:=\lceil 1/\varepsilon^2\rceil$,
    \item for all distinct $s=(x,m)\in S,s'=(x',m')\in S$, $\Lambda_K(x) \cap \Lambda_K(x')=\varnothing$.
\end{itemize}    
\end{Definition}
Let us remark that the notion of seed in Definition~\ref{def:seed} is not to be confused with the `seed' of \cite{Grimmett90}. Rather our seed should be thought of as analogous to the set $S$ in the renomalization scheme of
\cite{Duminil-Copin21}. Even though the latter is called `seedless' (because it does not use the seeds of \cite{Grimmett90}), the sets $S$ of \cite{Duminil-Copin21} de facto replace `seeds' in \cite{Grimmett90} in the sense that they are the starting point for further growth. In view of this, we still call them seeds, but in a broader sense.

The following lemma is an adaptation to our setting of \cite{Duminil-Copin21}*{Lemma~2}:
\begin{Lemma}[Seed to quarter-face]
\label{lemma_crescita} 
Let $S\subset\Lambda_{n-K}\times\bbM$ be a seed.  Let $\bbX$ be a  PPP   on $\bbR^d\times\bbM$ with intensity measure $\l \cL \otimes \nu$. Then, recalling \eqref{eq:def:F}, we have
\begin{equation}
    \label{eq:S:to:FN}\bbP  \Big(S\leftrightarrow  F_N\times\bbM \text{ in  }\Lambda_N\times \bbM 
    \text{ in } G( S\cup \bbX))\Big )\ge 1-e^{-c/\varepsilon}.
\end{equation}
\end{Lemma}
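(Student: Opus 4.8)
The plan is to follow the strategy of \cite{Duminil-Copin21}*{Lemma~2}, namely a union bound over the ``obstructing surface'' together with the local uniqueness estimate \eqref{eq:uniqueness:k} and the one-arm estimate \eqref{eq:1arm:coarse}, but with the extra twist that we must deal with bad marks via Lemma~\ref{lemma_udine}. The key observation is that each of the $l=\lceil 1/\varepsilon^2\rceil$ points $s=(x_s,m_s)\in S$ has $m_s\in\bbM_g$, so that by \eqref{set_good_marks} the point $s$ is connected to infinity within $G(\bbX\cup\{s\})$ with probability at least $\varepsilon$, hence, by \eqref{eq:1arm:coarse} applied in the translated box $\Lambda_N(x_s)$ together with the fact that $\Lambda_K(x_s)\subset\Lambda_n(x_s)\subset\Lambda_N(x_s)$ (we may intersect with the one-arm event for the box of radius $N$, up to updating constants), $s$ is connected to $\partial_*\Lambda_N(x_s)\times\bbM$ inside $\Lambda_N(x_s)\times\bbM$ with probability bounded below by $\varepsilon/2$, say. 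Call this event $\cR_s$. Since the boxes $\Lambda_K(x_s)$ are pairwise disjoint and $S\subset\Lambda_{n-K}\times\bbM$, one might hope the $\cR_s$ are ``almost independent'' — but they are not, because the arms from different $s$ may exit their $\Lambda_K$-boxes and interact. This is the point where local uniqueness enters.

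First I would set up the argument as follows. For each $s\in S$, consider the event $\cC_s$ that $s$ is connected inside $\Lambda_N(x_s)\times\bbM$ to $\partial_*\Lambda_K(x_s)\times\bbM$, via a path whose portion inside $\Lambda_K(x_s)\times\bbM$ lies in a cluster that also reaches $\partial_*\Lambda_N(x_s)$; combined with the uniqueness event $\cA(k,K,x_s)$ (translated), this forces \emph{all} long arms emanating near $x_s$ to merge into one cluster. The sprinkled PPP $G(S\cup\bbX)$ has the same law as $G(\bbX)$ with $S$ added as extra deterministic vertices; so I would use Lemma~\ref{lemma_udine} with $A=\{s\}$ (or $A=S$) and $B$ a suitable annular region to transfer the ``$s$ connects to infinity with probability $\ge\varepsilon$'' statement (which holds for $G(\bbX\cup\{s\})$ by \eqref{set_good_marks}) into a genuine connection event for $G(S\cup\bbX)$ at the cost of the sprinkling factor, which is harmless since we only need a probability bounded away from $0$. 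Then, conditionally on the FKG structure (Lemma~\ref{lemma_FKG}) and using the square-root trick would not be needed here; instead I would argue: if $S\not\leftrightarrow F_N\times\bbM$ inside $\Lambda_N\times\bbM$, then for \emph{each} $s\in S$ whose arm reaches $\partial_*\Lambda_N(x_s)$, that arm must turn and avoid the quarter-face $F_N$, which by the symmetry of the cube and a deterministic geometric/parity argument forces it to cross one of boundedly many ``slabs''; combined with uniqueness this would give an exponentially small (in the number of independent trials) probability.

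More concretely, I expect the cleanest route is: (i) by \eqref{set_good_marks} and Lemma~\ref{lemma_udine}, $\bbP(s\leftrightarrow \partial_*\Lambda_N(x_s)\times\bbM$ in $\Lambda_N(x_s)\times\bbM$ in $G(S\cup\bbX))\ge \varepsilon'$ for each $s$, with $\varepsilon'>0$ depending only on $\varepsilon$ and $\lambda,\lambda'$; (ii) on the uniqueness events $\cA(k,K,x_s)$ and $\cA(n,N)$ (whose complements have probability at most $e^{-1/\varepsilon}$ each, so by a union bound over the $l=O(1/\varepsilon^2)$ seed points plus a couple of global events, the total failure probability is at most $\mathrm{poly}(1/\varepsilon)e^{-1/\varepsilon}\le e^{-c/(2\varepsilon)}$), all these arms lie in a single cluster $\cC$ inside $\Lambda_N\times\bbM$ that reaches $\partial_*\Lambda_N$; (iii) by the coordinate-permutation and sign-flip symmetry (Assumption~\ref{assumere}\ref{ass:symmetry}, used as in \eqref{eq:sqrt:trick}-type arguments), the probability that $\cC$ reaches $\partial_*\Lambda_N$ but misses the particular quarter-face $F_N$ can be controlled: reaching $\partial_*\Lambda_N$ means reaching one of the $d2^d$ quarter-faces, and by symmetry each is equally likely, so the ``bad'' event where $\cC$ avoids $F_N$ decays once we have enough independent-ish seed points forcing $\cC$ to reach the boundary ``many times''. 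The quantitative bound $1-e^{-c/\varepsilon}$ then comes from: we have $l\asymp 1/\varepsilon^2$ seed points, each giving an independent (after conditioning on uniqueness, by the disjointness of the $\Lambda_K(x_s)$ and a careful exploration) Bernoulli($\ge\varepsilon'$) trial at ``reaching $F_N$''; if all $l$ trials fail the probability is at most $(1-\varepsilon')^{l}\le e^{-\varepsilon' l}\le e^{-\varepsilon'/\varepsilon^2}\le e^{-c/\varepsilon}$, absorbing the uniqueness error terms. I would make the ``independence after conditioning'' precise by exploring the clusters of the $\Lambda_K(x_s)$-boxes one at a time: each exploration, conditioned on the previous ones, still has conditional probability $\ge\varepsilon'$ of connecting its box to $F_N$ using fresh randomness in $\Lambda_N\setminus(\bigcup$ explored$)$, because uniqueness guarantees we may reroute through the already-built cluster.

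The main obstacle will be step (iii) combined with the independence bookkeeping in the continuum: making rigorous that, conditionally on the (exponentially likely) uniqueness events, the $l$ attempts to grow from the seed points to the quarter-face $F_N$ can be realized as $l$ successive trials each succeeding with probability bounded below by a constant, while only using disjoint regions of fresh PPP randomness. In the discrete lattice setting of \cite{Duminil-Copin21} this is handled by a careful sequential exploration; here the additional care is (a) the sprinkling of Lemma~\ref{lemma_udine} must be distributed among the $l$ attempts without exhausting the density budget $\lambda'-\lambda$ — but since $l$ is a constant (depending only on $\varepsilon$) this is fine, we just use $(\lambda'-\lambda)/l$ per attempt; and (b) the marks make the one-arm probability non-uniform, which is precisely why we restricted the seed to $\bbM_g$ and why \eqref{set_good_marks} is stated with the explicit lower bound $\varepsilon$. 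Modulo this bookkeeping, the rest is a union bound and the elementary inequality $(1-\varepsilon')^{l}\le e^{-c/\varepsilon}$ for $l=\lceil 1/\varepsilon^2\rceil$ and $\varepsilon$ small.
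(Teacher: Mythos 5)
Your overall blueprint (good marks give an $\varepsilon$ one-arm bound, disjoint $\Lambda_K$-boxes give independence, local uniqueness merges arms, symmetry handles the quarter-face) is the right one, but two steps of the assembly do not go through as described.

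First, the appeal to Lemma~\ref{lemma_udine} is both unnecessary and harmful. The present lemma concerns a PPP of intensity exactly $\l\cL\otimes\nu$ with no auxiliary independent PPP available, and it is later invoked (e.g.\ in \eqref{russell1}) at precisely that intensity while the $\l'-\l$ budget is spent elsewhere; consuming part of that budget inside this proof would change the statement being proved. No sprinkling transfer is needed anyway: by Definition~\ref{def:seed} the only point of $S$ inside $\Lambda_K(x_i)\times\bbM$ is $s_i$ itself, so the restriction of $G(\bbX\cup S)$ to $\Lambda_K(x_i)\times\bbM$ coincides with that of $G(\bbX\cup\{s_i\})$, and \eqref{set_good_marks} directly yields $\bbP\big(s_i\leftrightarrow\partial_*\Lambda_K(x_i)\times\bbM\big)\ge\varepsilon$.

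Second, and more seriously, your step (iii) requires $l$ ``independent Bernoulli($\ge\varepsilon'$) trials at reaching $F_N$''. These trials cannot be made independent: the arms from the different seed points towards $F_N$ all live in the shared region $\Lambda_N\setminus\bigcup_i\Lambda_K(x_i)$, and your sequential-exploration patch does not rescue this --- if the first explored cluster fails to reach $F_N$, ``rerouting through the already-built cluster'' cannot create the missing connection, and the conditional one-arm probability in the unexplored complement is not bounded below. The paper's decomposition avoids the issue entirely by splitting each attempt into (a) a purely local event $\cE_i$ confined to $\Lambda_K(x_i)\times\bbM$ (probability $\ge\varepsilon/2$, genuinely independent across $i$ by disjointness of the boxes) and (b) a global crossing event $\cD_i$ from $\Lambda_k(x_i)\times\bbM$ to a suitable translated quarter-face, which by the square-root trick \eqref{eq:sqrt:trick} applied to \eqref{eq:1arm:coarse} already holds with probability $\ge 1-e^{-1/(d2^d\varepsilon)}$, so that a plain union bound over $i$ suffices and no independence of the global arms is required; the local uniqueness events $\cA(k,K,x_i)$ then glue (a) and (b), and $\cA(n,N)$ together with one more square-root-trick estimate upgrades $\partial_*\Lambda_N$ to $F_N$ at the end. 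You should restructure your argument along these lines; note in particular that, contrary to your initial remark, the square-root trick is indispensable both for reaching the boundary from an off-centre box and for landing on the specific quarter-face $F_N$.
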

\begin{proof}
Let $S=\{s_i:i\in\{1,\dots,l\}\}$ where $s_i=(x_i,m_i)$.
We realize the random graph $G(\bbX\cup S)$ so that $G(\bbX)$ is the restriction of $G(\bbX\cup S)$ to the vertex set $\bbX$. For $i\in\{1,\dots,l\}$, we set $Q_i:=\Lambda_K(x_i)$ and $Q_i':=\L_k(x_i)$ and introduce the events
\begin{align*}
\cE_i&{}:= \left\{ (x_i,m_i) \leftrightarrow   \partial_*Q_i\times \bbM \text{ in  } G(\bbX\cup S)  \right\}\cap \cA(k,K,x_i)\,,\\
\cB_i&{}:=\left\{ Q_i'  \times \bbM  \not \leftrightarrow  \partial_*\L_N\times\bbM \text{ in  } G(\bbX)  \right\}\,,
\end{align*}
where the event $\cA(k,K,x_i)$ refers to the graph $G(\bbX)$.

By \eqref{eq:uniqueness:k} and since $m_i\in \bbM_g$ by Definition~\ref{def:seed}, we have $\bbP(\cE_i)\ge \varepsilon-e^{-1/\varepsilon}\ge \varepsilon/2$ for $\e$ small enough and any $i\in\{1,\dots,l\}$. Since the boxes $Q_i$ are disjoint and  each event $\cE_i$ depends only on vertices and edges in $Q_i\times\bbM$, the events $\cE_i$ are independent. Hence we can bound
\begin{equation}
\label{eq:cup:Ei}
\bbP\left(\bigcup_{i=1}^l \cE_i\right)\geq 1-\left(1-\frac{\e}{2}\right)^l \geq 1-  e^{-1/(2\e)}\ge 1-e^{-2c/\varepsilon},
\end{equation}
since $0\leq 1-x\leq e^{-x}$ for $0\leq x\leq 1$ and choosing $c\le 1/4$.

To estimate $\bbP(\cB_i)$ one cannot apply \eqref{eq:1arm:coarse} directly, because $Q_i'$ is not necessarily centered at $0$. As in \cite{Duminil-Copin21}, we fix $j(i)\in\{1,\dots,d\}$ and $\sigma(i)\in\{\pm1\}^d$, so that $F^i=x_i+F^{j(i),\sigma(i)}_N$ (recall \eqref{eq:def:F}) does not intersect the interior of $\L_{N-1}$. 
Note that the event $\cD_i:=\{Q_i' \times\bbM \leftrightarrow F^i\times\bbM \text{ in } \L_N(x_i)\times\bbM \text{ in }G(\bbX)\}$ is disjoint from $\cB_i$. Hence,
\begin{align}
\nonumber \bbP (\cB_i)&{}\leq \bbP\left( \cD_i^c\right)=\bbP\left(\Lambda_k\times\bbM\not\leftrightarrow F^{j(i),\sigma(i)}_N\text{ in }\Lambda_N\times \bbM\text{ in }G(\bbX)\right)\\
\label{eq:sqrt:trick}&{}=\left(\prod_{j\in\{1,\dots,d\}}\prod_{\sigma\in\{\pm1\}^d} \bbP\left(\Lambda_k\times\bbM \not\leftrightarrow F^{j,\sigma}_N\times\bbM \text{ in }\L_N\times\bbM\text{ in } G(\bbX)\right)\right)^{1/(d2^d)}\\
\nonumber&{}\le \bbP(\Lambda_k\times\bbM \not\leftrightarrow \partial_*\Lambda_N\times\bbM\text{ in } G(\bbX))^{1/(d2^d)}\leq e^{-1/(d 2^d \e)},
\end{align}
using Assumption~\ref{assumere}\ref{ass:stationary} for the first equality, Assumption~\ref{assumere}\ref{ass:symmetry} for the second one, the FKG inequality \eqref{eq:FKG} (indeed the involved events can be restated in the space $\cN(\bbW)$) for the second inequality and \eqref{eq:1arm:coarse} for the last one.

By a union bound \eqref{eq:sqrt:trick} gives
\begin{equation}
\label{eq:cup:Bi}\bbP \left(\bigcup_{i=1}^l \cB_i\right)\leq l e^{-1/(d 2^d \e)}\leq e^{-2c/\e},\end{equation}
since $l \leq 1/\e^2+1$ and $\varepsilon$ and $c$ are small enough.

Note that $\cE_i\setminus\cB_i\subset \{(x_i,m_i)\leftrightarrow\partial_*\Lambda_N\times\bbM\text{ in }G(\bbX\cup S)\}$ for all $i\in\{1,\dots,l\}$ (here we use the event $\cA(k,K,x_i)$). Thus, \eqref{eq:cup:Ei} and \eqref{eq:cup:Bi} give
\[\bbP\left(S\leftrightarrow \partial_*\Lambda_N\times\bbM\text{ in } G(\bbX\cup S)\right)\ge \bbP\left(\bigcup_{i=1}^l(\cE_i\setminus\cB_i)\right)\geq \bbP\left(\bigcup_{i=1}^l\cE_i\right)-\bbP\left(\bigcup_{i=1}^l \cB_i\right)\geq 1- 2e^{-2c/\varepsilon}.\]
Note that, since ${\ell}_*=1$, in the event in the l.h.s.\ we can add that the path connecting $S$ with $\partial_*\Lambda_N\times\bbM$ lies inside $\L_N\times\bbM$. Hence, to conclude the proof, it remains to replace $\partial_*\L_N$ with $F_N$ above.
To this end, notice that the event $\{S\leftrightarrow F_N\times\bbM\text{ in }\L_N\times\bbM\text{ for }G( \bbX\cup S)\}$ is implied by   the intersection of the following three events:
 \[\{S\leftrightarrow \partial_*\Lambda_N\times \bbM\text{ in } G(\bbX\cup S)\}\,,\qquad  \{\Lambda_k\times\bbM \leftrightarrow F_N\times\bbM\text{ in }\L_N\times\bbM \text{ in } G( \bbX) \}\,,\qquad \cA(n,N),\]
where the event $\cA(n,N)$ refers  to $G(\bbX)$. Consider \eqref{eq:sqrt:trick}, but only starting from the third probability there, and observe that it remains true also when replacing  $F_N^{j(i), \s(i)}$ by $F_N$. Then, recalling also
 \eqref{eq:uniqueness:n}, we obtain the desired bound
\[\bbP(S\leftrightarrow F_N\times\bbM\text{ in }\L_N\times\bbM \text{ in }G(\bbX\cup S))\ge 1-2e^{-2c/\varepsilon}-e^{-1/(d2^d\varepsilon)}-e^{-1/\varepsilon}\ge 1-e^{-c/\varepsilon}.\qedhere\]
\end{proof}

For the next result recall that $\l'$ is a fixed density with $\l'>\l$ (see Section~\ref{subsec:notation}). This lemma is our starting point for dealing with the presence of bad marks in the model.
\begin{Lemma}[Seed to seed]
\label{lem:seed:to:seed} There exists an event 
$\cH\subset \O\times[0,1]^J$ 
such that 
\begin{enumerate}
\item \label{cond:proba}$P_{\lambda',\nu}\otimes Q(\cH)>1-\varepsilon$;
\item \label{cond:deterministic}if $(\o,\underline u )\in\cH$, then any path $(x_i,m_i)_{i=0}^r$ in $\Lambda_{6N}\times\bbM$ for $G(\o,\underline{u})$ such that $|x_r-x_0|_\infty\ge  \sqrt{N} -1$ contains a seed;
\item \label{cond:decreasing} the event $\cH$ is of the form $\cH:=\{(\o,\underline u)\in \O
\times [0,1]^J: G(\o,\underline u)\in \cH_*\}$ for a suitable set  $\cH_*$ of graphs with vertex set in $\bbR^d
\times \bbM$.  $\cH_*$ is  decreasing w.r.t.\ inclusion (i.e.\ if $\cG\in\cH_*$ and  $\cG\supset\cG'$, then $\cG'\in\cH_*$) and a graph on $\bbR^d\times\bbM$ belongs to $\cH_*$ if and only if its restriction  to $\L_{6N}\times \bbM$ belongs to $\cH_*$.
\end{enumerate}
\end{Lemma}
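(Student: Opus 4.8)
The lemma asks for an event $\cH$ that is (1) likely under the sprinkled intensity $\l'$, (2) deterministically forces every sufficiently long path inside $\L_{6N}\times\bbM$ to contain a seed, and (3) is a decreasing, locally determined event of the graph. The key tension is between wanting $\cH$ \emph{decreasing} (to later apply FKG) while also wanting a \emph{positive} statement (``contains a seed''). The resolution, as is standard for such statements, is to write the seed-finding property as the \emph{absence of a bad configuration}: a path fails to contain a seed only if it is somehow ``thin'' — either it does not accumulate enough points with marks in $\bbM_g$ that are pairwise $\L_K$-separated along its length, and this deficiency should itself be an increasing phenomenon (more edges, more vertices make it \emph{easier} to find a seed). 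So I would define $\cH_*$ to be the set of graphs $\cG$ such that the restriction $\cG|_{\L_{6N}\times\bbM}$ has the property that \emph{every} path of $\ell_\infty$-displacement at least $\sqrt N-1$ contains a seed, and then argue $\cH_*$ is decreasing because removing edges or vertices only removes paths — it cannot create a path lacking a seed out of one that had one. Condition (iii) is then essentially built in, except for one subtlety: the locality claim (``$\cG\in\cH_*$ iff its restriction to $\L_{6N}\times\bbM$ is'') forces me to phrase everything about paths \emph{inside} $\L_{6N}\times\bbM$ only, which matches condition (ii) anyway. Monotonicity under vertex/edge removal then needs to be checked at the level of this restricted graph, which is immediate since a path in a subgraph is a path in the graph.

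**Why a long path must contain a seed — the probabilistic core.** The real content is condition (i): under $P_{\l',\nu}\otimes Q$, with $\l'>\l$, the probability that \emph{some} path in $\L_{6N}\times\bbM$ with displacement $\ge\sqrt N-1$ fails to contain a seed is at most $\varepsilon$. Here is where I would exploit the sprinkling gap $\l'-\l>0$ together with the good-mark bound $\nu(\bbM_g)>1-c$ from Lemma~\ref{lemma_scale}. A path $(x_i,m_i)_{i=0}^r$ with $|x_r-x_0|_\infty\ge\sqrt N-1$ has, by the edge length bound (Assumption~\ref{assumere}\ref{ass:finite:support} with $\ell_*=1$), length $r\ge\sqrt N-1$, hence visits at least of order $\sqrt N / K$ disjoint $\L_K$-boxes as it progresses (more precisely, greedily pick vertices along the path whose $\L_K$-neighbourhoods are pairwise disjoint; the displacement forces at least $\sim\sqrt N/(2K)$ of them). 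For such a path to fail to contain a seed, at least $\sim\sqrt N/(2K) - l$ of these chosen vertices — with $l=\lceil1/\varepsilon^2\rceil$ fixed — must carry a mark \emph{outside} $\bbM_g$. I would make this precise by the following dichotomy: either the path already has $l$ vertices with $\L_K$-separated neighbourhoods and marks in $\bbM_g$ (in which case it contains a seed, since those lie in $\L_{6N}\times\bbM_g$, and the seed can be taken inside some $\L_{n-K}(z)\times\bbM_g$ because $6N+\text{const}$ is still within a box of radius $\ll$ whatever we need — wait, here one must be careful: a seed needs to sit inside $\L_{n-K}(z)$ for \emph{some} $z$, which is automatic since any finite point set lies in some box), or the path is forced to route through many bad marks.

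**Counting bad-mark configurations.** To bound the probability of the bad event I would do a union-type / moment estimate over ``skeletons'': a bad path gives rise to a sequence of $\ge m_0:=\lceil\sqrt N/(2K)\rceil - l$ points of the PPP, inside $\L_{6N}$, with pairwise disjoint $\L_K$-neighbourhoods and all marks in $\bbM_g^c$. By the multivariate Mecke formula, the expected number of such $m_0$-tuples is at most
\[
\frac{(\l' \cL(\L_{6N}))^{m_0}}{m_0!}\,\big(\nu(\bbM_g^c)\big)^{m_0}\le \frac{(\l' (12N)^d)^{m_0}}{m_0!}\, c^{m_0},
\]
and since $c$ is tiny while $m_0\to\infty$ with $N$, Stirling makes this go to $0$; in fact choosing the scales as in \eqref{eq:scales} ($N\gg n\gg K\gg k\gg1/\varepsilon\gg1/c$) with $c$ small enough and then $N$ large enough, this expectation — and hence by Markov the probability of the bad event — is below $\varepsilon$. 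One technical point to handle carefully: a path is not just a tuple of points but also uses edges, and consecutive vertices are within $\ell_\infty$-distance $1$; but since we are \emph{upper} bounding the bad event by forgetting the edge constraints and just demanding the existence of the bad $m_0$-tuple of vertices (which is implied), this only helps. **The main obstacle** I anticipate is not the counting but the bookkeeping in condition (iii): making sure the event is genuinely decreasing and genuinely local to $\L_{6N}\times\bbM$ while the ``contains a seed'' phrasing is positive — this requires stating $\cH_*$ as ``every path \dots\ contains a seed'' and verifying that deleting vertices/edges from a graph whose restriction lies in $\cH_*$ keeps the restriction in $\cH_*$, which works precisely because deletion only destroys paths, never creates a seedless long one; and checking that a path fully inside $\L_{6N}\times\bbM$ depends only on the restriction to that box. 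The second mild obstacle is confirming that the $l$ separated good-mark vertices we extract actually satisfy \emph{all} three bullets of Definition~\ref{def:seed} simultaneously — in particular that they fit inside a single $\L_{n-K}(z)\times\bbM_g$; this is where having $6N$ as the confining box (rather than something smaller) and $n-K$ comfortably large via \eqref{eq:scales} is used, but since the whole path lies in $\L_{6N}$, any $l$ of its vertices lie in $\L_{6N}\subset\L_{6N}(0)$, and one takes $z=0$ with $n-K\ge 6N$ — no, that contradicts $N\gg n$. So in fact the seed must be extracted from a \emph{sub-path of bounded extent}: among the $\sim\sqrt N/(2K)$ separated candidates, consecutive blocks of $l$ of them lie within $\ell_\infty$-distance $\sim lK\sqrt N / \sqrt N \ll n-K$ along the path — more carefully, $l$ consecutive separated vertices along a path span displacement at most $l\cdot(2K+1)\le n-K$ by the scale ordering, so they do fit inside some $\L_{n-K}(z)$. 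This localisation is the subtle step and must be done before the counting, so that ``contains a seed'' really follows from ``contains $l$ consecutive $\L_K$-separated good-mark vertices.''
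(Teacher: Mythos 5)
Your choice of $\cH_*$ (``every path in $\L_{6N}\times\bbM$ of displacement $\ge\sqrt N-1$ contains a seed'') is legitimate: it contains the event the paper actually constructs, and your verification of conditions (ii) and (iii) for it is fine, since a seed is a vertex-set property and deleting vertices or edges only destroys paths. The fatal problem is your proof of condition (i). Your deterministic reduction replaces ``the path contains no seed'' by ``there exist $m_0\sim\sqrt N/(2K)$ pairwise $\L_K$-separated points of the PPP in $\L_{6N}$ with marks in $\bbM_g^c$,'' and you then bound the first moment of the number of such $m_0$-tuples by $(\l'(12N)^d c)^{m_0}/m_0!$. This quantity does not go to $0$: by Stirling it is of order $\bigl(e\l' c(12N)^d/m_0\bigr)^{m_0}$, and since $m_0\sim\sqrt N/(2K)$ while the volume is $(12N)^d$, the base is of order $N^{d-1/2}\to\infty$. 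The event you are union-bounding over in fact has probability tending to $1$: whenever $\nu(\bbM_g^c)>0$, the bad-mark points form a homogeneous PPP of positive intensity and $\L_{6N}$ contains enormously many separated such points. Forgetting the edge constraints does not ``only help''--- the connectivity of the path (consecutive vertices at $\ell^\infty$-distance $\le 1$, by $\ell_*=1$) is precisely the information that makes the bad event rare, and your reduction discards it.

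The paper's argument keeps this information in two ways you are missing. First, the correct deterministic consequence of seedlessness is not ``many isolated bad vertices'' but ``a long \emph{consecutive} bad-mark sub-path'': restricting to a box $\L_n(y_j)$ crossed by the path, if the good-mark vertices of the sub-path $\g_j$ are covered by at most $1/\e^2$ balls of radius $2K$ while $\mathrm{diam}(\g_j)\ge n/2-1$, then some sub-path entirely in $\bbM_g^c$ has diameter $D>\sqrt n$; such a run is exponentially unlikely in $\sqrt n$ by comparison with a subcritical Bienaym\'e--Galton--Watson process (here connectivity enters, via $2^d\l'\nu(\bbM_g^c)<1$). Second, since a single long path meets many $n$-boxes and the box events are only $1$-dependent, one cannot conclude by a naive union bound either; the paper passes through Liggett--Schonmann--Stacey domination and defines $\cH^c$ as the existence of a chain of $\sqrt[3]N$ bad boxes. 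Your localisation worry at the end (fitting $l$ good separated vertices into a single $\L_{n-K}(z)$) is real but secondary, and your proposed fix via ``consecutive blocks'' of greedily selected candidates is itself shaky, since the greedy selection order need not track spatial position along a path that backtracks; the paper sidesteps this by working inside fixed boxes of side $2n$ from the start. As written, condition (i) is not established and the counting step cannot be repaired without reintroducing the path connectivity.
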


\begin{Remark}\label{rem:seed:to:seed}
Thanks to Item~\ref{cond:decreasing}, Lemma~\ref{lem:seed:to:seed} remains valid if, in Item \ref{cond:proba}, we replace $P_{\l',\nu}$ by $P_{\mathfrak{m},\nu}$,  $\mathfrak{m}$ being any measure on $\bbR^d$   dominated by $\l' {\cL} $.
\end{Remark}

\begin{proof}[Proof of Lemma~\ref{lem:seed:to:seed}] We set $P:=P_{\l',\nu}\otimes Q$.
Recall the definition \eqref{set_good_marks} of good marks.  
 Below, just to simplify the notation but w.l.o.g., we assume that $\sqrt{n}$ is integer. 
 
For $z\in n\bbZ^d$, we say that the box $\Lambda_n(z)$ is \emph{bad}, if there is a  path in $G(\o,\underline{u})$ with vertices  in  $(\Lambda_n(z)\cap\Lambda_{6N})\times \bbM_g^c$ of  length larger than $\sqrt n$; and \emph{good} otherwise. Note that boxes $\Lambda_n(z)$ which do not intersect $\Lambda_{6N}$ are automatically good.

Let us  bound from above  the probability that $\L_n(z)$ is bad by a known
  branching process comparison.
When sampling $\o$ with law $P_{\l' ,\nu}$,  the set $\{x \in \bbR^d\,:\, (x,m_x)\in \o\,,\;  m_x \in \bbM_g^c\}$ is distributed as a  homogeneous PPP $\xi$ on $\bbR^d$ with intensity $\rho:=\l' \nu(\bbM_g^c)$.
Consider now the RCM built on  $\xi$  with connection function $\varphi'(x,y):= \1( |x-y|_\infty\leq 1
)$ (i.e.~the  Poisson--Boolean model with radius $1/2$). Then,  considering $\xi$ conditioned to contain the origin (equivalently, taking the RCM on $\xi\cup\{0\}$),   the number $N_k$ of points with graph distance $k$ from the the origin is stochastically dominated by the number $\tilde N_k$ of points  in the $k$-th generation of a Bienaym\'e--Galton--Watson branching process with expected offspring equal to $2^d \rho$  (see e.g.\ the proof of  \cite{Meester96}*{Theorem~6.1}).  
Hence, by the Markov inequality,
\[\bbP(N_k\ge 1)\le \bbE[\tilde N_k]=(2^d\rho)^k.\]
Considering the RCM with connection function $\varphi'$ on  the  homogeneous PPP $\xi$
with intensity $\rho$,
we define $A_x$ as the event that this RCM  has at least one point at graph distance $\sqrt{n}$ from $x$ when $x$ is a vertex.
Using  that ${\ell}_*=1$ in Assumption~\ref{assumere}\ref{ass:finite:support} we can therefore bound\footnote{We take $\bbP$  defined on the space  $\cN(\bbR^d)$ of locally finite subsets of $\bbR^d$  and $\xi$ the identity map.}
\be\label{branching}
\begin{split}
P (\L_n(z)\text{ is bad})&\leq P(\L_n\text{ is bad})\leq \bbP \Big(
\bigcup_{x\in \xi\cap \L_n}  A_x \Big)\\
& \leq \bbE\Big[ \sum_{x\in \xi\cap \L_n}  \1_{A_x}\Big]  = \rho \int_{\L_n} \bbP(\xi\cup\{x\}\in A_x)\,\md x  \leq  (2n)^d 
(2^d \rho)^{\sqrt{n}}\rho
\end{split}
\en
(the above identity follows from Mecke equation, see \cite{Last18}*{Theorem~4.1}). By Lemma~\ref{lemma_scale}, we have $2^d\rho\le 2^d\lambda' c<1$ (by choosing $c$ small), so \eqref{branching} can be made arbitrarily small by taking $n$ large.

Let $\cH$ be the event such that its complement $\cH^c$ is the following: there are at least  
$\sqrt[3] N$ distinct  bad boxes $\L_n(y_1),\L_n(y_2),\dots, \L_n(y_{\sqrt[3]N})$ with successive centers in $n\bbZ^d$  at $\ell^\infty$-distance at most $4n$ (to simplify the notation we assume $\sqrt[3]N$ to be integer). 
The event $\cH$ clearly satisfies \ref{cond:decreasing} as $\cH^c$ is described in terms of $G(\o,\underline u)$ and is increasing w.r.t.\ graph inclusion.
 
Let us now prove \ref{cond:proba}. Since good boxes form a 1-dependent process, the Liggett--Schonmann--Stacey theorem \cite{Liggett97}  (also see \cite{Grimmett99}*{Theorem~(7.65)}) implies the following. For $p\in (0,1)$, by taking the rightmost term in \eqref{branching} small enough (that is, $n$ large enough), the random field $(T_y)_{y\in\bbZ^d}$ with   $T_y:=\1(\L_n(ny) \text{ is good})$ stochastically dominates a Bernoulli site percolation with parameter $p$. 
Due to the this stochastic domination  and since bad boxes have to intersect $\L_{6N}$, we have  $P(\cH)\ge 1-\varepsilon$, taking $p$ close to 1 (and therefore $n$ large) and afterwards $N$ large enough.  This follows easily since for the above Bernoulli site percolation  the probability to have a 
path of length $\sqrt[3] N$ of closed points in $\L_{\lceil 6N/n\rceil +1}$ such that consecutive points have $\ell^\infty$--distance at most $4$  is upper bounded by  $(12N+5)^d\left(9^d 
(1-p)\right)^{\sqrt[3] N}$.

We next prove \ref{cond:deterministic}.
We first observe that the Voronoi tessellation associated to $n \bbZ^d$ is given by the boxes $\L_{n/2}(z)$ with $z\in n\bbZ^d$. In particular, for each $x\in \bbR^d$ we can find $z\in n \bbZ^d$ such that $x \in \L_{n/2}(z)$. Fix $(\omega,\underline{u}) \in\cH$ such that there exists a path $\gamma=(x_i,m_i)_{i=0}^r$ as in the statement of \ref{cond:deterministic}.  Let $z_i\in n \bbZ^d$ be such that $x_i\in \L_{n/2} (z_i)$. Recall that $|x_{i+1}-x_i|_\infty \leq 1$ for any consecutive points in $\g$ (as ${\ell}_*=1$) and $|x_r-x_0|_\infty\geq \sqrt{N}-1$. Then we  prune the finite sequence $\L_n (z_1),
\L_n (z_2),\dots, \L_n(z_r)$, by loop-erasing and by removing intersecting boxes,  and we  extract distinct points $y_0,y_1,\dots,y_s$ among $z_0,z_1,\dots,z_r$ such that $|y_{j+1}-y_j|_\infty =2n$ for each $j=0,1,\dots, s-1$, where $y_0:=z_0$, and $|y_{s}- z_r|_{\infty}\leq 2n$. Note that necessarily $s\geq (\sqrt{N}-1-2n) /(2n)$. For each $j=1,2,\dots, s$ we note that   the path $\gamma$ goes  from $(x_0,m_0)$ into  $ \Lambda_{n/2}(y_j)\times\bbM$, hence  $\gamma$ needs to give rise to a path $\gamma_j$  in $ \Lambda_n(y_j)\times\bbM$ of $\ell^\infty$-diameter at least $n/2-1$.  We claim that some  $\g_j$ contains a seed.

Assume by contrapositive  that, for all $j=1,2,\dots,s $, $\gamma_j$ does not contain a seed. Fix $j$. We aim to prove  that $\L_n(y_j)$ is bad.  Let us call $Y_j$ a maximal subset of vertices of  $\g_j$ in $\Lambda_n(y_j)\times\bbM_g$ with mutual $\ell^\infty$-distance at least $2K$.  Since there is no seed, $\sharp Y_j\leq 1/\e^{2}$. By maximality of $Y_j$, any other vertex of $\g_j$ in $\Lambda_n(y_j)\times\bbM_g$ has $\ell^\infty$-distance from $Y_j$ at most $2K$. Hence the set $\tilde Y_j$ of vertices of $\g_j $ with marks in $\bbM_g$ is contained in the union of $1/\varepsilon^2$ $\ell^\infty$-balls of radius $2K$. Call $D$ the maximal diameter of a sub-path of $\gamma_j$ in $\Lambda_{n}(y_j)\times\bbM_g^c$. Since the diameter of $\gamma_j$ is at most the diameters of the above balls plus the lengths of paths between them in some linear order, we get
\[n/2-1\le \mathrm{diam}(\gamma_j)\le (4K+D+1)(1/\varepsilon^2+1)
\,.\]
In view of \eqref{eq:scales}, we get $D>\sqrt n$.

Consequently, the box $\Lambda_n(y_j)$ is bad, and this holds for any $j$.  Since we can fix the scales so that $(\sqrt{N}-1-2n)/(2n)>\sqrt[3] N$, the boxes $\L_n(y_i)$  contradict the occurrence of the event $\cH$, thus concluding the proof. 
\end{proof}

\section{Exploration process to find LR crossings}\label{sec_tanemura}
Just in this section,   given    $A\subset \bbZ^2$, we set  $\D A:= \{ y\in \bbZ^2 \setminus A\,:\, |x-y|=1\text{ for some } x\in A\}$.

In this section we review  and extend the exploration process introduced in \cite{Tanemura93}*{Section~4} to verify that  a random field on $\bbZ^2$ has enough LR crossings of a given box. The extension is due to the need in our applications that these LR crossings also lie inside the infinite cluster. In our extension  we have kept as much as possible the same notation as \cite{Tanemura93}*{Section~4} (the first part is also close  to  \cite{Faggionato21}*{Section~4}).

 Before giving the formal details, let us describe the argument informally in a simpler geometric context. Suppose that we seek to find the maximal number of vertex-disjoint LR paths crossing a box in a Bernoulli site percolation configuration. To do so, we explore the configuration, by finding the lowest crossing from the lowest point on the left boundary of the box, if it exists. Then, we explore the part of the box above the already explored path similarly starting from the second lowest point and so on. Lemma~\ref{lemma_pistacchio} below states that, if the success probability of each site being open is large enough, then this procedure yields many LR crossings with high probability. 
 A notable complication in our setting is that we need to ensure that the disjoint crossings belong to the infinite cluster, which makes exploration arguments a bit more delicate and which requires new specific estimates (cf.~Section~\ref{sec_prob_bounds}).

   \subsection{Order \texorpdfstring{$\prec$}{<} on \texorpdfstring{$\D \{x_1, \dots, x_n\}$}{Delta\{x1,...,xn\}}}
   \label{total_order}
Let $(x_1, x_2, \dots, x_n)$  be a string of points in $\bbZ^2$, such that, for any integer $k$ with  $2\leq k \leq n$, $x_k\in\Delta\{x_1,\dots,x_{k-1}\}$.  
  We first introduce a total order $\prec_k$ on the points neighbouring $x_k$ for any $k=1,2,\dots, n$. To this aim we 
let    $a(k):= \max \{j: 0\leq j< k \text{ and } |x_k-x_j|=1\} $, where $x_0=x_1-e_1$.  Then we $\prec_k$-order the points $y\in\bbZ^2$  adjacent to $x_k$ starting from $x_{a(k)}$ (minimal point) and moving clockwise (to  have increasing points). 

   The order $\prec$ on $\D  \{x_1, \dots, x_n\}$  is obtained as follows (we go from the largest  element to the first one). The largest elements are the points in $\D  \{x_1, \dots, x_n\}$   neighboring $x_n$ (if any), ordered according to $\succ_n$.
The next elements, in decreasing order, are the points in  $\D  \{x_1, \dots, x_n\}$  neighboring $x_{n-1}$ but not $x_n$ (if any), ordered according to $\succ_{n-1}$. As so on, in the sense that in the generic step one has to consider the points in   $\D  \{x_1, \dots, x_n\}$  neighboring $x_k$ but not $x_{k+1}, \dots, x_n$ (if any), ordered according to $\succ_k$.
   
   The above order is the one introduced in \cite{Tanemura93}*{Section~4} with a relevant exception. Indeed in \cite{Tanemura93} the author supposes that $(x_1,x_2,\dots,x_n)$ is a path  in $\bbZ^2$ but then refers to the ordering  also when dealing with more general strings, as the above ones. If $(x_1,x_2,\dots,x_n)$ is a path,  then $a(k)=k-1$ for all $k=1,2,\dots, n$  and the above definition  of $\prec$ coincides 
 with the order on $\D\{x_1,x_2,\dots,x_n\}$ introduced in \cite{Tanemura93}*{Section~4}.

   \subsection{Exploration process}
   \label{subsec:exploration}
   Given positive integers $L,M$, we consider the domain 
   \be\label{scooby}
   \Lambda_L'=\Big([0,M-1]^2\cup([M,M+L]\times [-L,M+L])\Big)\cap \bbZ^2\,,
   \en
with  \emph{left boundary} $\partial_l\Lambda_L'=\{0\}\times\{0,\dots,M-1\}$ and \emph{right boundary} 
\[\partial_r\Lambda_L'=\Big(\{M+L\}\times\{-L,\dots,M+L\}\Big) \cup\Big(\{M,\dots,M+L\}\times\{-L,M+L\} \Big).\] We denote $x_1^s=(0,s)$ for $s\in\{0,\dots,M-1\}$. Note that in the above notation $\Lambda_L'$, $\partial_l\Lambda_L'$ and  
$\partial_r\Lambda_L'$  the integer $M$ is kept implicit.
Suppose that on some probability space, with probability measure $\bbQ_{L}$ the following events are defined: $\{x_1^s:=(0,s)\text{ is occupied}\}$  for $s=0,1,\dots, M-1$, $\{y\text{ is linked to } x\}$ for $|x-y|=1$ and $x,y\in\Lambda'_L$.   
 
Having these events, we  can construct the random sets $C^s_j= ( E^s_j, F^s_j)$ of sites explored from the $j$-th starting point, which are reached or not respectively, with $s\in\{0,1,\dots,M-1\}$ and $j\in\{1,2,\dots, \sharp\Lambda_L'
\}$,  in the following order. We first define $C^0_1$, $C^1_1$, $\dots$, $C^{M-1}_1$, then iteratively
\be\label{cannolo42}
C^0_2, \;C^0_3, \dots,\; C^0_{\sharp\Lambda_L'},\; C^1_2,\; C^1_3,\dots,\; C^1_{\sharp\Lambda_L'},\dots,
\; C^{M-1}_2,\; C^{M -1}_3, \dots,\; C^{M-1}_{\sharp\Lambda_L'}\,.
\en

 The construction will ensure the following properties:
 \begin{itemize}
  \item $E^s_j\subset\L_L'$ and $F^s_j\subset\L_L'$ are disjoint sets;
   \item  there exists $J^s \in \{1,2\dots,\sharp\Lambda_L'\}$ such that, for $j< J^s$, 
 $C^s_{j+1}=( E^s_{j+1}, F^s_{j+1})$ is obtained from $C^s_j=( E^s_j, F^s_j)$   by adding exactly one point (called $x^s_{j+1}$) either to $E^s_j$ or to $F^s_j$ and $x^s_{j+1}\in \D E^s_j$ if added to $E^s_j$; while, for $j \geq J^s$, $C^s_{j+1}=( E^s_{j+1}, F^s_{j+1})$ equals $C_j^s=( E^s_j, F^s_j)$.
 \end{itemize}
 
 We start with the first block $C^0_1$, $C^1_1$, $\dots$, $C^{M-1}_1$ with $j=1$. Recall that $x^s_1:=(0,s)$. For all  $s=0,1,\dots, M-1$ we  build $C^s_1$ as follows:
 \begin{equation}\label{eq:pasqua} 
\begin{cases}  \text{if  $x^s_1$ is occupied, then 
$C^s_1 =  \left( E^s_1, F^s_1 \right):= (\{x^s_1\},\emptyset)$}\,,\\
\text{if   $x^s_1$ is not occupied, then 
$C^s_1 =  \left( E^s_1, F^s_1 \right):= (\emptyset,\{x^s_1\})$}\,.
\end{cases}
\end{equation}

We now define iteratively   the sets in \eqref{cannolo42}   as follows.

If $x_1^s$ is not occupied (i.e.~$E^s_1=\emptyset$), then  we set   $J^s:=1$, thus implying that $C_1^s=C_{2}^s=\cdots= C_{\sharp\Lambda_L'}^s$. 

If  $x_1^s$ is occupied (i.e.~$E^s_1\not =\emptyset$), then we proceed as follows. Suppose that we have defined all the sets  preceding $C^s_{j+1}$  in the above string \eqref{cannolo42} {(i.e.\ up to $C^s_j$), that we have not assigned to  $J^s$ any value yet  and that we want to define $C^s_{j+1}$.  
  We call $W^s_j$ the points of $\L_L'$ involved in the construction up to this moment, i.e. \[
W^s_j = \{x_1^k:0\leq k \leq M-1\} \cup \{ x^{s'}_r :  0 \leq s'< s, \, 1< r \leq \sharp\Lambda_L'\}
\cup \{ x^{s}_r : 1< r \leq j\}
\,.
\]
We recall that it must be $E^s_0 \subset E^s_1\subset \cdots \subset E^s_j$ and at each inclusion either the two sets are equal or the second one is obtained from the first one by adding exactly one point adjacent to the previous one.
Hence, we can order the elements of $E^s_j$ according to the order with which the points have been added. This order on $E^s_j$ induces an order $\prec $ on the boundary $\D E^s_j$ as described in Section~\ref{total_order}. We call  $\cP^s_j$ the following property:  $E^s_j$ is disjoint from  the right boundary $\partial_r\Lambda_L'$ and   $(\L_L'\cap  \D{E}^s_j)  \setminus W^s_j \not = \emptyset$.   If property $\cP^s_j$ is satisfied, then  we define $x^s_{j+1} $  as the largest element of $(\L_L' \cap \D{E}^s_j)  \setminus W^s_j $  w.r.t.\ $\prec$. Let  $k$  be  the largest integer such that  $x_k^s\in E^s_j$ and $|x^s_{j+1} -x^s_k |=1$.  We decide where to add $x^s_{j+1}$ as follows:
\be\label{reveal}
\begin{cases}
\text{if $x^s_{j+1}$ is linked to $x^s_k$, then  $C^s_{j+1} :=\big( E^s_j \cup \{ x^s_{j+1}\}, F^s_j \big)$,}\\
\text{if $x^s_{j+1}$ is not linked to $x^s_k$, then $C^s_{j+1} :=\left( E^s_j, F^s_j \cup \{ x^s_{j+1}\} \right)$\,.}
\end{cases}
\en

On the other hand, if property $\cP^s_j$ is not verified, then we set $J^s:=j$ and $C_j^s=C_{j+1}^s=\cdots= C_{\sharp\Lambda_L'}^s$.

Let us now consider the random  graph $\mathfrak{G}_L$ with vertex set $\Lambda'_L\setminus\bigcup_{s=0}^{M-1} F^s_{\sharp\Lambda'_L}$ and with edges between vertices at Euclidean distance $1$. According to \cite{Tanemura93}*{Section~4}, the following statement is easy to see and, therefore, we leave its proof to the reader.

\begin{Claim}[Correctness]\label{claim:tanemura}
The maximal number $N_{L}$ of vertex-disjoint paths in the random graph $\mathfrak{G}_L$ from $\partial_l\Lambda_L'$ to $\partial_r\Lambda_L'$ in $\Lambda_L'$ is equal to the number of $s\in\{0,\dots,M-1\}$ such that $E^s_{\sharp\Lambda_L'}\cap \partial_r\Lambda_L'\neq\varnothing$.
\end{Claim}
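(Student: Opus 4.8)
\medskip
\noindent\textbf{Proof strategy.} The plan is to establish the two inequalities $N_L\ge\sharp S$ and $N_L\le\sharp S$, where $S:=\{s\in\{0,\dots,M-1\}:E^s_{\sharp\Lambda_L'}\cap\partial_r\Lambda_L'\neq\varnothing\}$ is the set of ``successful'' levels. For the lower bound, fix $s\in S$ and abbreviate $E^s:=E^s_{\sharp\Lambda_L'}$. By construction $E^s$ is obtained from $\{x_1^s\}$ by successively adjoining a single point of $\D E^s_j$, so it is a connected subset of $\bbZ^2$; it contains $x_1^s\in\partial_l\Lambda_L'$ and, since $s\in S$, it meets $\partial_r\Lambda_L'$, hence it contains a path in $\Lambda_L'$ from $\partial_l\Lambda_L'$ to $\partial_r\Lambda_L'$. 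The bookkeeping set $W^s_j$ guarantees that a point adjoined at level $s$ lies outside every point already revealed at a lower level $s'<s$ (those sit in $E^{s'}\cup F^{s'}$) and outside every point revealed earlier at level $s$; symmetrically, for $s'>s$ the set $W^{s'}_j$ contains $E^s\cup F^s$. Hence the sets $E^s$, $s\in S$, are pairwise disjoint and each is disjoint from $\bigcup_{s'}F^{s'}_{\sharp\Lambda_L'}$, i.e.\ contained in the vertex set of $\mathfrak{G}_L$; the corresponding $\sharp S$ crossings are pairwise vertex-disjoint paths of $\mathfrak{G}_L$, so $N_L\ge\sharp S$.

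\smallskip
\noindent For the upper bound I would first record two structural facts. First, every left-boundary site $x_1^s=(0,s)$ is revealed, since it belongs to $E^s_1$ or $F^s_1$; hence the unrevealed set $U:=\Lambda_L'\setminus\bigcup_s(E^s\cup F^s)$ is disjoint from $\partial_l\Lambda_L'$. Second, if $s\notin S$ and $x_1^s$ is occupied, the exploration at level $s$ stopped because $(\Lambda_L'\cap\D E^s)\setminus W^s_{J^s}=\varnothing$; as $W^s_{J^s}$ is contained in the set of (eventually) revealed sites, every neighbour of $E^s$ inside $\Lambda_L'$ is revealed, and in particular $E^s$ has no neighbour in $U$. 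Using these facts one checks that every crossing of $\mathfrak{G}_L$ meets $\bigcup_{s\in S}E^s$: it starts at an occupied $x_1^{s_0}\in E^{s_0}$; if $s_0\notin S$ it must leave the $\partial_r$-free set $E^{s_0}$ through a vertex of $\mathfrak{G}_L$, hence into some $E^{s''}$ (not into $U$, not into any $F$-set), and were every $E$-region it visits failed, the whole crossing would stay inside $\bigcup_{s\notin S}E^s$, which misses $\partial_r\Lambda_L'$---a contradiction.

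\smallskip
\noindent It remains to upgrade ``every crossing meets a successful region'' into the counting bound $N_L\le\sharp S$. Here I would invoke planar duality for $\bbZ^2$ (equivalently, Menger's theorem for $\mathfrak{G}_L$ viewed as a planar graph): the failed exploration regions together with the revealed closed sites $\bigcup_s F^s$ assemble into a $*$-connected closed barrier whose strands cut $\Lambda_L'$ into pieces each meeting at most one successful $E^s$, so no family of vertex-disjoint crossings of $\mathfrak{G}_L$ can exceed $\sharp S$ members; concretely, one would extract from this barrier an explicit vertex cut of $\mathfrak{G}_L$ of cardinality $\sharp S$.

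\smallskip
\noindent The main obstacle is precisely this last step. The lower bound is immediate from the disjointness hard-wired into the exploration, whereas the upper bound requires the classical but somewhat delicate planar-topology bookkeeping on $\bbZ^2$: the interplay between ordinary connectivity of the open crossings and $*$-connectivity of the closed barriers, together with the non-rectangular shape of $\Lambda_L'$ and its boundary decomposition $\partial_l\Lambda_L'\cup\partial_r\Lambda_L'$. This is where the ``easy to see'' of \cite{Tanemura93}*{Section~4} hides the actual work.
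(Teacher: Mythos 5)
The paper itself gives no proof of this claim (it is explicitly left to the reader, with a pointer to \cite{Tanemura93}*{Section~4}), so your proposal has to be judged on its own terms. The lower bound $N_L\ge\sharp S$ is correct and complete: each successful $E^s$ is connected, joins $x_1^s\in\partial_l\Lambda_L'$ to $\partial_r\Lambda_L'$, and the $W$-bookkeeping makes the sets $E^s\cup F^s$ pairwise disjoint, so the resulting crossings are vertex-disjoint and avoid all $F$-sets. Your intermediate observation for the converse --- that a failed $E^s$ has all its $\Lambda_L'$-neighbours revealed, hence every crossing of $\mathfrak G_L$ must meet some successful $E^s$ --- is also correct and genuinely useful. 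But this only shows that the successful regions form a "cut set of regions", not that $N_L\le\sharp S$, and the step you yourself flag as the main obstacle is exactly the one that carries the content of the claim. (It is needed by the paper: Corollary~\ref{cor:domination} uses the inequality $\sharp S\ge N_L'$ for the site-percolation exploration, i.e.\ the hard direction.)

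The route you sketch for that step would not work as described. First, the "barrier" you propose (failed $E$-regions together with $\bigcup_s F^s$) is not a barrier: failed $E$-regions consist of vertices of $\mathfrak G_L$ and are freely traversable by crossings, and producing "an explicit vertex cut of cardinality $\sharp S$" is, by Menger, equivalent to the bound you are trying to prove --- no construction is offered. Second, and more tellingly, your argument nowhere uses the order $\prec$ of Section~\ref{total_order} nor the fact that $\cP^s_j$ forces the exploration to \emph{stop the moment $E^s_j$ touches $\partial_r\Lambda_L'$}; yet the equality is false without them. For instance, with two adjacent fully open/linked rows $\{(x,0)\},\{(x,1)\}$, $0\le x\le M+L$, an exploration from $(0,0)$ that zigzags between the rows (a legal order other than $\prec$) absorbs both rows before reaching $\partial_r\Lambda_L'$, leaving $E^1=\{(0,1)\}$ and $\sharp S=1$, while $\mathfrak G_L$ contains two vertex-disjoint crossings; the same happens if the exploration is not stopped upon reaching $\partial_r\Lambda_L'$. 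So any correct proof of $N_L\le\sharp S$ must exploit that $\prec$ makes each $E^s$ hug the previously explored region and terminate as an extremal ("lowest") crossing, which then planarly separates the remaining starting points from the crossings already found; this permits an induction matching vertex-disjoint crossings injectively to successful levels. That planar, order-dependent induction is missing from your proposal.
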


\subsection{Probabilistic bounds}\label{sec_prob_bounds}
Before studying crossings under the measure $\bbQ_{L}$, we need a few preliminaries on the Bernoulli site percolation measure $\bbP_p$ with parameter $p\in[0,1]$ on $\{0,1\}^{\bbZ^2}$. We denote $\pc(2)=\inf\{p>0:\bbP_p(0\leftrightarrow\infty)>0\}$.

\begin{Lemma}
\label{lem:crossing}
    Let $p_0>\pc(2)$. Then there exists $c_0>0$ such that, for all $M,L\ge 1$ we have
    \[\bbP_{p_0}(\partial_l\Lambda'_L\leftrightarrow\partial_r\Lambda'_L\text{ in }\Lambda'_{L})\ge 1-e^{-c_0M}.\]
\end{Lemma}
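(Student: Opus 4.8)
\textit{Proof plan.} The plan is to argue by planar duality together with the sharp exponential decay of subcritical percolation (here applied to the dual, $*$-connected closed sites), exploiting the precise geometry of $\Lambda'_L$ to get a bound that is uniform in $L$.

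First I would describe the shape of $\Lambda'_L$. It is a simply connected polyomino, so its boundary is a simple closed curve, on which $\partial_l\Lambda'_L$ and $\partial_r\Lambda'_L$ are two disjoint arcs; they are separated by two further arcs, the \emph{upper arc} $\gamma_+$ — made up of the top side $\{0,\dots,M-1\}\times\{M-1\}$ of the square $[0,M-1]^2$ together with the ``chimney'' $\{M\}\times\{M,\dots,M+L\}$ — and the symmetric \emph{lower arc} $\gamma_-$, which lies in $\{y\le 0\}$, while $\gamma_+\subset\{y\ge M-1\}$. By the site-percolation analogue of planar duality (see e.g.\ \cite{Kesten82}), if there is no open path from $\partial_l\Lambda'_L$ to $\partial_r\Lambda'_L$ inside $\Lambda'_L$, then there is a $*$-path of closed sites inside $\Lambda'_L$ joining a site $u$ that is $*$-adjacent to $\gamma_+$ to a site $v$ that is $*$-adjacent to $\gamma_-$.

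The key point is a geometric one. Since $u,v$ lie in a common closed $*$-cluster, $y(u)\ge M-2$ and $y(v)\le 1$, that cluster reaches $\ell^\infty$-distance at least $M-3$ from $u$; moreover, if $u$ is $*$-adjacent to the chimney part of $\gamma_+$ at height $M-1+h$ with $h\ge 1$, then the cluster reaches distance at least $M-3+h$ from $u$. For $p_0>\pc(2)$ the closed sites are subcritical for the matching lattice of $\bbZ^2$, since $\pc^{\mathrm{site}}(\bbZ^2)+\pc^{\mathrm{site}}(\text{matching lattice})=1$; hence there is $\mu=\mu(p_0)>0$ with $\bbP_{p_0}(\text{the closed }*\text{-cluster of a fixed site reaches distance}\ge r)\le e^{-\mu r}$ for all $r\ge 1$. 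I would then union bound over $u$: the sites $*$-adjacent to $\gamma_+$ along the top of the square number $O(M)$ and have $h=0$, whereas along the chimney there are only $O(1)$ of them at each height $M-1+h$ with $h\ge 1$. Thus
\[
\bbP_{p_0}\big(\partial_l\Lambda'_L\not\leftrightarrow\partial_r\Lambda'_L\text{ in }\Lambda'_L\big)\le \sum_{u}e^{-\mu(M-3+h(u))}\le e^{-\mu(M-3)}\Big(C M+C\sum_{h\ge 1}e^{-\mu h}\Big)\le C'(M+1)\,e^{-\mu M},
\]
with $C,C'$ depending only on $p_0$ — and, crucially, with no dependence on $L$, because the geometric observation turns the unboundedly many potential endpoints up the chimney into a convergent geometric series. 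This gives the claim with any $c_0<\mu$ for all $M$ larger than some constant $M_1=M_1(c_0)$.

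For the finitely many values $M<M_1$ I would invoke a uniform-in-$L$ lower bound $\bbP_{p_0}(\partial_l\Lambda'_L\leftrightarrow\partial_r\Lambda'_L\text{ in }\Lambda'_L)\ge\delta_0>0$ (for $L$ bounded this is a fixed finite event of positive probability; for $L$ large it follows from the standard uniform lower bound on one-arm probabilities in the supercritical phase), and then shrink $c_0$ so that $1-e^{-c_0M}\le\delta_0$ for all such $M$. The main difficulty is exactly this uniformity in $L$: a naive Peierls/contour bound can fail outright for site percolation (one may have $8(1-p_0)>1$) and, even using the sharp subcritical decay, would carry a prefactor proportional to the perimeter of $\Lambda'_L$; both defects are cured by combining the sharp decay with the observation that a blocking dual path reaching far up the chimney is forced to be correspondingly long.
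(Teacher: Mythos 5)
Your proof is correct and follows essentially the same route as the paper's: planar duality to the square-diagonal (matching) lattice, the fact that its critical probability is $1-\pc(2)$, exponential decay in the subcritical phase, and a union bound over starting points of the blocking $*$-path. You simply spell out the details the paper leaves implicit — in particular the geometric observation that a dual path starting high up the ``chimney'' must be correspondingly long, which makes the union bound uniform in $L$ — and you handle the small-$M$ regime explicitly (via a uniform positive lower bound on the crossing probability and shrinking $c_0$), a point the paper's terse proof glosses over.
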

\begin{proof}
We opt for a short, albeit not very robust proof specific to the present setting. By duality (cf.~\cite{Kesten82}*{Proposition~2.2}), we need to prove that, for all $p'<1-\pc(2)$, in the square-diagonal lattice (which is the matching lattice of the square lattice, cf.~\cite{Kesten82}*{Chapter~2}), the $\bbP_{p'}$-probability of connecting the remaining two pieces of the boundary of $\Lambda'_L$ is at most $e^{-\k M}$ for some $\k >0$. This follows by a union bound, exponential decay in the subcritical phase \cite{Aizenman87} and the fact that the critical probability of the square-diagonal lattice is $1-\pc(2)$ (see \cite{Russo81} or \cite{Kesten82}*{Chapter 3}).
\end{proof}
For the next result we recall the definition of $I_l(A)$,  the  interior of depth $l$  of an event $A$ in $\{0,1\}^m$: 
$I_l(A)$ is given by the configurations $\s$ in $A$ such that, by changing at most $l$ entries of $\s$, the new configuration is still in $A$ (cf.~\cite{Aizenman83}, \cite{Grimmett99}*{Section~2.6}).

\begin{Lemma}[\cite{Aizenman83}*{Lemma~4.2}]
\label{lem:modification:distance}
    Let $m,l$ be positive integers, and let $p>\d>0$. Let $A$ be an increasing event on $\{0,1\}^m$ equipped with the product Bernoulli measure $\bbP_p$ of parameter $p$. Then,
    \[\bbP_p\left(I_l(A)\right)\ge 1-(1-\bbP_{p-\d}(A))/{\d}^l.
    \]
\end{Lemma}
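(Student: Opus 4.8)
Since $\{0,1\}^m$ is finite, there are no measurability issues, and I would argue by a ``de-sprinkling'' coupling in the spirit of \cite{Aizenman83}. The starting observation is that, because $A$ is increasing, a configuration $\s$ lies in $I_l(A)$ if and only if $\s\in A$ and every configuration obtained from $\s$ by turning at most $l$ \emph{open} coordinates into closed ones still lies in $A$ (turning closed coordinates into open ones cannot take us out of $A$, so it suffices to control closings). Hence, if $\s\notin I_l(A)$, one may fix by any deterministic rule a \emph{witness set} $S(\s)$ consisting of at most $l$ coordinates on which $\s$ is open and such that the configuration $\s^{S(\s)}$ obtained from $\s$ by closing exactly these coordinates satisfies $\s^{S(\s)}\notin A$.

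Next I would set up the coupling. Let $\o$ have law $\bbP_p$ and build $\eta$ from $\o$ by independently closing each open coordinate of $\o$ with probability $\d/p$ (and leaving the closed coordinates closed). Then $\bbP(\eta_i=1)=p(1-\d/p)=p-\d$, with independence over $i$, so $\eta$ has law $\bbP_{p-\d}$; moreover $\eta\le\o$ coordinatewise, and, conditionally on $\o$, the coordinates at which $\eta$ differs from $\o$ form a $(\d/p)$-thinning of the open coordinates of $\o$.

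The main estimate is then immediate. On the event $\{\o\notin I_l(A)\}$, conditionally on $\o$, the probability that every coordinate of the witness set $S(\o)$ gets closed in passing to $\eta$ equals $(\d/p)^{|S(\o)|}\ge(\d/p)^l$, using $|S(\o)|\le l$ and $0<\d/p<1$. When this occurs, $\eta\le\o^{S(\o)}\notin A$, hence $\eta\notin A$ since $A$ is increasing. Therefore $\bbP(\eta\notin A\mid\o)\ge(\d/p)^l\,\1\{\o\notin I_l(A)\}$, and taking expectations gives
\[
1-\bbP_{p-\d}(A)=\bbP(\eta\notin A)\ge(\d/p)^l\,\bbP_p\bigl(I_l(A)^c\bigr),
\]
whence $\bbP_p(I_l(A)^c)\le(p/\d)^l\bigl(1-\bbP_{p-\d}(A)\bigr)\le\bigl(1-\bbP_{p-\d}(A)\bigr)/\d^l$ because $p\le1$, which is the claim.

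I do not expect a genuine obstacle here: the argument is elementary once the coupling is chosen. The only points that need care are the monotonicity reduction that lets a single witness set of size at most $l$ encode the event $\{\s\notin I_l(A)\}$, and keeping straight that $0<\d/p<1$, so that the bound $(\d/p)^{|S(\o)|}\ge(\d/p)^l$ runs in the right direction.
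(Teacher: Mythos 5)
Your proof is correct, and it is essentially the standard sprinkling/coupling argument of the cited reference \cite{Aizenman83}*{Lemma~4.2} (the paper itself states the lemma without proof, deferring to that source). Both the monotonicity reduction to witness sets of closings only and the $(\d/p)$-thinning coupling are handled correctly, including the direction of the inequality $(\d/p)^{|S(\o)|}\ge(\d/p)^l$ and the final use of $p\le 1$.
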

The next lemma is an application of Lemmas~\ref{lem:crossing} and~\ref{lem:modification:distance} following \cite{Chayes86}*{Lemma~3.3}.
\begin{Lemma}\label{lemma_pistacchio}
Let $p>\pc(2)$. Denote by $N_L'$ the number of vertex-disjoint open site percolation crossings from $\partial_l\Lambda'_L$ to $\partial_r\Lambda'_L$ in $\Lambda'_L$. Then there exist $\k,\k'>0$ such that for all positive $L,M$, we have $\bbP_p(N_L'\geq \k  M) \geq 1- e^{-\k' M}$.
\end{Lemma}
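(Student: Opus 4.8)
The plan is to combine Lemma~\ref{lem:crossing}, Lemma~\ref{lem:modification:distance} and Menger's theorem, following \cite{Chayes86}*{Lemma~3.3}. Fix $p>\pc(2)$ and choose $\delta:=(p-\pc(2))/2$, so that $p-\delta>\pc(2)$ and $0<\delta<1$; let $c_0>0$ be the constant furnished by Lemma~\ref{lem:crossing} for the parameter $p_0:=p-\delta$. I will write $A_L$ for the crossing event $\{\partial_l\Lambda'_L\leftrightarrow\partial_r\Lambda'_L\text{ in }\Lambda'_L\}$, which is increasing and depends only on the $\sharp\Lambda'_L$ coordinates indexed by the sites of $\Lambda'_L$; thus Lemma~\ref{lem:modification:distance} applies to it, and Lemma~\ref{lem:crossing} gives $\bbP_{p-\delta}(A_L)\ge 1-e^{-c_0M}$.

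The crucial point is a deterministic claim: for every integer $l\ge 0$, any configuration $\sigma\in I_l(A_L)$ satisfies $N_L'(\sigma)\ge l+1$. I would prove this by contraposition. Suppose $N_L'(\sigma)=k$; applying Menger's theorem to the subgraph of $\Lambda'_L$ induced by the open sites of $\sigma$, with terminal vertex sets $\partial_l\Lambda'_L$ and $\partial_r\Lambda'_L$, yields a set $C$ of $k$ open sites meeting every open crossing. Closing the sites of $C$ changes at most $k$ coordinates and leaves no open crossing, hence produces a configuration outside $A_L$; consequently, if $\sigma\in I_l(A_L)$ then $k\ge l+1$.

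Granting this, fix a constant $\kappa\in(0,c_0/\log(1/\delta))$ and set $\kappa':=c_0-\kappa\log(1/\delta)>0$. For $M\ge 1$ let $l:=\lceil\kappa M\rceil-1\ge 0$, so that $l<\kappa M$ and $\delta^{-l}=e^{l\log(1/\delta)}\le e^{\kappa M\log(1/\delta)}$. Chaining the deterministic claim, then Lemma~\ref{lem:modification:distance} applied to $A_L$ with parameters $p$ and $\delta$, and then $\bbP_{p-\delta}(A_L)\ge 1-e^{-c_0M}$, we obtain
\[
\bbP_p(N_L'\ge\kappa M)\ge\bbP_p(N_L'\ge l+1)\ge\bbP_p\big(I_l(A_L)\big)\ge 1-\frac{1-\bbP_{p-\delta}(A_L)}{\delta^l}\ge 1-e^{-c_0M}e^{\kappa M\log(1/\delta)}=1-e^{-\kappa'M},
\]
where the case $l=0$ is included since $I_0(A_L)=A_L$ and $\kappa'<c_0$. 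No quantity here depends on $L$, so the bound is uniform in $L$, which is exactly the statement of Lemma~\ref{lemma_pistacchio}.

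I expect the only non-mechanical step to be the deterministic claim relating $I_l(A_L)$ to the maximal number of vertex-disjoint open crossings; this is precisely where Menger's theorem (equivalently, max-flow/min-cut with unit vertex capacities on $\Lambda'_L$) is used, and one must take a little care that the separating set produced by Menger can be taken among the open sites, so that closing it is an admissible modification of bounded size. Everything else is a routine concatenation of Lemmas~\ref{lem:crossing} and~\ref{lem:modification:distance}.
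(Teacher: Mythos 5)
Your proof is correct and follows essentially the same route as the paper: Menger's theorem to relate $I_l(A_L)$ to the number of vertex-disjoint crossings, then Lemma~\ref{lem:modification:distance} combined with Lemma~\ref{lem:crossing}, with the same choice $\kappa'=c_0-\kappa\log(1/\delta)$. Your treatment is in fact slightly more careful than the paper's (explicit integer rounding of $\kappa M$ and an explicit proof of the deterministic inclusion $I_l(A_L)\subseteq\{N_L'\ge l+1\}$, which is the only direction needed).
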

\begin{proof}[Proof of Lemma~\ref{lemma_pistacchio}]
Setting $p_0:=(p+\pc(2))/2$, let $c_0>0$ be as in Lemma~\ref{lem:crossing}.
Let $A:=\{\partial_l\Lambda_L'\leftrightarrow\partial_r\Lambda_L'\text{ in }\Lambda_L'\}$. By Menger's theorem, for $\k>0$ to be chosen later, we have $\{N_L' >\k M\}=I_{ \k M}(A)$. By Lemma~\ref{lem:modification:distance}, applied with $\d:=p-p_0$, and then Lemma~\ref{lem:crossing}, we have
\[\bbP_p(N_L'\ge \k M)\ge 1-(1-\bbP_{p-\d}(A))/\d^{\k M}\ge 1-e^{-c_0M}\d^{-\k M}=1-e^{-\k 'M}\]
for $\k':=c_0+\k\ln \d$, which can be made positive by choosing $\k>0$ small enough.
\end{proof}

In order to pass the result of Lemma~\ref{lemma_pistacchio} to $\bbQ_L$, we introduce the following notion.

\begin{Definition}[Domination]
\label{def:domination}
Let $p\in[0,1]$. We say that a probability measure $\bbQ_{L}$ as in Section~\ref{subsec:exploration} \emph{dominates} $p$, if:
\begin{itemize}
    \item going from $s=0$ to $s=M-1$ the probability that $x^s_1$ is occupied, conditioned to the revealed status (i.e.\ occupied or not occupied) of $x_1^{s'}$ with $s'<s$, is at least $p$,
    \item every time we check which one of the two cases in \eqref{reveal} is valid, the probability that $x^s_{j+1}$ is linked to $x_k^s$,  conditioned to the accumulated information (i.e.\ to the knowledge of $C^a_1$ with $0\leq a\leq M-1$, of $C_r^{s'}$ with $0 \leq s' <s$ and $1<r\leq \sharp\Lambda_L'$ and of $C_r^s$ with $1<r\leq j$), is at least $p$.
\end{itemize}
\end{Definition}

\begin{Corollary}
\label{cor:domination}    Assume that the measure $\bbQ_L$ dominates some $p>\pc(2)$. Recall $N_L$ from Claim~\ref{claim:tanemura}. Then there exist $\k ,\k '>0$ such that for all positive $M,L$, we have 
    \[\bbQ_L(N_L\ge \k M)\ge 1-e^{- \k 'M}.\]
\end{Corollary}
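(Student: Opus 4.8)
The plan is to dominate the exploration of Section~\ref{subsec:exploration}, run under $\bbQ_L$, from below by an i.i.d.\ Bernoulli site percolation of parameter $p$, and then to invoke Lemma~\ref{lemma_pistacchio}. Fix $p>\pc(2)$ dominated by $\bbQ_L$ and let $\k,\k'>0$ be the constants Lemma~\ref{lemma_pistacchio} produces for this $p$; note they depend only on $p$, hence are uniform in $L$ and $M$. The first point I would record is a structural property of the exploration: the status of each site of $\Lambda'_L$ (its occupancy, or whether it is linked to the relevant already-explored neighbour) is revealed at most once during the whole procedure. Indeed, $x_1^0,\dots,x_1^{M-1}$ are queried once each in the initial block, and thereafter $x^s_{j+1}$ is chosen inside $\D E^s_j\setminus W^s_j$, where $W^s_j$ collects every site used so far in the current exploration and in all earlier ones; hence no site is queried twice. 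This is exactly what makes the domination hypothesis usable.

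Next I would build a coupling by revealing data in the order in which the exploration queries sites. Whenever the exploration is about to decide the status of a site $x$ --- whether $x_1^s$ is occupied, or whether $x^s_{j+1}$ is linked to $x^s_k$ --- I draw a fresh uniform $U_x$ on $[0,1]$, independent of all data revealed so far, set $\omega_x:=\1(U_x\le p)$, and declare $x$ occupied/linked if and only if $U_x\le q_x$, where $q_x$ is the conditional probability of that event given the accumulated information. Definition~\ref{def:domination} gives $q_x\ge p$, so $\{\omega_x=1\}\subset\{x\text{ occupied/linked}\}$; since $x$ was never queried before, $U_x$ is genuinely new, so the $\bbQ_L$-process keeps its law, while, after completing $\omega$ with independent coins on never-queried sites, $\omega$ is an i.i.d.\ Bernoulli($p$) field on $\Lambda'_L$.

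The crucial observation is then that, in this coupling, every $x\in\Lambda'_L$ with $\omega_x=1$ lies in the vertex set $\Lambda'_L\setminus\bigcup_{s}F^s_{\sharp\Lambda'_L}$ of $\mathfrak{G}_L$: if $x$ is queried it is found occupied/linked and added to some $E^s$, never to an $F^s$; if it is never queried it lies in no $F^s$ either. Hence the open subgraph of $\Lambda'_L$ under $\omega$ is a subgraph of $\mathfrak{G}_L$, so any family of vertex-disjoint open left--right crossings of $\Lambda'_L$ is a family of vertex-disjoint crossings of $\mathfrak{G}_L$; combined with Claim~\ref{claim:tanemura} this yields $N_L\ge N_L'$ pointwise in the coupling, with $N_L'$ as in Lemma~\ref{lemma_pistacchio}. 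Therefore $\bbQ_L(N_L<\k M)\le\bbP_p(N_L'<\k M)\le e^{-\k'M}$, as required.

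I expect the only genuinely delicate point to be the bookkeeping of the sequential revelation --- verifying that the identity of the next queried site (and of the neighbour $x^s_k$ against which its link is tested) is measurable with respect to the information revealed so far, so that the freshly drawn uniform is legitimately independent of it and the coupling preserves the law of the $\bbQ_L$-exploration. Everything else follows immediately from the construction, from Claim~\ref{claim:tanemura}, and from Lemma~\ref{lemma_pistacchio}.
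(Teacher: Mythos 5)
Your proof is correct and follows essentially the same route as the paper: a monotone coupling of the exploration with i.i.d.\ Bernoulli site percolation of parameter $p$ (which the paper delegates to \cite{Grimmett90}*{Lemma~1} and you construct explicitly via sequentially revealed uniforms), yielding $N_L\ge N_L'$ pointwise, followed by Lemma~\ref{lemma_pistacchio}. Your observation that every site is queried at most once and that all $\omega$-open sites lie in the vertex set of $\mathfrak{G}_L$ is exactly the content needed to make the coupling work.
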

\begin{proof}
    In view of Definition~\ref{def:domination}, there is a monotone coupling of the random graph $\mathfrak G_L$ under $\bbQ_L$ with the open site-percolation clusters intersecting $\partial_l\Lambda'_L$ under $\bbP_p$ (cf.~e.g.~\cite{Grimmett90}*{Lemma~1}). Thanks to Claim~\ref{claim:tanemura} applied to $\bbP_p$, under this coupling we have $N_L\ge N_L'$ a.s. The result then follows from Lemma~\ref{lemma_pistacchio}.
\end{proof}

\section{Proof of Theorem~\ref{teo_crossings}}\label{proof_teo_crossings}
In this section we suppose Assumption~\ref{assumere} to be true and aim to prove Theorem ~\ref{teo_crossings} for  $\l'>\l$ (instead of $\rho>\l$), by combining the results and tools of the previous Sections  \ref{sec_preliminaries}, \ref{sec_connections} and  \ref{sec_tanemura}. 

We take the scales $N\gg n\gg K\gg k\gg1/\varepsilon\gg 1/c\gg 1$ as in \eqref{eq:scales} in Section~\ref{sec_connections}, further chosen below depending on a fixed $p_0\in(\pc(2),1)$.  We also consider the slab $\Sigma:=\bigcup_{(x,y)\in \bbZ^2} \L_{6N}(12N(x,y,0,\dots,0))$.

By standard arguments (cf.~e.g.~\cite{Chayes86}*{Lemma 3.3}, \cite{Faggionato21}*{Proposition 3.6}), in order to prove Theorem~\ref{teo_crossings}, it is enough to show the following: 
 for any $\l'>\l$,  there exist $c_1,c_2>0$ such that, for all $\ell$ large enough, we have
\be\label{2d_bound}
P_{\lambda',\nu} \otimes Q\left( \cM_{\ell}   \geq  c_1 \ell\right)\geq 1- \exp\left(- c_2 \,\ell \right)\,,
\en
where $\cM_{\ell}$ denotes the maximal number of vertex-disjoint LR crossings of $\L_{\ell}$ contained in the slab $\Sigma$  and which can be extended to paths to infinity in $\Sigma$ (Assumptions~\ref{assumere}\ref{ass:irreducible} and \ref{ass:supercritical} imply that $P_{\l',\nu}$--a.s.\ there is a unique infinite cluster, these paths must lie in the infinite cluster).

To prove \eqref{2d_bound}, we combine the exploration process of Section \ref{sec_tanemura} and the connection estimates of Section~\ref{sec_connections}. To simplify the presentation, but without loss of generality, we suppose that $2\ell$ is a multiple of $12 N$ and write $2\ell=12 N M$ with $M$ positive integer. 

On a common probability space with law $P$, we consider independent PPPs  $\xi,  \xi_1,\xi_2,\dots, \xi_{400d}$ on 
 $\bbR^d\times\bbM$ with intensity measures $\l\cL \otimes\nu$ for $\xi$ and $\frac{\l'-\l}{400d}\cL \otimes\nu$ for $\xi_i$.  
 We  define
$ \xi_{( i,j]} :=\bigcup_{k=i+1}^j \xi_k$.
  Note that their superposition $ \xi^*:=\xi\cup \xi_{(0,400d]}$   has law $P_{\l',\nu}$.
  
Given a graph $G=(V,E)$ in $\bbR^d\times\bbM$ and given $A\subset \bbR^d\times \bbM$, we write $G|A$ for the restriction of $G$ to $A$, which is the graph with vertex set $V\cap A$ and edge set $\{\{t,t'\}\in E:t,t'\in A\}$.   We fix a deterministic seed $S_0\subset \L^0\times\bbM_g$ where $\L^0:=[-13N/2,-6N]\times[-4N,4N]^{d-1}\times\bbM_g$ (recall \eqref{set_good_marks}, Definition~\ref{def:seed} and see Figure~\ref{fig:renorm}).
 
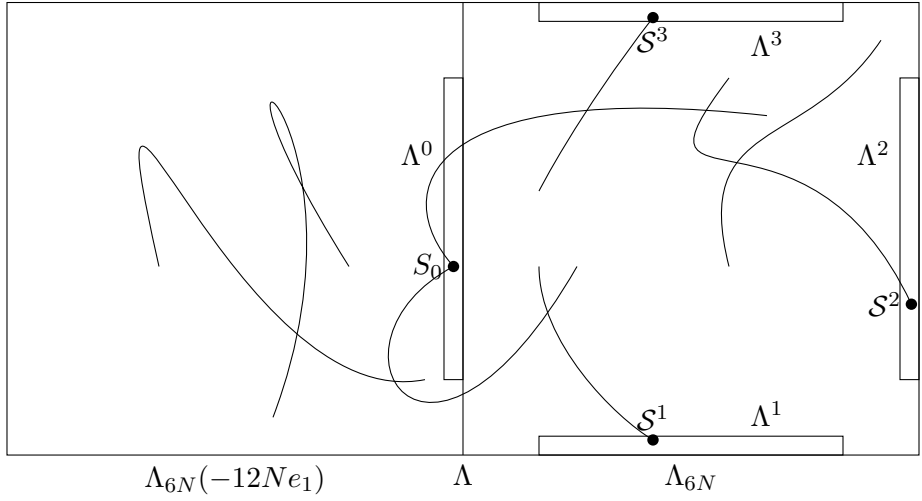
\begin{figure}
\centering
\begin{tikzpicture}[scale=0.5]
\draw (-18,-6) rectangle (6,6);
\draw (-6,-6) -- (-6,6);
\draw (-13/2,-4) rectangle (-6,4);
\draw (-4,-6) rectangle (4,-11/2);
\draw (11/2,-4) rectangle (6,4);
\draw (4,6) rectangle (-4,11/2);
\draw (-12,-6) node[below]{$\Lambda_{6N}(-12Ne_1)$};
\draw (0,-6) node[below]{$\Lambda_{6N}$};
\draw (-13/2,2) node[left]{$\Lambda^0$};
\draw (2,-11/2) node[above]{$\Lambda^1$};
\draw (11/2,2) node[left]{$\Lambda^2$};
\draw (2,11/2) node[below]{$\Lambda^3$};
\draw (-6,-6) node[below]{$\Lambda$};
\fill (-25/4,-1) node[left]{$S_0$}circle (0.15);

\draw (-25/4,-1) .. controls (-10,-3) and (-7,-8) .. (-3,-1);
\draw (-4,-1) .. controls (-4,-3) and (-2,-5) .. (-1,-5.6);
\fill (-1,-5.6) node[above]{$\cS^1$}circle (0.15);
\draw (-25/4,-1) .. controls (-8,1) and (-7,4) .. (2,3);
\draw (-4,1) .. controls (-4,1) and (-3,3) .. (-1,5.6);
\fill (-1,5.6) node[below]{$\cS^3$}circle (0.15);
\draw (1,4) .. controls (-2,0) and (3,4) .. (5.8,-2);
\fill (5.8,-2) node[left]{$\cS^2$}circle (0.15);
\draw (5,5) .. controls (3,2) and (0,3) .. (1,-1);
\draw (-7,-4) .. controls (-12,-5) and (-16,8) .. (-14,-1);
\draw (-11,-5) .. controls (-8,3) and (-14,7) .. (-9,-1);
\end{tikzpicture}
\caption{\label{fig:renorm}Illustration of Definition~\ref{def:occupied}. The last $d-2$ dimensions and marks are suppressed.}
\end{figure}
The next crucial definition is illustrated in Figure~\ref{fig:renorm}.
\begin{Definition}
\label{def:occupied}
Let $\Lambda:=\L_{6N}(-12N e_1)\cup\L_{6N}=[-18N,6N]\times[-6N,6N]^{d-1}$ and $\Lambda^1:=[-4N,4N]\times [-6N,-11N/2]\times[-4N,4N]^{d-2}$, $\Lambda^2:=[11N/2,6N]\times[-4N,4N]^{d-1}$, $ \Lambda^3:=-\Lambda^1$.
We say that the site $(0,0)$ is \emph{occupied} if, for each $i\in\{1,2,3\}$, there exists a seed \[\cS^i\subset \Big\{t\in \Lambda^i\times\bbM_g: t\leftrightarrow S_0 \text{ in  }G\big(S_0\cup  \xi \cup \xi_{(0, 100d]}\big) \text{ in } \Lambda\times\bbM\Big\}.\]
\end{Definition}

In order to satisfy Definition~\ref{def:domination}, we prove the following statement where $p_0\in (\pc(2), 1) $ has been fixed once and for all (recall that $\pc(2)$ is the critical probability of Bernoulli site percolation on $\bbZ^2$).
\begin{Lemma}\label{passo0} Choosing the scales in \eqref{eq:scales} suitably, it holds that $P(\text{$(0,0)$ is occupied})>p_0$.
\end{Lemma}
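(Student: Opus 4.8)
The plan is to prove that, with probability at least $p_0$, in the graph $G(S_0\cup\xi\cup\xi_{(0,100d]})$ the seed $S_0$ is connected inside $\Lambda\times\bbM$ to a seed contained in $\Lambda^i\times\bbM_g$, simultaneously for $i\in\{1,2,3\}$; since every seed has $\bbM_g$-valued marks by Definition~\ref{def:seed}, this is exactly the event that the site $(0,0)$ is occupied.

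The basic building block is a \emph{growth step}. Given a seed $S\subset\Lambda_{n-K}(w)\times\bbM_g$ and a target direction $(i,\sigma)$, Lemma~\ref{lemma_crescita} — applied after translating by $w$ and, if needed, a coordinate permutation or sign flip, which is licit by Assumptions~\ref{assumere}\ref{ass:stationary} and~\ref{assumere}\ref{ass:symmetry} — produces, with probability at least $1-e^{-c/\varepsilon}$, a path inside $\Lambda_N(w)\times\bbM$ from $S$ to the quarter-face $F_N^{i,\sigma}(w)\times\bbM$; by monotonicity of the connection event the bound persists when $\bbX$ is replaced by the denser process $\xi\cup\xi_{(0,100d]}$. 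Since $S$ lies within $\ell^\infty$-distance $n-K$ of $w$ while every point of $F_N^{i,\sigma}(w)$ is at $\ell^\infty$-distance at least $N-1$ from $w$, this path has $\ell^\infty$-diameter at least $\sqrt N-1$ (as $N\gg n$) and is contained in $\Lambda_{6N}(w)\times\bbM$; hence, on a translate $\cH_w$ of the decreasing event of Lemma~\ref{lem:seed:to:seed} — which has probability larger than $1-\varepsilon$ at the intensity in play by Remark~\ref{rem:seed:to:seed}, and is unaffected by adding the $\bbM_g$-marked set $S$ as it only concerns paths through $\bbM_g^c$-marked vertices — the path contains a new seed. Extracting this new seed from the portion of the path lying beyond a suitably chosen hyperplane orthogonal to $e_i$ guarantees a net advance of at least $N/3$ in the direction $\sigma_i e_i$, while bounding the total displacement of the seed; re-aiming $(i,\sigma)$ at each step then steers a growing cluster of $S_0$ through $\Lambda$ in a controlled way.

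I would then route, for each $i\in\{1,2,3\}$ (the three chains sharing an initial common part), a chain of a bounded number $C_0\le 100d$ of growth steps starting from $S_0\subset\Lambda^0\times\bbM_g$, arranged so that every box $\Lambda_N(w)$ used stays inside $\Lambda$, the moving seed never leaves $\Lambda$, and the last box of the $i$-th chain has its target quarter-face contained in $\Lambda^i$ — the latter being possible precisely because each $\Lambda^i$ has thickness at least $N/2\gg n$ in all but one coordinate direction, so a width-$1$ quarter-face fits inside its thin slab. The last growth step of the $i$-th chain then produces a connecting path whose portion beyond the final crossing of the bounding hyperplane of $\Lambda^i$ lies in $\Lambda^i\times\bbM$ and still has $\ell^\infty$-diameter at least $\sqrt N-1$, so Lemma~\ref{lem:seed:to:seed} extracts from it the required seed $\cS^i\subset\Lambda^i\times\bbM_g$, connected to $S_0$ inside $\Lambda$. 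To make the $C_0$ growth steps usable in a union bound — Lemma~\ref{lemma_crescita} being stated for a deterministic seed in a fresh box — each step is assigned its own sprinkling layer $\xi_j$ (there are $100d\ge C_0$ available in $\xi_{(0,100d]}$) and is carried out by a minimal-revelation exploration: one reveals only the cluster of the current seed, then applies Lemma~\ref{lemma_udine} with that cluster in the role of $A$, the restriction of $\xi$ to the current box in the role of $\bbX$, and $\xi_j$ in the role of $\tilde\bbX$, so that the bulk of the new connection is carried by points of $\xi$ decoupled from the already-explored region; each step then fails with conditional probability at most $e^{-c/\varepsilon}+\varepsilon$ given the past. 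A union bound over the $\le 100d$ steps gives
\[
P\big((0,0)\text{ is occupied}\big)\ \ge\ 1-100d\,\big(e^{-c/\varepsilon}+\varepsilon\big)\ >\ p_0,
\]
the last inequality holding once $\varepsilon$ is fixed small enough and then $k,K,n,N$ large enough, in accordance with \eqref{eq:scales}.

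The step I expect to be the main obstacle is this bookkeeping: one must route the three chains so that no box exits $\Lambda$ and the last box of each lands with its quarter-face inside the thin slab $\Lambda^i$; control the a priori adversarial displacement of the moving seed (hence the extraction of the new seed, at each step, from a carefully chosen sub-path rather than from the whole growth path); and interleave all of this with the sprinkling and the minimal-revelation exploration so that the $C_0$ growth steps are sufficiently independent, conditionally on the past, for the final union bound. Once this is in place the probability estimate itself is routine.
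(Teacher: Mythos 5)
Your proposal is correct and follows essentially the same route as the paper: iterate growth steps built from Lemma~\ref{lemma_crescita}, extract a fresh seed from the long connecting path via Lemma~\ref{lem:seed:to:seed}, restore independence after revealing the current cluster by sprinkling a fresh layer $\xi_j$ through Lemma~\ref{lemma_udine}, and conclude by accumulating the per-step error over a bounded (in $d$) number of steps while steering the seed positions so that all boxes stay in $\Lambda$ and the final faces land in the slabs $\Lambda^i$. The paper organizes the error accumulation as an iterated composition $f^{\circ m}(\alpha_1)$ with $f(x)=\alpha_2x-\varepsilon$ rather than a plain union bound, and its per-step failure probability comes out as $1-\alpha_2+\varepsilon$ with $\alpha_2$ from the sprinkling lemma rather than exactly $e^{-c/\varepsilon}$, but these are cosmetic differences.
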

\begin{proof}
Recall \eqref{eq:def:F}.  For $\sigma\in\{-1,1\}^d$ and $i\in\{1,\dots,d\}$, let 
\[\tilde F^{i,\sigma}_N:=\{\sigma_iN\}^{\{i\}}\times\prod_{j\in\{1,\dots,d\}\setminus\{i\}}(\sigma_j[0,N]),\]
i.e.~$\tilde F^{i,\sigma}_N$ is given by the sites $x=(x_1,x_2,\dots, x_d)\in F_N^{i,\sigma}$ with $x_i=\s_i N$. We omit $i,\sigma$, if $i=1$ and $\sigma=(1,\dots,1)$. Let $z_0\in\bbR^d$ be such that $S_0\subset\L_{n-K}(z_0)\times\bbM_g\subset \L^0\times\bbM_g$ and, without loss of generality, assume that $z_0\in\bbR\times(-\infty,0]^{d-1}$ (in the general case one just has to replace in what follows $F_N$ by $F_N^{1,\s}$ where $\s\in\{-1,1\}^d$ satisfies $\s_1=1$ and $\s_j z_0(j)\le
0$ for $1<j\leq d$ with $z_0=(z_0(1),\dots,z_0(d))$). Let $\cD$ be the event that $G(\xi^*)|\Lambda_{6N}$ belongs to the set of graphs $\cH_*$ from Lemma~\ref{lem:seed:to:seed}. We first seek to connect $S_0$ to a seed close to $(z_0+{\tilde F}_N)\times\bbM$. By Lemma~\ref{lemma_crescita}, we get
    \begin{equation}\label{russell1}
    P \Big(S_0\leftrightarrow  (z_0+F_N)\times\bbM \text{ in  } (z_0+\Lambda_N)\times \bbM 
    \text{ in } G( S_0 \cup \xi'))\Big )\ge 1-e^{-c/\varepsilon},
\end{equation}
where $\xi':=\xi\cap (\Lambda\times\bbM)$. We call $E_1$ the event in the l.h.s.\ of \eqref{russell1} and we set 
\[ \cC:=S_0\cup\Big\{t\in \xi'\,:\, t\leftrightarrow S_0 \text{ in }  G( S_0 \cup \xi')\Big\}\,.
\]

By Lemma \ref{lem:seed:to:seed}\ref{cond:proba} we can estimate
$P( E_1\cap \cD)\geq P(E_1)- P(\cD^c) \geq 1-e^{-c/\varepsilon}-\e$.
The event $E_1\cap \cD$ and the decreasing property of $\cH_*$ in Lemma \ref{lem:seed:to:seed}\ref{cond:decreasing}  imply that the graph  $G(S_0 \cup \xi')|(\cC\cap(\Lambda_{6N}\times \bbM))$  belongs to $\cH_*$ (using that $S_0\cap(\Lambda_{6N}\times\bbM)=\varnothing$). Moreover, $E_1$  implies that there is a path of length at least $\sqrt{N}-1$ inside $G(S_0 \cup \xi')|(\cC\cap(\Lambda_{6N}\times\bbM))$ which intersects $z_0+F_N$. Hence, by Lemma~\ref{lem:seed:to:seed}\ref{cond:deterministic}, $E_1\cap \cD$  implies the following event $E_2$: $\cC$ contains a seed 

\[
\cS_1\subset  \L_{n-K}(z_1) \times\bbM_g  \subset (z_0+\tilde F_N+\L_{\sqrt{N}})\cap \bbM_g
\]
(using the bounds on the position of $S_0$, which imply that $z_0+F_N$ is well inside $\Lambda_{6N}$).
Due to the above considerations we conclude that 
\be\label{suono1}
P(E_1\cap E_2)\geq 1-e^{-c/\varepsilon}-\e=:\a_1\,.
\en

Note that the event $E_1\cap E_2$ is measurable with respect to the $\s$-algebra $\cT$ generated by $G(S_0\cup\xi')|\cC$. Let us consider the conditional probability
\[
P_1:=P\Big(\cdot \,\Big|\,  G( S_0 \cup \xi')|\cC=(\bar S, \bar E)\Big)\,,\]
where the above conditioning event $\{G( S_0 \cup \xi')|\cC=(\bar S, \bar E)\}$  is compatible with $E_1\cap E_2$ (considering a  regular version of the conditional probability $P(\cdot|\cT)$).
Under this conditioning event, the seed $\cS_1$ is determined (we choose  $\cS_1$ in a measurable way) and we denote by $S_1$ its value. A crucial observation is that the process $\xi'\setminus\cC$ under $P_{1}$ is a PPP on $\bbR^d\times \bbM$  with intensity  measure $\l \bar\varphi(t, \bar S) \mu(\md t)$ where $\mu$ denotes the restriction to $\L$ of $\cL\otimes \nu$.

 Fix $u>0$ to be chosen later. We apply Lemma~\ref{lemma_udine} with $R:=\Lambda_N(z_1)\times\bbM$, $A:=\bar S
 $, $B:=(z_1+F^{1,\s^1}_N)\times \bbM$ with $\s^1$ the sequence of the signs of the coordinates of $-z_1$ (exceptionally, in this section we define  the sign of $0$ as $+1$),  $\tilde\bbX:=\xi_1 \cap R$ under $P_1$ (and therefore under $P$), $\bbY:=(\xi\setminus\bar S) \cap R$ under $P_1$,  $\bbX:=\xi \cap R $  under $P$. 
 By Lemmas~\ref{lemma_udine}  and \ref{lemma_crescita} 
we then get 
\begin{multline}\label{suono2}
P_1\left(\bar S\leftrightarrow(z_1+F^{1,\s^1}_N)\times\bbM\text{ in }G\left(\bar S, \xi_1\cap R\right)\cup G(((\xi\setminus\bar S)\cup\xi_1)\cap R)\right)\\
\begin{aligned}[t]
&{}\ge  \bbP\left(A \leftrightarrow(z_1+F^{1,\s^1}_N)\times\bbM\text{ in }G\left(A, \tilde\bbX\right)\cup G\left(\tilde\bbX\cup\bbY\right)\right)\\
&{}\ge \left(1-e^{-(\lambda'-\lambda) u/(400d)}\right)\left(1-e^{\lambda u} P\left(A\not\leftrightarrow (z_1+F^{1,\s^1}_N)\times\bbM 
\text{ in } G\left(\left(R\cap\xi\right)\cup A\right)\right)\right)\\
&{}\ge \left(1-e^{-(\lambda'-\lambda) u/(400d)}\right)\left(1-e^{\lambda u-c/\varepsilon}\right)=:\a_2.\end{aligned}\end{multline}
We take $u:=400d/((\lambda'-\lambda)\sqrt \varepsilon)$ and using that $\varepsilon$ is small enough depending on $\lambda',\lambda,d,c$ we can make $\a_2$ and $\a_1$ from \eqref{suono1} arbitrarily close to $1$.

Let  $E_3$ be the event that $\cC\leftrightarrow(z_1+F^{1,\s^1}_N)\times\bbM\text{ in }G\big(\cC, \xi_1\cap R\big)\cup G\big(((\xi\setminus\cC)\cup\xi_1)\cap R\big)$. By combining \eqref{suono1} and \eqref{suono2}, we get $ P(E_1\cap E_2\cap E_3) \geq \a_1\a_2$. 
Since $P(\cD^c)\leq \e$,  we have 
\[P(E_1\cap E_2\cap E_3\cap \cD) \geq \a_1\a_2-\e\,.\]
We call $\cC'$ the set of points $t\in S_0\cup\big((\xi\cup\xi_1)\cap (\Lambda\times\bbM)\big)$ such that $\cC\leftrightarrow t $ in the graph $\G:=G\left(S_0\cup\big( (\xi\cup\xi_1)\cap(\Lambda\times\bbM)\big)\right)$. The event $E_1\cap E_2\cap E_3 \cap \cD$ and the decreasing property of $\cH_*$ in Lemma \ref{lem:seed:to:seed}\ref{cond:decreasing} imply that the graph  $\G|(\cC'\cap(\Lambda_{6N}\times\bbM))$  belongs to $\cH_*$. Furthermore,  $E_1\cap E_2\cap E_3 \cap \cD$   implies that there is a path of length at least $\sqrt{N}-1$ inside $\Gamma|\cC'$ intersecting 
$z_1+F^{1,\s^1}_N$. Hence, by Lemma \ref{lem:seed:to:seed}\ref{cond:deterministic}, $E_1\cap E_2\cap E_3 \cap \cD$  implies the following event $E_4$: $\cC'$ contains a seed 
  $\cS_2\subset  \L_{n-K}(z_2)\times\bbM_g\subset (z_1+\tilde F_N^{1,\s^1}+\L_{\sqrt{N}})\times\bbM_g$. Note that by the above observations 
\be\label{suono3}
P( E_1\cap E_2\cap E_3\cap E_4)\geq \a_1\a_2-\e=\a_3\,.
\en

We repeat the same reasoning 4 more times (using a new $\xi_i$ each time) until we obtain a seed $\cS_6\subset \Lambda_{n-K}(z_6)\times\bbM_g\subset [-N/2-6\sqrt N,6\sqrt N]\times [-4N-6\sqrt N,4N+6\sqrt N]^{d-1}\times\bbM_g$. Note that the projection on the first coordinate of $\Lambda_{n-K}(z_6)$ is included in $[-2N,2N]$ (by taking $N$ large). We then seek to adjust the other coordinates  by repeating the same procedure another $4(d-1)$ times.  We start with the second coordinate. For $i\in\{6,7,8,9\}$ we try to connect $S_{i}$ with $z_i+F_N^{2,\s^i}$ where  $\sigma^i$ is the sign sequence of $-z_i$, thus leading to the seed $\cS_{i+1}\subset \L_{n-K}(z_{i+1})\times\bbM_g$. Then we do the same for the third coordinate and so on. Notice that, at each step when adjusting the $j$--th coordinate, $|z_{i+1}(j)|\le \max(|z_i(j)|-3N/4,3N/2)$, while $|z_{i+1}(j')|\le \max(|z_i(j')|+\sqrt N,3N/2)$ for $j'\in\{1,\dots,d\}\setminus\{j\}$.

Recalling that $N$ is taken large enough, this yields a seed
\[\cS_{4d+2
}\subset\Lambda_{n-K}(z_{4d+2
})\times\bbM_g\subset\Lambda_{2N}\times\bbM_g.\]

From this point, for each $i\in\{1,2,3\}$ we proceed similarly going from $\cS_{4d+2
}$ towards $\Lambda^i\times\bbM$. For concreteness, consider $i=2$, the others being analogous. Further for concreteness, assume that $z_{4d+2
}\in[N-\sqrt N,2N]\times [-2N-3\sqrt N,2N+3\sqrt N]^{d-1}$ (if this is not the case, up to three additional steps are performed to reach this case, setting $\sigma_1=1$ rather than the sign of the first coordinate of $-z_j$). We perform another 3 steps to the right, obtaining $z_{4d+5
}\in[4N-4\sqrt N,5N+3\sqrt N]\times[-2N-6\sqrt N,2N+6\sqrt N]^{d-1}$. We now consider three cases for the first coordinate $\zeta$ of $z_{4d+5
}$.

If $\zeta\in[9N/2+2\sqrt N,5N-2\sqrt N]$, then an additional step as before yields the desired seed $\cS_{4d+6
}\subset\Lambda^2$ and we are done. Assume that $\zeta\in[4N-4\sqrt N,9N/2+2\sqrt N]$ and perform a step as before to get that the first coordinate of $z_{4d+6
}$ belongs to $[5N-5\sqrt N,11N/2+3\sqrt N]$. At this point, we need more care, since $\Lambda_N(z_{4d+6
})\not\subset\Lambda$. However, we may proceed in the same way as above, setting $R=\Lambda_N(z_{4d+6
})\cap((-\infty,23N/4]\times\bbR^{d-1})\subset \Lambda$ and replacing $z_{4d+6
}+F^{1,\sigma^{4d+6}}_N$ by $F'=R\cap([23N/4-1,23N/4]\times \bbR^{d-1})$. Indeed, for any seed $S\subset \Lambda_{n-K}$ and $x\in[n+1,N]$, we have
\begin{multline*}
P(S\leftrightarrow [x-1,x]\times [-N,N]^{d-1}\times\bbM\text{ in }G((S\cup\xi)\cap(\Lambda_N\times\bbM)))\\
\ge P(S\leftrightarrow F_N\times\bbM\text{ in }G((S\cup\xi)\cap (\Lambda_N\times\bbM)))\ge 1-e^{c/\varepsilon},\end{multline*}
using Lemma~\ref{lemma_crescita}. Noticing that $F'+\Lambda_{\sqrt N}\subset \Lambda^2$, we obtain a seed $\cS_{4d+7
}\subset \Lambda^2$ as desired. Finally, the remaining case $\zeta\in[5N-2\sqrt N,5N+3\sqrt N]$ is treated like $\zeta\in[4N-4\sqrt N,9N/2+2\sqrt N]$ omitting the first step, i.e.~the step used there to suitably localize  the first coordinate of $z_{4d+6
}$.

In total, in at most $4d+26
$ steps, we have obtained that
\[P((0,0)\text{ is occupied})\ge f^{\circ (4d+25
)}(\alpha_1)\,,\]
where $f(x)=\alpha_2x-\varepsilon$ for $x\in\bbR$ and $f^{\circ m}$ denotes the $m$-fold composition of the function $f$. Recalling \eqref{suono1} and \eqref{suono2}, by our choice of scales \eqref{eq:scales}, this completes the proof of Lemma~\ref{passo0}.
\end{proof}

Recall the notation of Definition~\ref{def:occupied}. From now on we choose the seeds $\cS^1,\cS^2,\cS^3$ appearing in Definition~\ref{def:occupied} in a measurable way in terms of  the connected components intersecting $S_0$ in 
    \[G\Big(S_0\cup \big((\xi\cup\xi_{(0,100d]}) \cap(\Lambda\times\bbM)\big)\Big).\]
Linking events are defined very similarly to occupation.
\begin{Definition}
\label{def:link}
    Assume that $(0,0)$ is occupied. 
    Let $\L'=\L\cup \L_{6N}(12N e_1)$ and set 
    \[\Gamma:=G\Big(S_0\cup\big((\xi\cup\xi_{(0,100d]})\cap(\Lambda'\times\bbM)\big)\cup\big(\xi_{(100d,200d]}\cap(\Lambda_{6N}\times\bbM)\big)\Big)\]
    We denote $\Lambda_2^i=12Ne_1+\Lambda^i$ for $i\in\{1,2,3\}$. We say that $(1,0)$ is \emph{linked} to $(0,0)$, if there exists a seed    \begin{align*}\cS^i_2&{}\subset\{t\in\Lambda^i_2\times\bbM_g:t\leftrightarrow\cS^2\text{ in }\Gamma\},
\end{align*}
    for each $i\in\{1,2,3\}$.
\end{Definition}

More generally, we can define the link variables simultaneously with the exploration process of Section~\ref{subsec:exploration}. Whenever a link variable is about to be explored in \eqref{reveal}, we define it as in Definition~\ref{def:link}. The only modifications are that $\Lambda^i$ may need to be rotated depending on the entry vertex of the link and that the ranges of the sprinkling PPP $\xi_{(0,100d]}$ and $\xi_{(100d,200d]}$ may need to be modified. More precisely, assume that we are currently exploring a link to construct the set $C^s_{j+1}$ (recall Section~\ref{subsec:exploration}) with $s\in\{0,\dots,M-1\}$ and $j\in\{2,\dots,\sharp\Lambda'_L-1\}$ (the cases $j=0$ and $j=1$ are exactly given by Definitions~\ref{def:occupied} and~\ref{def:link}, up to translation) and assume that a point $x_{j+1}^s\in\Delta E^s_{j}$ is to be added to $E_{j}^s\cup F_{j}^s$, as otherwise there is nothing to do. For each $x\in \cX:=(E^s_{j}\setminus\{(0,s)\})\cup F^s_j\cup\{x^s_{j+1}\}$, let $i_x\in\{1,2,3,4\}$ be the number of explored links with endpoint $x$, including the link with endpoint $x^s_{j+1}$ currently being explored. Then the vertex set of the graph $\Gamma$ in the analogue of Definition~\ref{def:link} is
\begin{multline*}
\big(S_0+12Nse_2\big)\cup\Big(\big(\xi\cup\xi_{(0,100d]}\big)\cap\big(\Lambda_{6N}(-12Ne_1+12Nse_2)\times\bbM\big)\Big)\\
\Big(\big(\xi\cup\xi_{(0,200d]}\big)\cap\big(\Lambda_{6N}(12Nse_2)\times\bbM\big)\Big)\cup\bigcup_{x\in\cX}\Big(\big(\xi\cup\xi_{(0,100di_x]}\big)\cap\big(\Lambda_{6N}(\bar x)\times\bbM\big)\Big).\end{multline*}

\begin{Lemma}
\label{lem:domination}
    The link and occupation events defined above satisfy that $P$ dominates $p_0$.
\end{Lemma}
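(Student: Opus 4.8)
The plan is to verify the two conditions in Definition~\ref{def:domination}, reducing both to the unconditional estimate of Lemma~\ref{passo0} by exploiting the spatial locality of the occupation and link events together with the disjointness, over distinct box-regions, of the sprinkling PPPs $\xi_1,\dots,\xi_{400d}$.

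\textbf{Starting points.} First I would note that the occupation of $x^s_1=(0,s)$ is, by Definition~\ref{def:occupied} shifted by $12Nse_2$, a measurable function of the deterministic set $S_0+12Nse_2$ and of $\xi,\xi_{(0,100d]}$ restricted to $(\Lambda+12Nse_2)\times\bbM=\bigl(\Lambda_{6N}(-12Ne_1+12Nse_2)\cup\Lambda_{6N}(12Nse_2)\bigr)\times\bbM$. As $s$ varies these regions of $\bbR^d$ are pairwise disjoint up to $\cL$-null sets, so the occupation events of distinct starting points are independent; since the first block of the exploration reveals precisely the occupation statuses of $(0,0),\dots,(0,s-1)$ before that of $(0,s)$ is tested, the relevant conditional probability equals the unconditional one, which exceeds $p_0$ by Lemma~\ref{passo0} and translation invariance.

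\textbf{Link variables.} For the second condition I would fix a step of the exploration at which, in \eqref{reveal}, a link variable is tested while constructing $C^s_{j+1}$ with $j\ge1$. Writing $\cF$ for the $\sigma$-algebra of the accumulated information and $\cS$ for the seed — lying in the suitable rotated copy of $\Lambda^i$ inside the box of $x^s_k$ facing the box of $x^s_{j+1}$ — from which the link starts, the heart of the matter is a Markov-type invariant carried along the exploration: conditionally on $\cF$, (i)~the seed $\cS$ is $\cF$-measurable; (ii)~the sprinkling PPPs in the hundred-block of indices attached to the link $\{x^s_{j+1},x^s_k\}$ near the box of $x^s_{j+1}$, and the analogous block near the starting box, are independent of $\cF$, hence remain PPPs of intensity $\frac{\lambda'-\lambda}{400d}\,\cL\otimes\nu$ — this is precisely what the index ranges $100d\,i_x$ in the generalized link are built to ensure, with $i_x\le 4$ reflecting that a box is consulted by at most its four incident links (plus, for a starting box, one occupation), so that the budget $400d$ is never exhausted and distinct uses employ disjoint ranges; (iii)~the part of $\xi$ inside the box of $x^s_{j+1}$ not revealed by $\cF$ is, conditionally, a PPP of intensity dominated by $\lambda\,\cL\otimes\nu$ — by the elementary observation, implicit in the proof of Lemma~\ref{lemma_udine}, that exposing the connected component of a set $A$ in $G(A\cup\bbX)$ leaves the $\bbX$-points outside it distributed as a PPP with intensity $\bar\varphi(\,\cdot\,,\bar S)$ times the original, $\bar S$ being its vertex set.

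\textbf{The conditional estimate, and the main difficulty.} Granting the invariant, I would rerun the argument of Lemma~\ref{passo0} conditionally on $\cF$: the link event is implied by a chain of $4d+O(1)$ sub-steps, each connecting the current seed to a quarter-face $B$ of the next box and then extracting a new seed there. For one sub-step, conditionally on $\cF$ one applies Lemma~\ref{lemma_udine} with $A$ the vertex set of the $\cF$-measurable cluster of $\cS$, $\tilde\bbX$ the fresh sprinkling PPP on the relevant region $R$, $\bbY$ the conditionally unrevealed part of $\xi$ on $R$, and $\bbX$ an auxiliary $\text{PPP}[\lambda\,\cL\otimes\nu]$ on $R$; since $\{A\not\leftrightarrow B\text{ in }G(\bbX\cup A)\}\subseteq\{\cS\not\leftrightarrow B\text{ in }G(\bbX\cup\cS)\}$, Lemma~\ref{lemma_crescita} — whose symmetry, built in via \eqref{eq:sqrt:trick}, absorbs the rotation of $\Lambda^i$, and which applies since $\cS$ is a genuine seed with marks in $\bbM_g$ — bounds the latter probability by $e^{-c/\varepsilon}$, so with $u:=400d/((\lambda'-\lambda)\sqrt\varepsilon)$ the bound \eqref{fiorellino} is at least $\alpha_2$; the new seed is then produced by Lemma~\ref{lem:seed:to:seed}, which remains valid conditionally on $\cF$ because $\cH_*$ is decreasing and, for the density $\lambda'$, by Remark~\ref{rem:seed:to:seed}. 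Composing $4d+O(1)$ copies of $f(x)=\alpha_2x-\varepsilon$ as at the end of the proof of Lemma~\ref{passo0} yields a conditional link probability at least $f^{\circ(4d+O(1))}(\alpha_1)$, which exceeds $p_0$ by the choice of scales \eqref{eq:scales}. I expect the main obstacle to be making the Markov invariant precise: one has to track, through the somewhat intricate generalized link construction, that $\cF$ consults the relevant sprinkling PPPs only outside the box being entered (including on the faces shared with already-explored neighbouring boxes), and that cluster-exposure in earlier boxes indeed leaves, on each region $R$ where Lemma~\ref{lemma_udine} is invoked, a PPP dominated by $\lambda\,\cL\otimes\nu$; everything else is a conditional replay of Lemma~\ref{passo0}.
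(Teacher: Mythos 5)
Your proposal is correct and follows exactly the route the paper intends: the paper omits the proof, stating it is identical to that of Lemma~\ref{passo0} up to heavier notation, and your argument is precisely that conditional replay — independence of the occupation events via disjoint boxes, and for links the Markov invariant (measurable seed, fresh sprinkling blocks of size $100d$ with $i_x\le 4$ exhausting the budget $400d$, and cluster-exposure leaving a PPP dominated by $\lambda\,\cL\otimes\nu$ as in Lemma~\ref{lemma_udine}) feeding Lemmas~\ref{lemma_udine}, \ref{lemma_crescita} and \ref{lem:seed:to:seed} conditionally. You in fact supply more of the bookkeeping than the paper does.
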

The proof is omitted, as it is identical to the one of Lemma~\ref{passo0}, up to heavier notation.

We are now ready to complete the proof of Theorem~\ref{teo_crossings} by proving \eqref{2d_bound}.
\begin{proof}[Proof of \eqref{2d_bound}]
    By Corollary~\ref{cor:domination} and Lemma~\ref{lem:domination}, for some constants $\k,\k'>0$ and all positive $M,L$, we have $P(N_L\ge \k M)\ge 1-e^{-\k'M}$. By the reverse Fatou's lemma, this gives that 
    \[P(N_L\ge \k M\text{ for infinitely many $L$})\ge 1-e^{-\k'M}.\]
    But the above event implies that there are at least $\k M$ vertex-disjoint paths from occupied vertices in $\{0\}\times\{0,\dots,M-1\}$ to infinity in \[\Lambda_\infty':=\bigcup_{L\ge 1}\Lambda'_L=\{0,\dots,M-1\}^2\cup\{(x,y)\in\bbZ^2:x\ge M\}\] via present links. By Definition~\ref{def:occupied}, each such path starts at (a vertical translate) of the seed $S_0$ which is not necessarily present in $\xi^*$, but which is at distance at least $1$ from $\Lambda_{6N}$. Since all other vertices are in $\xi^*$ and bonds are present in $G(\xi^*)$, we obtain infinite vertex-disjoint paths starting to the left of $\Lambda_{\ell}$. Moreover, since, in $\Lambda_\infty'$, the only way of going to infinity requires intersecting $\{M\}\times\{0,\dots,M-1\}$, the above paths give crossings of $\Lambda_{\ell}$, completing the proof of \eqref{2d_bound}.
\end{proof}

\section*{Acknowledgements}
The authors thank Lorenzo Dello Schiavo, Markus Heydenreich and Mathew Penrose for helpful discussions.

\appendix 

\section{The  effective homogenized matrix \texorpdfstring{$D(\rho)$}{D(rho)}}
\label{app_fine}
Suppose $(\l,\nu,\varphi)$ satisfies Assumption~\ref{assumere} and take $\rho\geq\l$.
In this appendix we explain 
how to fit  the infinite percolation cluster  with unit conductances of our RCM  to the general setting of \cite{Faggionato25}.
To this aim it is necessary to leave the probability space $\O\times [0,1]^J$ with probability $P_{\rho,\nu}\otimes Q$ and to deal with the space $\cN(\bbW)$  endowed with the probability measure $\cP_\rho$ obtained by pushing forward $P_{\rho,\nu}\otimes Q$  by $\Psi$ (recall Section~\ref{sec_FKG}).

To each $\z\in\cN(\bbW)$ we associate a family of edges in $\bbR^d$ denoted by $E(\z)$, given   by the pairs $\{x,y\}$ such that  for some $(\{t,t'\}, u)$ it holds $x=\pi(t)$, $y=\pi(t')$ and $u\leq \varphi(t,t')$.  If the graph with the above edge set $E(\z)$ (and vertices their endpoints) has a unique infinite cluster, we denote by $\cC(\z)$ this cluster, otherwise we set $\cC(\z):=\varnothing
$. Finally, we define $\cE(\z)$ as the family of edges $\{x,y\}\in E(\z)$ with vertices $x,y$ in $\cC(\z)$.

We introduce the simple point process (SPP) $\cN(\bbW)\ni\z\to \hat \z \in \cN(\bbR^d)$ defined as $\hat \z:= \cC(\z)$ and the conductance field  $c(x,y,\z)$ defined as $c(x,y,\z):=\1( \{x,y\}\in E(\z))$. Then the weighted graph in \cite{Faggionato25} associated to the above SPP and conductance field coincides in law to the infinite cluster of the RCM$(\rho,\nu,\varphi)$.

The Abelian group $\bbG=\bbR^d$  acts on the Euclidean space $\bbR^d$
 by the translations $(\t_g)_{g\in\bbG}$, where $\t_g x:=x+g$ for $x\in\bbR^d$.
 $\bbG$ acts on  on $\cN(\bbW)$  by  the action $(\theta_g)_{g\in\bbG}$ where, given $\z\in\cN(\bbW)$, the set $\theta_g \z $ is given by the elements 
$\left(\{(x-g,m),(x'-g,m')\},u\right)$ as $\left(\{( x,m),(x',m')\},u\right)$ varies in $\zeta$. Note that $\cP_\rho$ is stationary and ergodic for the action $(\theta_g)_{g\in\bbG}$. Indeed $\cP_\rho$, is even mixing (the proof of mixing is simplified by Assumption~\ref{assumere}\ref{ass:finite:support}).

At this point, by Assumption~\ref{assumere}, for $\rho\geq \l$ we are in the setting fully satisfying the assumptions of   \cite{Faggionato25}.
In particular, 
the effective homogenized matrix $D(\rho)$ is given by the following definition where 
 $\cP^{(0)}_\rho$ is the Palm distribution associated to $\cP_\rho$ and the SPP $\zeta\mapsto\cC(\zeta)$ (roughly $\cP^{(0)}_\rho:=\cP_\rho(\cdot|0\in\cC(\z))\,$):
\begin{Definition}[Effective homogenized matrix $D(\rho)$] \label{def_D} Suppose that the triple $(\l,\nu,\varphi)$ satisfies Assumptions~\ref{assumere} and take $\rho\geq \l$. 
Then  the \emph{effective homogenized matrix} $D(\rho)$  is defined as the unique symmetric $d\times d$--matrix such that, for each $a\in\bbR^d$,
\be\label{eq_D}
 a\cdot D(\rho)a=\inf_{f\in L^\infty(\cP^{(0)}_\rho)}\frac{1}{2}\int\cP^{(0)}_\rho(d\zeta)\sum_{x\in\cC(\z):\{0,x\}\in \cE(\z)} \left(a\cdot x-\nabla f (\zeta,x)\right)^2\,,
\en
where   $\cP^{(0)}_\rho$ is the Palm distribution associated to $\cP_\rho$ and the SPP $\zeta\mapsto\cC(\zeta)$ (roughly $\cP^{(0)}_\rho:=\cP_\rho(\cdot|0\in\cC(\z))$) and  $ \nabla f (\zeta, x) =f(\theta_x \zeta)-f(\zeta)$.
\end{Definition}
It is simple to check that $D$ is proportional to the identity.
Indeed, by the above definition and Assumption~\ref{assumere}\ref{ass:symmetry}, we get that $D(\rho)$ has to be left invariant by permutations of coordinates and that $D \Phi_i=\Phi_i D$ where $\Phi_i $ is the linear operator on $\bbR^d$ flipping the sign of the $i$-th entry. It is then simple to conclude that $D$ is a scalar matrix.

Due to \eqref{eq_diagonale} any unit vector $e$ is an eigenvector of $D(\rho)$. In particular, Proposition~\ref{prop_scaling} is now an immediate consequence of \cite{Faggionato25}*{Corollary~2.7}.

\bibliographystyle{plain}
\bibliography{Bib}

\end{document}